\theoremstyle{plain}
\newtheorem{The}{Theorem}%[section]
\newtheorem*{The*}{Theorem}
\newtheorem{Pro}{Proposition}
\newtheorem{Lem}{Lemma}
\newtheorem{Cor}{Corollary}
\newtheorem*{Cor*}{Corollary}
\theoremstyle{definition}
\newtheorem*{Def}{Definition}
\newtheorem{Rem}{Remark}
\newtheorem{Exa}{Example}
\newtheorem*{Rem*}{Remark}
\newtheorem*{Con*}{Convention}
\numberwithin{equation}{section}
\def\Res{{\,\rm Res}}
\DeclareMathOperator{\tr}{tr}
\DeclareMathOperator{\Hol}{Hol}
\DeclareMathOperator{\Id}{Id}
\renewcommand{\sl}{\mathfrak{sl}}
\newcommand{\pd}[2]{\frac{\partial #1}{\partial #2}}
\renewcommand{\Im}{\operatorname{Im}}
\newcommand{\R}{\mathbb{R}}
\newcommand{\C}{\mathbb{C}}
\newcommand{\N}{\mathbb{N}}
\newcommand{\Z}{\mathbb{Z}}
\renewcommand{\S}{\mathbb{S}}
\newcommand{\CP}{\mathbb{CP}}
\newcommand{\wt}{\widetilde}
\newcommand{\wh}{\widehat}
\newcommand{\WWW}{\mu(\varphi)}
\newcommand{\TTT}{\nu(\varphi)}
\DeclareFontFamily{U}{mathx}{\hyphenchar\font45}
\DeclareFontShape{U}{mathx}{m}{n}{
      <5> <6> <7> <8> <9> <10>
      <10.95> <12> <14.4> <17.28> <20.74> <24.88>
      mathx10
      }{}
\DeclareSymbolFont{mathx}{U}{mathx}{m}{n}
\DeclareMathAccent{\widecheck}{0}{mathx}{"71}
\DeclareMathAccent{\widetilde}{0}{mathx}{"72}
\DeclareMathAccent{\widebar}{0}{mathx}{"73}
\DeclareMathAccent{\widevec}{0}{mathx}{"74}
\DeclareMathAccent{\widehat}{0}{mathx}{"70}
\DeclareMathAccent{\widefrown}{0}{mathx}{"75}
\DeclareMathAccent{\chinesehat}{0}{mathx}{"69}
\newcommand{\wc}[1]{\widecheck{#1}}
\newcommand{\wf}[1]{\widefrown{#1}}
\newcommand{\PhiDPW}{\mathsf \Phi}
\begin{document}

\title[Chern-Simons gauge theory and the enclosed volume of CMC surfaces in the 3-sphere]{Application of Chern-Simons gauge theory to the enclosed volume of constant mean curvature surfaces in the 3-sphere}

\author{Lynn Heller}

 \author{Sebastian Heller}

\author{Martin Traizet}

\address{Lynn Heller \\BIMSA\\China
 }
 \email{lynn@bimsa.cn}

\address{Sebastian Heller \\BIMSA\\China
 }
 \email{sheller@bimsa.cn}

\address{Martin Traizet \\Institut Denis Poisson, Universit\'e de Tours\\France
 }
 \email{martin.traizet@univ-tours.fr}

%\subjclass{Primary 53A05, 53 A 30, 53C42; Secondary 37K15}

\subjclass[2010]{53A10, 53C42, 53C43}

%\date{\today}

\thanks{}

\begin{abstract}
Building on Hitchin's work of the Wess-Zumino-Witten term for harmonic maps into Lie groups,
we derive a formula for the enclosed volume of a compact CMC  
surface $f$ in $\mathbb S^3$ in terms of a holonomy on the Chern-Simons bundle and the Willmore functional. By construction the  enclosed volume only depends on the gauge classes of the associated family of flat connections of $f$. In this paper we show in various examples the effectiveness of this formula, in particular for surfaces of genus $g\geq2.$
\end{abstract}
  
 \maketitle

%%%%%%%%%%%%%%%%%%%%%%%%%%%%%%%%%%%%%%%%%%%%%%%%%%%%%%%%%%%%%%%%%%%%%%%%%%%%%%
%           Introduction                                                     %
%%%%%%%%%%%%%%%%%%%%%%%%%%%%%%%%%%%%%%%%%%%%%%%%%%%%%%%%%%%%%%%%%%%%%%%%%%%%%%

\section{Introduction}

The study of constant mean curvature (CMC) surfaces -- critical points for area under fixed enclosed volume constraint -- lies at the intersection of differential geometry, geometric analysis, integrable systems, and mathematical physics.  They arise naturally in isoperimetric problems. 
This motivates the construction and explicit computation of area and enclosed volume of such surfaces, in particular, if the underlying surfaces has non-trivial topology. In space forms, the construction of CMC surfaces are governed by integrable PDEs, such as the {\em sinh}-Gordon equation. This integrability can be encoded in terms of flatness of an associated family of connections $\nabla^\lambda$ parametrized by a spectral parameter
 $\lambda\in\mathbb C^*,$  and solutions admit many conserved quantities.

It is well known that the area, or more precisely the Willmore energy, is such a conserved quantity. It can be computed from the gauge class of $\nabla^\lambda$ only, which we view as a complex curve into the moduli space of flat connections satisfying a reality condition, see \cite{Hi} for the case of tori and 
 \cite[Theorem 8]{He3}, \cite[Corollary 18]{HHT1} for higher genus surfaces. 

The computation of the enclosed volume is less known. In \cite{Hi2} Hitchin showed that the enclosed volume and energy of minimal tori in $\mathbb S^3$ are related by the holonomy of the natural connection on the Chern-Simons bundle over the moduli space of flat unitary connections along the curve given by the gauge classes of $\nabla^\lambda$. Thus also the enclosed volume is a conserved quantity which can be computed from $[\nabla^\lambda]$ only. The reason this has not been applied to higher genus surfaces was that there has been no effective construction of the associated family when the fundamental group of the surface is non-abelian.

In this paper we first generalize Hitchin's work slightly to compact CMC surfaces in the 3-sphere. The main result is then that the formula can in fact be applied to explicitly compute enclosed areas of higher genus CMC surfaces in the 3-sphere. In other words, we show that the holonomy of the Chern-Simons connection can be independently computed.

The paper is organized as follows: section \ref{sec:csss} reviews the Chern-Simons gauge theory, making the exposition self-contained for readers working on surface theory. Section \ref{sec:evs3}
%and Section \ref{sec:lcmcs3} 
examines the enclosed volume of CMC surfaces in $\mathbb S^3$,
and its relation to the Willmore energy, and the holonomy of a natural connection on the Chern-Simons bundle. Moreover, we give first examples where the holonomy and thus the enclosed volume can be computed. 
Section \ref{sec:dpw} revisits the integrable surface theory for CMC surfaces,  and derives the holonomy of the Chern-Simons connection in terms of the residues of a Fuchsian DPW potential $\eta^\lambda$, i.e, a particular lift of $[\nabla^\lambda]$ to the space of flat connections with trivial holomorphic structure and pole-like singularities in $\Sigma$. 
Finally,
 we compute in section \ref{section:Lawson} the Taylor expansion for the enclosed volume of CMC deformations of the Lawson minimal surfaces $\xi_{g,1}$ 
(Theorem \ref{theorem:volume-order2}) at $g= \infty$ up to second order. In fact we give two ways of computing it. One using the desingularizing gauges for general Fuchsian DPW potentials introduced in section \ref{sec:dpw}, one more conceptional but restrictive by using Darboux coordinates of the moduli space of Fuchsian systems on the  4-punctured sphere. The formulas obtained from the two approaches look different, but of course they yield the same result at the end.
%In a forthcoming paper, we generalize these results to CMC surfaces in $\mathbb R^3/\Gamma$, where $\Gamma$ is some lattice, extending Minkowski's classical formula \cite{Min}.

\section*{Acknowledgements}
LH and SH are supported by the Beijing Natural Science Foundation IS23002 (LH) and IS23003 (SH).\\
MT is supported by the French ANR project Min-Max (ANR-19-CE40-0014).\\

\section{The Chern-Simons line bundle and its connection}\label{sec:csss}
Chern-Simons theory 
was introduced in \cite{Chern-Simons} and later taken up as a tool in quantum field theories \cite{Witten89}, see also \cite{Freed} and references therein.
The aim of this preliminary section is to introduce the Chern-Simons line bundle over the moduli space of flat connections on
a compact Riemann surface $\Sigma$ 
 following \cite{RSW} .
For more details  see also \cite{Freed,Hi2}.

To keep the presentation as concrete as possible, we restrict to
the case of flat $\mathrm{G}$-connections $\nabla$, where $\mathrm{G}$ is either $\mathrm{SU}(2)$ or $\mathrm{SL}(2,\C)$. Here, a $\mathrm{G}$-connection is given by $\nabla=d+A$ on the trivial $\C^2$-bundle over $\Sigma$,
where $A\in\Omega^1(\Sigma,\mathfrak g)$ is the connection 1-form with values in the Lie algebra $\mathfrak g$ of the Lie group $\mathrm{G}.$ The moduli space $\mathcal M_\mathrm{G}$ of flat $\mathrm{G}$-connections is obtained by
quotient of the space of flat $\mathrm{G}$-connections modulo gauge transformations.
It turns out that $\mathcal M(\Sigma)=\mathcal M_{\mathrm{SU}(2)}$  is a compact Hausdorff space with a smooth manifold structure
at the locus of gauge classes of irreducible connections.
For details about moduli spaces, we refer to \cite{AB}. 

\subsection{The Chern-Simons line bundle}

Consider the Maurer-Cartan form
$\omega=g^{-1}dg\in\Omega^1(\mathrm{SL}(2,\C),\mathfrak{sl}(2,\C))$
of  $\mathrm{SL}(2,\C)$.

\begin{Pro}\label{Pro:vol-int}
Let $M$ be a closed 3-manifold, and $f\colon M\to \mathrm{SL}(2,\C).$ Then
\[\frac{1}{12\pi}\int_M\tr(f^*\omega\wedge f^*\omega\wedge f^*\omega)\in2\pi\Z.\]
\end{Pro}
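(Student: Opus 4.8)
The plan is to recognize the integrand as (a multiple of) the bi-invariant generator of the third cohomology of $\mathrm{G}=\mathrm{SL}(2,\C)$ and to reduce the statement to a degree (homology) count plus one explicit period computation. Write $\alpha=\tr(\omega\wedge\omega\wedge\omega)$. First I would check that $\alpha$ is closed: differentiating and inserting the Maurer--Cartan equation $d\omega=-\omega\wedge\omega$ gives $d\alpha=-\tr(\omega\wedge\omega\wedge\omega\wedge\omega)$, and graded cyclicity of the trace forces $\tr(\omega\wedge\omega\wedge\omega\wedge\omega)=0$ (moving the first $1$-form past the other three produces the sign $(-1)^{3}=-1$). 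Hence $d\alpha=0$, so $[\alpha]\in H^3(\mathrm{G};\C)$, and for any smooth $f\colon M\to\mathrm{G}$ with $M$ a closed oriented $3$-manifold the de Rham pairing is natural:
\[
\int_M f^*\alpha=\langle f^*[\alpha],[M]\rangle=\langle[\alpha],f_*[M]\rangle .
\]
In particular the integral depends only on the class $f_*[M]\in H_3(\mathrm{G};\Z)$.

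Next I would identify $H_3(\mathrm{G};\Z)$. By the polar decomposition $\mathrm{SL}(2,\C)=\mathrm{SU}(2)\cdot\exp(i\,\su(2))$ the group $\mathrm{G}$ deformation retracts onto its maximal compact subgroup $\mathrm{SU}(2)\cong S^3$, so $H_3(\mathrm{G};\Z)\cong\Z$, generated by the fundamental class $[\mathrm{SU}(2)]$. Thus $f_*[M]=n\,[\mathrm{SU}(2)]$ for some $n\in\Z$ and $\int_M f^*\alpha=n\int_{\mathrm{SU}(2)}\alpha$. It therefore remains only to evaluate the single period $\int_{\mathrm{SU}(2)}\alpha$, and to observe that it is real: along $\mathrm{SU}(2)$ the form $\omega$ takes values in $\su(2)$, on which $\alpha$ is a real bi-invariant $3$-form. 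This is precisely what makes the a priori complex-valued left-hand side land in $\R$.

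The main work is this period computation, which I expect to be the only genuinely quantitative step. Evaluating $\alpha$ on a left-invariant frame (identifying tangent vectors with their images under $\omega$) gives $\alpha(X,Y,Z)=3\,\tr(X[Y,Z])$. Taking the basis $T_a=-\tfrac{i}{2}\sigma_a$ of $\su(2)$, for which $[T_a,T_b]=\varepsilon_{abc}T_c$ and $\tr(T_aT_b)=-\tfrac12\delta_{ab}$, one finds $\alpha(T_1,T_2,T_3)=3\tr(T_1[T_2,T_3])=-\tfrac32$, so $\alpha=-\tfrac32\,\mu$, where $\mu$ is the bi-invariant volume form with orthonormal frame $\{T_a\}$. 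This frame makes $\mathrm{SU}(2)$ the round $3$-sphere of radius $2$ (the geodesic $\exp(tT_3)$ closes up at $t=4\pi$), whence $\int_{\mathrm{SU}(2)}\mu=\vol(S^3_2)=16\pi^2$ and $\int_{\mathrm{SU}(2)}\alpha=\pm 24\pi^2$, the sign reflecting the orientation convention. Combining with the previous step,
\[
\frac{1}{12\pi}\int_M f^*\alpha=\frac{1}{12\pi}\cdot n\cdot(\pm 24\pi^2)=\pm 2\pi n\in2\pi\Z .
\]
The delicate points are purely bookkeeping: fixing orientations so the period is $+24\pi^2$, and keeping consistent the normalizations of the trace, of $\{T_a\}$, and of $\vol(\mathrm{SU}(2))$, which together pin down the universal constant $24\pi^2$ (equivalently, that $\tfrac{1}{24\pi^2}\alpha$ is the integral generator of $H^3(\mathrm{SU}(2);\Z)$).
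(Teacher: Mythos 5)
Your proof is correct and takes essentially the same route as the paper's: reduce to the maximal compact $\mathrm{SU}(2)$ (you via closedness of $\alpha$ and $H_3(\mathrm{SL}(2,\C);\Z)\cong\Z$, the paper via an explicit homotopy-invariance computation plus QR decomposition) and then evaluate the single period over $\mathrm{SU}(2)$. Your value $\int_{\mathrm{SU}(2)}\tr(\omega\wedge\omega\wedge\omega)=\pm24\pi^2$ agrees with the paper's identification of $-\tfrac{1}{12}\tr(\omega\wedge\omega\wedge\omega)$ with the curvature-one volume form of total volume $2\pi^2$, so the normalizations are consistent and the conclusion follows.
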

\begin{proof}
Via QR decomposition, $f$ is homotopic to some $\wt f\colon M\to\mathrm{SU}(2).$
Let $f_t$ be any deformation of $f$. Let $X=f_t^{-1}\dot f_t$ where the dot denotes derivative with respect to $t$. Then
$dX=-f_t^*\omega X+f_t^{-1}d\dot f_t.$
Recall that 
$\tr(\alpha\wedge\beta)=(-1)^{k\ell}\tr(\beta\wedge\alpha)$ if $\alpha$, $\beta$ are matrix-valued differential forms of degree $k$, $\ell$.
We compute
\begin{equation*}
\begin{split}
\frac{d}{dt}\int_M\tr\left(f_t^*\omega\wedge f_t^*\omega\wedge f_t^*\omega\right)&=
3\int_M\tr\left(\frac{d}{dt}(f_t^{-1}df_t)\wedge f_t^{-1}df_t\wedge f_t^{-1}df_t\right)\\
&=3\int_M\tr\left((-X f_t^*\omega+f_t^{-1}d\dot f_t)\wedge f_t^*\omega\wedge f_t^*\omega\right)\\
&=3\int_M\tr\left((-f_t^*\omega X +f_t^{-1}d\dot f_t)\wedge f_t^*\omega\wedge f_t^*\omega\right)=3\int_Md\tr(X f_t^*\omega\wedge f_t^*\omega)=0\\
\end{split}
\end{equation*}
by using $\omega\wedge\omega=-d\omega$  and Stokes Theorem.
Hence the integral is constant along smooth deformations, and it suffices to prove Proposition \ref{Pro:vol-int}
for $f\colon M\to\mathrm{SU}(2)$.
Restrict $\omega$ to be the Maurer-Cartan form of $\mathrm{SU}(2)$.
A direct computation shows that 
$-\frac{1}{12}\tr(\omega\wedge\omega\wedge\omega)$
is the volume form of $\mathrm{SU}(2)$ with its round metric of curvature 1 and total volume $2\pi^2.$ Hence,
\[\frac{1}{12\pi}\int_M\tr(f^*\omega\wedge f^*\omega\wedge f^*\omega)=-2\pi\deg(f)\in2\pi\Z.\]
\end{proof}

Given an oriented 3-manifold $M$ and a $\mathrm{SL}(2,\C)$-connection $d+\hat A$ over $M$ consider 
the Chern-Simons functional
\[\mathrm{\mathrm{CS}}(d+\hat A):=\mathrm{\mathrm{CS}}_M(d+\hat A):=\frac{1}{4\pi}\int_M\tr(\hat A\wedge d\hat A+\tfrac{2}{3}\hat A\wedge \hat A\wedge \hat A).\]
Let $\Sigma$ be a compact oriented surface.
We define (the cocyle) $\Theta$ for a $\mathrm{SL}(2,\C)$-connection $d+A$ and a gauge transformation $g\colon\Sigma\to\mathrm{SL}(2,\C)$ on the Riemann surface  $\Sigma$ by
\[\Theta(d+A,g):=\exp\left (i \,\mathrm{CS}_B((d+\hat A).\hat g)-i\,\mathrm{CS}_B(d+\hat A)\right),\]
where  $B$ is any compact oriented 3-manifold with oriented boundary $\partial B=\Sigma$
and $(\hat A,\hat g)$
are arbitrary (smooth) extensions of $(A,g)$ on $\Sigma=\partial B$ to $B.$ Because  $d+\hat A$ is not assumed to be flat on $B$, such
extensions  always exist.
We will also write 
\[\Theta(d+A,g)=\Theta(A,g)\]
for short.
Moreover, we will denote the Chern-Simons integrant by
\[cs(d+\hat A)=\tr\left(\hat A\wedge d\hat A+\tfrac{2}{3}\hat A\wedge \hat A\wedge \hat A\right),\]
such that
\[\Theta(d+A,g)=\exp\left ({\frac{i}{4\pi}} \int_B{cs}((d+\hat A).\hat g)-\frac{i}{ 4\pi}\int_B{cs}(d+\hat A)\right)\]
for some extensions $(\hat A,\hat g)$ of $(A, g)$ to $B$ as above.
The following proposition is classical, see \cite{RSW,Freed,Hi2}.
\begin{Pro}\label{pro:chs} Let $M$ be a  3-manifold, and $A$ be a 
$\mathrm{SL}(2,\C)$-connection on $M$.
Then we have for every gauge transformation $g\colon M\to \mathrm{SL}(2,\C)$
that 
\begin{equation}\label{eq:CSdidd}cs((d+A).g)-cs(d+A)=-\tfrac{1}{3}\tr(g^*\omega\wedge g^*\omega\wedge g^*\omega)+d\tr(g^{-1}Ag\wedge g^*\omega).\end{equation}
 In particular, if $M$ is closed without boundary we have (by Stokes Theorem and Proposition \ref{Pro:vol-int})
 \[\mathrm{CS}((d+A).g)-\mathrm{CS}(d+A)\in 2\pi \Z.\] 
\end{Pro}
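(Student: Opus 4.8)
The plan is to prove \eqref{eq:CSdidd} by a direct computation of matrix-valued forms and then deduce the integrality statement by integrating and invoking Proposition \ref{Pro:vol-int}. First I would unravel the gauge action, writing the gauge-transformed connection form as $A^g=g^{-1}Ag+\theta$ with $\theta:=g^*\omega=g^{-1}dg$, so that $cs((d+A).g)=cs(d+A^g)$. Since the identity is local and the Chern--Simons integrand is algebraic in $A$ and $dA$, it suffices to manipulate forms on $M$ using only two inputs already available: the Maurer--Cartan equation $d\theta=-\theta\wedge\theta$ (this is the identity $\omega\wedge\omega=-d\omega$ recorded above, pulled back by $g$), and the graded cyclicity of the trace $\tr(\alpha\wedge\beta)=(-1)^{k\ell}\tr(\beta\wedge\alpha)$.

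A convenient preliminary step is to compute $d(g^{-1}Ag)$ without differentiating $g$ directly: comparing the gauge covariance of the curvature, $F_{d+A^g}=g^{-1}F_{d+A}g$, with the expansion $F_{d+A^g}=dA^g+A^g\wedge A^g$ and using $d\theta=-\theta\wedge\theta$ yields $d(g^{-1}Ag)=g^{-1}(dA)g-(g^{-1}Ag)\wedge\theta-\theta\wedge(g^{-1}Ag)$. With this in hand I would substitute $A^g=g^{-1}Ag+\theta$ into $cs(d+A^g)=\tr\!\big(A^g\wedge dA^g+\tfrac23 A^g\wedge A^g\wedge A^g\big)$ and expand, grading the terms by how many factors of $\theta$ they carry. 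The $\theta$-free terms recombine, via ordinary conjugation-invariance $\tr(g^{-1}Mg)=\tr(M)$, into $\tr(A\wedge dA+\tfrac23 A\wedge A\wedge A)=cs(d+A)$; the purely-$\theta$ terms give, using $d\theta=-\theta\wedge\theta$, exactly $cs(\theta)=-\tfrac13\tr(\theta\wedge\theta\wedge\theta)$, which is the first term on the right of \eqref{eq:CSdidd}; and there remain mixed terms carrying one or two factors of $\theta$.

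The crux is to show that these leftover mixed terms assemble into the exact form $d\,\tr(g^{-1}Ag\wedge\theta)$. I would verify this by independently computing $d\,\tr(g^{-1}Ag\wedge\theta)=\tr\big(d(g^{-1}Ag)\wedge\theta\big)-\tr\big(g^{-1}Ag\wedge d\theta\big)$, inserting the formula for $d(g^{-1}Ag)$ above and $d\theta=-\theta\wedge\theta$, and collapsing the resulting cubic traces by cyclicity. The graded sign rule is needed precisely to identify $\tr(g^{-1}(dA)g\wedge\theta)=\tr(\theta\wedge g^{-1}(dA)g)$ (a $2$-form against a $1$-form, so the sign is $+1$); matching the outcome term by term with the residual mixed contributions establishes \eqref{eq:CSdidd}. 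I expect the sign bookkeeping across the many cubic wedge terms to be the only genuine obstacle: there is no conceptual difficulty, but one must be disciplined with the graded-commutativity signs and with the fact that $d$ does \emph{not} commute with conjugation by $g$, which is exactly what produces the extra Maurer--Cartan and exact terms.

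Finally, for the closed case I would integrate \eqref{eq:CSdidd} over $M$: the exact term drops by Stokes' theorem, leaving $\mathrm{CS}((d+A).g)-\mathrm{CS}(d+A)=-\tfrac{1}{12\pi}\int_M\tr(\theta\wedge\theta\wedge\theta)$, which lies in $2\pi\Z$ by Proposition \ref{Pro:vol-int} applied to $g\colon M\to\mathrm{SL}(2,\C)$.
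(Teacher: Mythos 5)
Your proposal is correct and follows essentially the same route as the paper: both expand $cs(d+A^g)$ directly with $A^g=g^{-1}Ag+\theta$, use the Maurer--Cartan equation and graded cyclicity of the trace to collect terms by $\theta$-degree, identify the pure-$\theta$ contribution as $-\tfrac13\tr(\theta\wedge\theta\wedge\theta)$, and match the surviving mixed terms against $d\,\tr(g^{-1}Ag\wedge\theta)$. The only cosmetic difference is that you obtain $d(g^{-1}Ag)$ from curvature covariance rather than by differentiating the product directly; the resulting formula and the rest of the argument coincide with the paper's.
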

\begin{proof}
Let
$\wt A:=g^{-1}Ag+g^{-1}dg$
and $\theta:=g^*\omega=g^{-1}dg$. Then we compute
\[\begin{split}
cs(d + \wt A)=&\tr\left( \wt A\wedge  d\wt A+\tfrac{2}{3} \wt A\wedge \wt A\wedge \wt A\right)\\
=&\tr\left((g^{-1}Ag+\theta)\wedge (-\theta\wedge g^{-1}Ag+g^{-1}dA\,g - g^{-1}Ag\wedge\theta-\theta\wedge\theta)\right)\\
&+\tfrac{2}{3}\tr\left((g^{-1}Ag+\theta)\wedge (g^{-1}Ag+\theta)\wedge (g^{-1}Ag+\theta)\right)\\
=&\tfrac{2}{3}\tr\left(g^{-1}Ag\wedge g^{-1}Ag\wedge g^{-1}Ag\right)
+\tr\left(g^{-1}Ag\wedge g^{-1}Ag\wedge\theta\right)\left(-1-1+\tfrac{2}{3}+\tfrac{2}{3}+\tfrac{2}{3}\right)\\
&+\tr\left(g^{-1}Ag\wedge\theta\wedge\theta\right)\left(-1-1-1+\tfrac{2}{3}+\tfrac{2}{3}+\tfrac{2}{3}\right)
+\tr\left(\theta\wedge\theta\wedge\theta\right)\left(-1+\tfrac{2}{3}\right)\\
&+\tr\left(g^{-1}Ag\wedge g^{-1}dA\,g\right)
+\tr\left(\theta\wedge g^{-1} dA\,g\right)\\
=&\tr(A\wedge dA )+\tfrac{2}{3}\tr(A\wedge A\wedge A)-\tfrac{1}{3}\tr(\theta\wedge \theta\wedge \theta)-\tr(g^{-1}Ag\wedge \theta\wedge\theta)+\tr(\theta\wedge g^{-1}dA g)
\end{split}\]
Now using $d\theta+\theta\wedge\theta=0$, we have
\[d\tr\left(g^{-1}Ag\wedge\theta\right)=-\tr\left(g^{-1}Ag\wedge\theta\wedge\theta\right)+\tr\left(g^{-1}dA\, g\wedge\theta\right),\]
from which Proposition \ref{pro:chs} follows.
\end{proof}

%As a direct corollary, we then have:
\begin{Cor}
Let $\Sigma$ be a closed oriented surface. Moreover, let $A$ be a $\mathrm{SL}(2,\C)$ connection over $\Sigma$ and $g\colon \Sigma\to\mathrm{SL}(2,\C)$ a gauge transformation.
Then, the cocycle $\Theta(A,g)$ does neither depend on the choice of $B$ with $\partial B=\Sigma$  nor on the extension $(\hat A,\hat g)$ of
$(A,g)$ on $\Sigma=\partial B$ to $B$.
\end{Cor}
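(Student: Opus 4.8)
The plan is to compare two arbitrary admissible choices simultaneously, reducing both independence statements to the integrality results of Propositions~\ref{Pro:vol-int} and~\ref{pro:chs}. Let $(B_1,\hat A_1,\hat g_1)$ and $(B_2,\hat A_2,\hat g_2)$ be two bounding $3$-manifolds with extensions of $(A,g)$, so that $\partial B_1=\partial B_2=\Sigma$ and all four fields restrict to $(A,g)$ on $\Sigma$. Setting $\phi_i:=\mathrm{CS}_{B_i}((d+\hat A_i).\hat g_i)-\mathrm{CS}_{B_i}(d+\hat A_i)$, so that the corresponding value of the cocycle is $\Theta_i=\exp(i\phi_i)$, it suffices to show $\phi_1-\phi_2\in 2\pi\Z$.

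I would first apply the pointwise identity~\eqref{eq:CSdidd} on each $B_i$ and integrate. Since the term $d\tr(\hat g_i^{-1}\hat A_i\hat g_i\wedge \hat g_i^*\omega)$ is exact, Stokes' theorem converts it into a surface integral over $\partial B_i=\Sigma$, giving
\[\phi_i=-\tfrac{1}{12\pi}\int_{B_i}\tr(\hat g_i^*\omega\wedge \hat g_i^*\omega\wedge \hat g_i^*\omega)+\tfrac{1}{4\pi}\int_{\Sigma}\tr(\hat g_i^{-1}\hat A_i\hat g_i\wedge \hat g_i^*\omega).\]
The crucial observation is that the two boundary contributions coincide: because $\hat A_i|_\Sigma=A$ and $\hat g_i|_\Sigma=g$, the restrictions to $\Sigma$ of $\hat A_i$ and of $\hat g_i^*\omega=\hat g_i^{-1}d\hat g_i$ involve only $(A,g)$ and their tangential derivatives, which are the same for $i=1,2$. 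Hence these terms cancel in $\phi_1-\phi_2$.

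Next I would glue the two pieces along $\Sigma$ with reversed orientation on the second, forming the closed oriented $3$-manifold $M=B_1\cup_\Sigma(-B_2)$; as $\hat g_1$ and $\hat g_2$ agree on $\Sigma$, they assemble into a single map $\hat g\colon M\to \mathrm{SL}(2,\C)$. By additivity of the integral over this decomposition, with the orientation reversal producing the relative sign, the remaining volume terms combine to
\[\phi_1-\phi_2=-\tfrac{1}{12\pi}\int_{M}\tr(\hat g^*\omega\wedge \hat g^*\omega\wedge \hat g^*\omega),\]
which lies in $2\pi\Z$ by Proposition~\ref{Pro:vol-int}. Therefore $\Theta_1=\Theta_2$, proving the corollary.

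The main obstacle is a regularity issue rather than a conceptual one: the glued map $\hat g$ is a priori only continuous across $\Sigma$, whereas Proposition~\ref{Pro:vol-int} is phrased for smooth maps on a closed manifold. I would resolve this by smoothing $\hat g$ within a collar $\Sigma\times[0,\varepsilon)$ while fixing its values on $\Sigma$; since the integral equals $-2\pi\deg(\hat g)$ and the degree is a homotopy invariant of continuous maps, the relevant integer is unaffected. Alternatively, one may take all extensions to be of product form on a collar of $\Sigma$, which makes $M$ and $\hat g$ manifestly smooth from the start.
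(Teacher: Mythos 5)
Your proof is correct and follows essentially the route the paper intends: the corollary is stated as an immediate consequence of Proposition \ref{pro:chs} (the identity \eqref{eq:CSdidd} plus Stokes, isolating a boundary term depending only on $(A,g)$) and Proposition \ref{Pro:vol-int} applied to the closed manifold obtained by gluing two bounding $3$-manifolds along $\Sigma$. Your explicit treatment of the smoothness of the glued map across the collar is a reasonable addition that the paper leaves implicit.
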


Let  $\mathcal A_f$ denote the (infinite dimensional) space of flat $\mathrm{SL}(2,\C)$ connections on the trivial $\C^2$ bundle over the compact oriented surface $\Sigma$, and $\mathcal A_f^u$ denote the subspace of flat $\mathrm{SU}(2)$ connections (with respect to the standard hermitian metric).
Note that there exist some hermitian metric for which a flat connection $\wt \nabla$ is unitary if and only if it is of the form $\wt \nabla = \nabla.g$ for some $\nabla\in \mathcal A_f^u$ and
some complex gauge transformation $g\colon \Sigma\to\mathrm{SL}(2,\C).$ 
We call such flat connections $\wt\nabla$ unitarizable and denote the space of flat unitarizable connections by $\mathcal A_f^{ua}$.
Denote the group of all complex gauge transformations by
$\mathcal G^\C,$ and consider the space $\mathcal A_f^{ua}$ as a principal $\mathcal G^\C$ bundle over the moduli space $\mathcal M(\Sigma) = \mathcal A_f^u/\mathcal G^\C$ 
of flat $\mathrm{SU}(2)$-connections.
The moduli space $\mathcal M(\Sigma) $  is Hausdorff, and smooth away from reducible gauge classes.

\begin{Pro}[\cite{RSW}]\label{pro:csbundle}
On
$\mathcal A_f^{ua}\times \C$ define an equivalence relation by
\[ (d+A,\mu)\sim ((d+A).g,\Theta(d+A,g)\mu)\]
with $g\in\mathcal G^\C$. The corresponding complex line bundle
\[\mathcal L=(\mathcal A_f^{ua}\times \C)/\mathcal G^\C\to\mathcal M(\Sigma)=\mathcal A_f^{ua}/\mathcal G^\C\]
is called the Chern-Simons bundle and $\mathcal L$ inherits a hermitian metric.
\end{Pro}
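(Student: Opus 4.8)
The plan is to establish three things: that $\sim$ is genuinely an equivalence relation, equivalently that $\Theta$ is a multiplicative cocycle for the $\mathcal G^\C$-action; that the quotient is locally trivial and hence a smooth complex line bundle over the smooth locus of $\mathcal M(\Sigma)$; and finally that a hermitian metric descends to $\mathcal L$ from the standard metric on the $\C$-factor along the unitary locus. I would begin with the cocycle property, checking the identity
\[\Theta(d+A,gh)=\Theta((d+A).g,h)\,\Theta(d+A,g),\qquad g,h\in\mathcal G^\C,\]
together with the normalization $\Theta(d+A,\mathrm{id})=1$. Both are immediate from the definition once one invokes the preceding Corollary, which guarantees that $\Theta$ does not depend on the filling $3$-manifold $B$ nor on the chosen extensions. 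Fixing one $B$ with $\partial B=\Sigma$ and extensions $\hat A,\hat g,\hat h$, and using $\hat g\hat h$ as extension of $gh$, one writes the exponent of $\Theta(d+A,gh)$ as the telescoping sum
\begin{equation*}
\begin{split}
\mathrm{CS}_B((d+\hat A).\hat g\hat h)-\mathrm{CS}_B(d+\hat A)
&=\bigl(\mathrm{CS}_B((d+\hat A).\hat g\hat h)-\mathrm{CS}_B((d+\hat A).\hat g)\bigr)\\
&\quad+\bigl(\mathrm{CS}_B((d+\hat A).\hat g)-\mathrm{CS}_B(d+\hat A)\bigr),
\end{split}
\end{equation*}
using $((d+\hat A).\hat g).\hat h=(d+\hat A).\hat g\hat h$. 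The first bracket is the exponent of $\Theta((d+A).g,h)$ (taking $(d+\hat A).\hat g$ as the extension of $(d+A).g$), the second that of $\Theta(d+A,g)$, which gives the cocycle identity. This is exactly transitivity of $\sim$; reflexivity follows from the normalization and symmetry from $\Theta((d+A).g,g^{-1})=\Theta(d+A,g)^{-1}$, the special case $h=g^{-1}$.

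\textbf{Local triviality.} Next I would trivialize $\mathcal L$ over the smooth part of $\mathcal M(\Sigma)$. Away from the reducible classes the $\mathcal G^\C$-action on $\mathcal A_f^{ua}$ admits local slices, so over a neighbourhood $U$ of an irreducible class there is a smooth section $s\colon U\to\mathcal A_f^{ua}$, and $(x,\mu)\mapsto[(s(x),\mu)]$ furnishes a bijection $U\times\C\to\mathcal L|_U$. For two such sections related by $s_1=s_2.g$ with $g\colon U\to\mathcal G^\C$, the transition function is $x\mapsto\Theta(s_2(x),g(x))$, which is smooth by smooth dependence of the Chern-Simons functional on its data; the cocycle identity just established makes these transition functions compatible, so $\mathcal L$ is a well-defined smooth complex line bundle.

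\textbf{Hermitian metric.} Finally I would build the metric from the unitary locus. Each $\mathcal G^\C$-orbit in $\mathcal A_f^{ua}$ contains a unitary representative $d+A_u$, unique up to the residual group $\mathcal G$ of $\mathrm{SU}(2)$-gauge transformations; writing a class as $[(d+A_u,\mu)]$ I would set $\|[(d+A_u,\mu)]\|^2=|\mu|^2$. Well-definedness reduces to $|\Theta(d+A_u,g)|=1$ for $g\in\mathcal G$ and $d+A_u$ unitary. Here I would use extension-independence once more to pick $\mathfrak{su}(2)$-valued extensions $\hat A$ and an $\mathrm{SU}(2)$-valued $\hat g$; for such data the integrand $cs$ is a real $3$-form, since the trace form on $\mathfrak{su}(2)$ is real (the same reality underlying Proposition \ref{Pro:vol-int}). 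Hence the exponent of $\Theta$ is purely imaginary, $\Theta\in U(1)$, and the standard metric on $\C$ is invariant under the residual unitary gauge action and descends to a hermitian metric on $\mathcal L$.

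I expect the main obstacle to be the local triviality in the second step: the existence of smooth local slices for the infinite-dimensional $\mathcal G^\C$-action and the identification of the smooth manifold structure on $\mathcal M(\Sigma)$ at irreducible classes are the genuinely nontrivial inputs. By contrast the cocycle computation is routine bookkeeping once the Corollary is available, and the metric step rests only on the reality of $cs$ for $\mathrm{SU}(2)$.
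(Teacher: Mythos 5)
Your cocycle computation and your treatment of the hermitian metric agree with the paper's, and your local-triviality discussion via slices is a reasonable supplement. However, there is a genuine gap: you never verify that $\Theta(d+A,g)=1$ whenever $g\in\mathcal G^\C$ \emph{stabilizes} $d+A$, i.e.\ $(d+A).g=d+A$. This is the central point of the paper's proof and occupies almost all of it. The issue is that $\mathcal G^\C$ does not act freely on $\mathcal A_f^{ua}$: every connection is stabilized by $\pm\Id$, and reducible connections have larger stabilizers. The equivalence relation identifies $(d+A,\mu)\sim(d+A,\Theta(d+A,g)\mu)$ for every stabilizing $g$, so unless $\Theta(d+A,g)=1$ for all such $g$ the fiber over $[d+A]$ degenerates to a quotient of $\C$ by a nontrivial subgroup of $\C^*$ and $\mathcal L$ is not a line bundle. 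Your slice argument over the irreducible locus implicitly assumes the action is free on the slice, which it is not (the center $\pm\Id$ always acts trivially), and the bundle is moreover needed over the full moduli space including reducible classes, where it is later used (Lemma \ref{Lem:non-singular}) to control the monodromy of $\mathcal D$ around the singular points of $\mathcal M(\Sigma)$.

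This verification is not automatic from the definition of $\Theta$, because $\Theta(d+A,g)$ is computed from extensions $(\hat A,\hat g)$ to a filling $3$-manifold $B$, and one must exhibit extensions for which the two Chern--Simons terms cancel. The paper does this by a four-case analysis according to the monodromy of $d+A$: for irreducible $d+A$ the stabilizer is $\{\pm\Id\}$ and one extends $g$ by the constant $\pm\Id$; for trivial $d+A$ one writes $g=hCh^{-1}$ with $C$ constant and extends accordingly; for $\mathbb Z_2$-connections one must choose the handlebody $B$ so that the flat connection extends, and then extend $g$ explicitly; and for reducible non-$\mathbb Z_2$ connections one diagonalizes and extends the resulting constant diagonal gauge. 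Each case requires constructing $\hat g$ with $(d+\hat A).\hat g=d+\hat A$, whence $\Theta=1$ by extension-independence. Without this step the proposition is not established.
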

\begin{proof} 
As $\mathcal G^\C$ does not act freely on $\mathcal A_f^{ua},$ we  need to check that
$\Theta(A,g)=1$
whenever 
$(d+A).g=d+A.$
We distinguish different cases by the complexity of their monodromy.
\begin{enumerate}
\item If $d+A$ is irreducible, then every $\mathrm{SL}(2,\C)$ gauge transformation preserving $d+A$ must satisfy $g=\pm 1$. Then, we can choose the extension $\hat g=\pm 1$ to be constant as well, which satisfies  
$(d+\hat A).\hat g =d+\hat A $  for any extension of $d+A$ to $B$.  Thus by definition $\Theta(A,g)=1.$
\item If $d+A$ is trivial, there exists a gauge $h$ such that $(d+A).h=d.$ Let $\hat h\colon B\to\mathcal \mathrm{SL}(2,\C)$ be an extension of $h$ to $B$. Then
$\hat A=-d\hat h \hat h^{-1}$ is an extension of $A$ to $B$. Let $g$ be a gauge satisfying $(d+A).g=d+A.$
Then $h^{-1}gh=C$ is constant.  Define
\[\hat g=\hat hC \hat h^{-1}.\]
Then 
\[(d+\hat A).\hat g=(d.{\hat h}^{-1}).\hat hC \hat h^{-1}= d.\hat h^{-1}=(d+\hat A),\] hence $\Theta(A,g)=1$ holds in this case as well.
\item Next, we consider the case where  $d+A$ is $\mathbb Z_2$, i.e., $d+A$ is not trivial but all monodromies are $\pm 1$.  As $(-1)^2=1$, the monodromy is abelian (and unitarizable). Therefore, $d+A$ splits into the direct sum of line bundle connections. Moreover, on a 2-fold covering of $\Sigma$ the connection $d+A$ is gauge equivalent to the trivial connection, and
we can find some 3-dimensional handlebody $B$ with $\partial B=\Sigma$ such that the $\mathbb Z_2$ connection extends to a flat connection $d+\hat A$ on $B$ (this means that we chose
the cycles $\beta_1,\dots, \beta_{\text{genus}(\Sigma)}$ of $\Sigma$ which are  contractible in $B$ in such a way, that the monodromy along $\beta_k$ is  1 for all $k=1,\dots,\text{genus}(\Sigma)$).
Then $d+\hat A$ is gauge equivalent by some $h$ on $B$ to \[d+\tfrac{1}{2}\begin{pmatrix}\frac{df}{f}&0\\0&-\frac{df}{f}\end{pmatrix}\]
for some function $f\colon B\to\C,$
and 
\[(\hat h_{\mid\Sigma})^{-1}g\,\hat h_{\mid\Sigma}=\begin{pmatrix} a& b/f_{\mid\Sigma}\\ c f_{\mid\Sigma}&d\end{pmatrix} \quad \text{with} \quad ad-bc=1.\]
In this way we can extend $g$ to $\hat g$ satisfying $(d+\hat A).\hat g=d+\hat A$, which shows $\Theta(A,g)=1$.
\item If $d+A$ is reducible but not $\mathbb Z_2$, then we can split $\C^2=L\oplus L^*$ into its two unique parallel subbundles (since $d+A$ is unitarizable). Thus there exists a  gauge transformation $h$ on $\Sigma$ such that
\[d+\wt A:=(d+A).h=d+\begin{pmatrix} \eta&0\\0&-\eta\end{pmatrix}.\]
Let $g$ be a gauge with $(d+A).g = d+A$. Then
$(d+\wt A)=(d+\wt A).(h^{-1}gh)$
implying  $h^{-1}gh=\mathrm{diag}(c,\tfrac{1}{c})$
%\begin{pmatrix}c&0\\0&\tfrac{1}{c}\end{pmatrix}\]
 is diagonal and constant (here we use that  $d+A$ and therefore $d+\wt A$ are reducible but not $\mathbb Z_2$).
Extend $h$ on $\Sigma$ to $\hat h$ on $B$ (which is possible as $h$ maps into $\mathrm{SL}(2,\C)$), and extend $\eta\in\Omega^1(\Sigma,\C)$ to $\hat\eta\in\Omega^1(B,\C)$ using a bump function. Consider the extensions
\[d+\hat A=\left(d+\begin{pmatrix} \hat\eta&0\\0&-\hat\eta\end{pmatrix}\right).\hat h^{-1}\]
and 
\[\hat g=\hat h\begin{pmatrix}c&0\\0&\tfrac{1}{c}\end{pmatrix}\hat h^{-1}.\]
Then
\[(d+\hat A).\hat g=(d+\hat A),\] and $\Theta(A,g)=1$ also holds in the last case.
\end{enumerate}
To show that $\sim$ is an equivalence relation we only need to  show that $\sim$ is transitive, which is equivalent to the cocyle relation
\[\Theta(A,g)\Theta(A.g,h)=\Theta(A,gh)\]
for $A\in\mathcal A_f^{ua}$ and $g,h\in\mathcal G^\C.$ This follows directly from the
definition and
\[\mathrm{CS}((d+A).(gh))-\mathrm{CS}(d+A)=\mathrm{CS}((d+A).(gh))-\mathrm{CS}((d+A).g)+\mathrm{CS}((d+A).g)-\mathrm{CS}(d+A).\]

That the line bundle $\mathcal L$ inherits a hermitian metric follows from the act that $\mathrm{CS}(d+A)\in \R$ for $A\in\mathcal A_f^u.$ Thus there is a well-defined hermitian metric on $\mathcal L$ for which
a vector $v\in\mathcal L$ has length 1 if and only if it can be represented as
$(d+A,\mu)$
with $d+A$ unitary and $\mu\in S^1.$ 
\end{proof}

\subsection{The natural connection $\mathcal D$ on the Chern-Simons line bundle}
Let $\mathcal A$ be the (affine) space of $\mathrm{SL}(2,\C)$ connections
which we identify with $\Omega^1(\Sigma,\mathrm{sl}(2,\C)).$
Consider the trivial line bundle $\mathcal A\times \C\to \mathcal A$ and the connection
\begin{equation}\label{def:conA}d+\hat \alpha=d_\mathcal A+\hat \alpha,\end{equation}
where 
\[\hat\alpha_{d_\Sigma+A}(X):=\frac{i}{4\pi}\int_\Sigma\tr(A\wedge X).\]
The curvature of  $d+\hat \alpha$ is
\begin{equation}\label{cur-CS}
\begin{split}
d\hat\alpha_A(X,Y)=& X\cdot \hat\alpha_{d+A}(Y)-Y\cdot \hat\alpha_{d+A}(X)-\hat\alpha_{d+A}([X,Y])
=\frac{i}{2\pi}\int_\Sigma\tr(X\wedge Y)=:\Omega(X,Y).
\end{split}
\end{equation}
By Atiyah-Bott \cite{AB}, 
 $\Omega$ restricted to $\mathcal A_f^u$ is the pull-back of a (imaginary valued) symplectic form (also denoted by $\Omega$) on $\mathcal M(\Sigma)$. The cohomology class of $\Omega$ on $\mathcal M(\Sigma)$ is integral, and
% Clearly, $\Omega$ on $\mathcal M(\Sigma)$ can be obtained  from symplectic reduction of $\mathcal A_f^{ua}$ by the complex gauge group.
 there is a natural connection $\mathcal D$ on $\mathcal L\to\mathcal M(\Sigma)$ whose curvature is $\Omega.$
We explain this construction in the following, see also \cite{RSW} :
 \begin{Pro}\label{pro:4}
 The connection $d+ \hat\alpha$ restricted to $\mathcal A_f^{ua}$ is the pull-back of a connection $\mathcal D$ on $\mathcal L\to\mathcal M(\Sigma)$.
 \end{Pro}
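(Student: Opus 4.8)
The plan is to show that the trivial-bundle connection $d+\hat\alpha$ is compatible with the $\mathcal G^\C$-action defining $\mathcal L$, so that it descends to the quotient. Recall $\mathcal L=(\mathcal A_f^{ua}\times\C)/\mathcal G^\C$, where $g\in\mathcal G^\C$ acts by $\Psi_g(d+A,\mu)=((d+A).g,\Theta(A,g)\mu)$; this is a genuine right action by the cocycle relation $\Theta(A,g)\Theta(A.g,h)=\Theta(A,gh)$ established above. Since $\Psi_g$ is a bundle automorphism of $\mathcal A_f^{ua}\times\C$ covering the $\mathcal G^\C$-action on $\mathcal A_f^{ua}$, and $\mathcal L$ is the associated quotient line bundle over $\mathcal M(\Sigma)=\mathcal A_f^{ua}/\mathcal G^\C$, the connection descends precisely when its horizontal distribution $H$ is (i) $\mathcal G^\C$-invariant, i.e. $\Psi_g^*(d+\hat\alpha)=d+\hat\alpha$, and (ii) absorbs the gauge-orbit directions, i.e. the $(d+\hat\alpha)$-horizontal lift of each infinitesimal gauge direction $d_A\xi$ coincides with the generator of the $\Psi$-action on the fiber. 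Condition (ii) ensures that horizontally lifted gauge directions are tangent to the $\Psi$-orbits and hence project to zero in $\mathcal L$, so that the pushed-forward distribution is complementary to the fibers of $\mathcal L\to\mathcal M(\Sigma)$ and defines a connection $\mathcal D$.

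To verify (i) I would first turn Proposition \ref{pro:chs} into an explicit formula for $\Theta$. Integrating the identity \eqref{eq:CSdidd} over $B$ and applying Stokes' theorem, the exact term contributes only a boundary integral over $\Sigma=\partial B$, while the cubic Maurer-Cartan term is independent of $A$; this yields
\[\log\Theta(A,g)=\frac{i}{4\pi}\Big(-\tfrac13\int_B\tr(\hat g^*\omega\wedge\hat g^*\omega\wedge\hat g^*\omega)+\int_\Sigma\tr(g^{-1}Ag\wedge g^{-1}dg)\Big)\]
modulo $2\pi i\Z$. Only the second term depends on $A$, so differentiating in $A$ and using the trace-wedge identities gives $d_A\log\Theta(A,g)(\dot A)=-\frac{i}{4\pi}\int_\Sigma\tr(dg\,g^{-1}\wedge\dot A)$. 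On the other hand, a direct computation of $\hat\alpha_{A.g}$ from $A.g=g^{-1}Ag+g^{-1}dg$ shows $\hat\alpha_{A.g}(\tfrac{d}{dt}(A.g))-\hat\alpha_A(\dot A)=\frac{i}{4\pi}\int_\Sigma\tr(dg\,g^{-1}\wedge\dot A)$. Comparing the two, the $\Psi_g$-image of a horizontal curve is again horizontal, which is exactly the invariance (i).

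For (ii) I would differentiate $\Theta$ along a one-parameter subgroup $g=\exp(t\xi)$ with $\xi\in\Omega^0(\Sigma,\mathfrak{sl}(2,\C))$. The cubic term is $O(t^3)$ and drops out, leaving $\frac{d}{dt}\big|_{t=0}\log\Theta(A,e^{t\xi})=\frac{i}{4\pi}\int_\Sigma\tr(A\wedge d\xi)$. The infinitesimal gauge action on $\mathcal A$ is $d_A\xi=d\xi+[A,\xi]$, so $\hat\alpha_A(d_A\xi)=\frac{i}{4\pi}\int_\Sigma\tr(A\wedge d\xi)+\frac{i}{4\pi}\int_\Sigma\tr(A\wedge[A,\xi])$. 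Here is where flatness enters, and why the statement concerns only $\mathcal A_f^{ua}$: the pointwise identity $\tr(A\wedge[A,\xi])=2\tr((A\wedge A)\xi)$ together with $A\wedge A=-dA$ and integration by parts on the closed surface $\Sigma$ gives $\int_\Sigma\tr(A\wedge[A,\xi])=-2\int_\Sigma\tr(A\wedge d\xi)$, whence $\hat\alpha_A(d_A\xi)=-\frac{i}{4\pi}\int_\Sigma\tr(A\wedge d\xi)=-\frac{d}{dt}\big|_{t=0}\log\Theta(A,e^{t\xi})$, which is precisely (ii).

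The main obstacle is bookkeeping rather than conceptual: all the content sits in Proposition \ref{pro:chs}, and the work is to extract from it the two infinitesimal identities while keeping the signs in the manipulations $\tr(\alpha\wedge\beta)=(-1)^{k\ell}\tr(\beta\wedge\alpha)$ straight. The one genuinely structural point is that (ii) fails off the flat locus, since the term $\int_\Sigma\tr(A\wedge[A,\xi])$ is controlled only through $A\wedge A=-dA$, so the restriction to $\mathcal A_f^{ua}$ is essential and not cosmetic. Once (i) and (ii) are in hand, the horizontal distribution descends to the desired connection $\mathcal D$ on $\mathcal L\to\mathcal M(\Sigma)$.
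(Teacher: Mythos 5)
Your proposal is correct, and it rests on exactly the same computational core as the paper's proof --- the explicit formula for $\log\Theta$ extracted from Proposition \ref{pro:chs} by Stokes, followed by the flatness identity $A\wedge A=-dA$ and one more integration by parts on the closed surface $\Sigma$. The difference is organizational: the paper verifies the single change-of-trivialization identity \eqref{eq:Dtra}, namely $\hat\alpha_{d+A}(\dot A)=\hat\alpha_{d+\wt A}(\dot{\wt A})+\tfrac{d}{d\tau}\log\Theta(A(\tau),g(\tau))$, for an arbitrary simultaneous variation of the connection and the gauge (Lemma \ref{lem:dertheta} computes the full derivative of $\log\Theta$ in one go), whereas you split the descent condition into (i) invariance of $d+\hat\alpha$ under the finite action $\Psi_g$ along base directions and (ii) horizontality of the fundamental vector fields of the $\mathcal G^\C$-action; the cocycle relation is what guarantees that (i) and (ii) together recover the general mixed variation, and it would be worth one sentence in your write-up to say so explicitly. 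Your decomposition buys a structural point that is invisible in the paper's combined computation: condition (i) holds on all of $\mathcal A$ with no hypothesis on $A$, while flatness is used only in (ii), through $\int_\Sigma\tr(A\wedge[A,\xi])=-2\int_\Sigma\tr(A\wedge d\xi)$, so the restriction to the flat locus is forced precisely by the gauge-orbit directions. The paper's single-identity route is shorter to state and matches how one actually checks that connection forms in overlapping local trivializations of $\mathcal L$ glue, which is the formulation it needs later when computing holonomies. Both arguments share the same unaddressed technicalities (non-freeness of the action at reducibles, smoothness of the quotient), which the paper defers to Proposition \ref{pro:csbundle} and Lemma \ref{Lem:non-singular}; your proof inherits that deferral and is none the worse for it.
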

\begin{proof}
Consider a curve $\tau\in I\mapsto A(\tau)\in\mathcal A_f^{ua},$ and a family of gauge transformations $\tau\in I\mapsto g(\tau)$.
Let $\wt A(\tau)=A(\tau).g(\tau)$.
Then $d+A(\tau)$ and $d+\wt A(\tau)$  give
rise to the same curve $\gamma(\tau)$ in the moduli space $\mathcal M,$ but they determine different trivializations of $\mathcal L$ over $\gamma.$
Since the transition function between these two trivializations is
$\Theta(A(\tau),g(\tau))$, we have to prove that
\begin{equation}\label{eq:Dtra}\hat\alpha_{d+A}(\dot{A})=\hat\alpha_{d+\wt A}(\dot{\tilde A})+\frac{d}{d\tau}\log\Theta(A(\tau),g(\tau))\mid_{\tau=0}.\end{equation}

This then implies that the  expressions in different local trivializations fit together, yielding a well-defined connection $\mathcal D.$
We first compute the last term in \eqref{eq:Dtra}.
\begin{Lem}\label{lem:dertheta}
We have 
\begin{equation}\label{lem:dertheta}
\begin{split}
\frac{d}{d\tau}\log(\Theta(A(\tau),g(\tau)))\mid_{\tau=0}
&=\tfrac{i}{4\pi}\int_\Sigma\tr( g^{-1}dg\wedge\xi g^{-1}dg)+\tfrac{i}{4\pi}\int_\Sigma\tr( g^{-1}A\wedge d\dot g)\\
&-\tfrac{i}{4\pi}\int_\Sigma\tr( dg\wedge g^{-1}\dot A )+\tfrac{i}{4\pi}\int_\Sigma\tr( dg\wedge\xi g^{-1}A).
\end{split}
\end{equation}
\end{Lem}
\begin{proof}
We denote derivative with respect to $\tau$ at $\tau=0$ by a dot and let
$\xi=g^{-1}\dot{g}$.
Using \begin{equation}
\begin{split}
d\xi&=-g^{-1}dg\xi+g^{-1}d\dot g,\\
 \frac{d}{d\tau}|_{\tau=0}(g^{-1}dg)&=-\xi g^{-1}dg+g^{-1}d\dot g,\\
\tr(\xi g^{-1}dg\wedge g^{-1}dg\wedge g^{-1}dg)&=\tr(g^{-1}dg\xi\wedge g^{-1}dg\wedge g^{-1}dg)\\
\end{split}
\end{equation}
 we obtain from  \eqref{eq:CSdidd}
\begin{equation}
\begin{split}
\frac{d}{d\tau}|_{\tau=0}\log(\Theta(A(\tau),g(\tau)))=&\frac{i}{4\pi}
\frac{d}{d\tau}|_{\tau=0}\left(-\frac{1}{3}\int_B \tr(g^{-1}dg\wedge g^{-1}dg\wedge g^{-1}dg)+\int_\Sigma\tr(g^{-1}A\wedge dg)\right)\\
=&-\frac{i}{4\pi}\int_B \tr \left( \frac{d}{d\tau}{|_{\tau=0}}(g^{-1}dg)\wedge g^{-1}dg\wedge  g^{-1}dg \right)+\frac{i}{4\pi}\int_\Sigma\tr(-\xi g^{-1}A\wedge dg)\\&+\frac{i}{4\pi}\int_\Sigma\tr(g^{-1}\dot A\wedge dg)+\frac{i}{4\pi}\int_\Sigma\tr(g^{-1}A\wedge d\dot g )\\
=&-\frac{i}{4\pi}\int_\Sigma \tr( \xi g^{-1}dg\wedge  g^{-1}dg)+\frac{i}{4\pi}\int_\Sigma\tr(dg\wedge \xi g^{-1}A)\\
&-\frac{i}{4\pi}\int_\Sigma\tr(dg\wedge g^{-1}\dot A)+\frac{i}{4\pi}\int_\Sigma\tr(g^{-1}A\wedge d\dot g).
\end{split}
\end{equation}
\end{proof}
To prove Proposition \ref{pro:4}, we compute
\begin{equation*}
\begin{split}
\wt A&=g^{-1} A g + g^{-1}dg\\
\dot{\tilde A}&=-\xi g^{-1}Ag+g^{-1}\dot A g+g^{-1}Ag \xi-\xi g^{-1}dg+g^{-1}d\dot g.
\end{split}
\end{equation*}
By Lemma \ref{lem:dertheta}, we then obtain
\begin{align*}
\hat\alpha_{d+\wt A}&(\dot{\tilde A})+\frac{d}{d\tau}\log\Theta(A(\tau),g(\tau))
=\frac{i}{4\pi}\int_{\Sigma}\tr\left(\wt A\wedge\dot{\tilde A}\right)+\frac{d}{d\tau}\log\Theta(A(\tau),g(\tau))\\
&=\frac{i}{4\pi}\int_{\Sigma}\tr\left(A\wedge \dot{A}+2 g^{-1} A\wedge A \dot{g}+2 g^{-1}dg\wedge g^{-1} A \dot{g}+2 g^{-1}A\wedge d\dot{g}+g^{-1} dg\wedge g^{-1}d\dot{g}\right)\\
&=\frac{i}{4\pi}\int_{\Sigma}\tr\left(A\wedge\dot{A}\right)+\frac{i}{4\pi}\int_{\Sigma}d\,\tr\left(-2g^{-1}A\dot{g}-g^{-1}d\dot{g}\right)=\hat\alpha_{d+A}(\dot{A})
\end{align*}
using flatness ($dA+A\wedge A=0$) and Stokes Theorem.
\end{proof}
A priori, the connection $\mathcal D$ could behave badly at singular points of $\mathcal M(\Sigma)$ (these are given by reducible flat connections $\nabla$ on $\Sigma$). The following lemma shows that this is not the case.
\begin{Lem}\label{Lem:non-singular}
The local monodromy of $\mathcal D$ on $\mathcal L\to\mathcal M$ around a non-smooth point $[\nabla]\in\mathcal M$ is trivial, i.e.
\[\Hol(\gamma)=\exp\left(-\int_D \Omega\right) \]
where $\Omega$ is the curvature of $\mathcal D$ and $D\subset \mathcal M$ is an arbitrary  disc with (oriented) boundary $\gamma.$
\end{Lem}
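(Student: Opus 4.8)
The plan is to isolate the only possible source of nontrivial local monodromy and show it vanishes: a priori $\mathcal D$ is defined only over the smooth locus of $\mathcal M$, and its holonomy around a small loop $\gamma$ enclosing the singular point $[\nabla_0]$ could fail to be given by the curvature integral only because the isotropy group $\mathrm{Stab}(\nabla_0)$ of a reducible representative acts on the fiber $\mathcal L_{[\nabla_0]}$. I would therefore first prove that this isotropy action is trivial, so that $\mathcal L$ together with $\mathcal D$ descends to a genuine (non-orbifold) line bundle with connection in a full neighborhood of $[\nabla_0]$. Granting this, the asserted identity $\Hol(\gamma)=\exp(-\int_D\Omega)$ is simply the abelian holonomy--curvature relation (Stokes' theorem applied to $\Omega=d\hat\alpha$ of \eqref{cur-CS}), now permitted to hold for discs $D$ meeting the singular point.

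Concretely, I would describe $\mathcal M$ near $[\nabla_0]$ by the slice/Kuranishi model: a neighborhood is the quotient of a $\mathrm{Stab}(\nabla_0)$-invariant neighborhood in $H^1(\Sigma,\mathrm{ad}\,\nabla_0)$, cut out by the quadratic obstruction, by $\mathrm{Stab}(\nabla_0)$, which is $U(1)$ for a non-central reducible and $\mathrm{SU}(2)$ for a trivial or $\mathbb Z_2$-connection. By the very construction of $\mathcal L$ in Proposition \ref{pro:csbundle}, the action of $g\in\mathrm{Stab}(\nabla_0)$ on $\mathcal L_{[\nabla_0]}$ is multiplication by $\Theta(A_0,g)$; and the case analysis (1)--(4) in the proof of Proposition \ref{pro:csbundle} shows exactly that $\Theta(A_0,g)=1$ whenever $(d+A_0).g=d+A_0$. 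Hence the isotropy acts trivially on the fiber, the equivalence relation identifies the fiber over $[\nabla_0]$ with a single copy of $\C$, and $\mathcal L$ extends as an honest line bundle across $[\nabla_0]$. Because the connection form $\hat\alpha$ of \eqref{def:conA} and its curvature $\Omega$ are given by the globally defined smooth expressions $\tfrac{i}{4\pi}\int_\Sigma\tr(A\wedge X)$ and $\tfrac{i}{2\pi}\int_\Sigma\tr(X\wedge Y)$, which make sense at reducible connections as well, $d+\hat\alpha$ descends to a smooth $\mathcal D$ on this extended bundle. Since the link of $[\nabla_0]$ in $\mathcal M$ is simply connected in the slice model, $\gamma$ bounds a disc $D_0$ lying entirely in the smooth locus; lifting $D_0$ through a local section of $\mathcal A_f^{ua}\to\mathcal M$ closes $\gamma$ up with trivial return gauge, so $\Theta$ contributes nothing and Stokes' theorem upstairs gives $\Hol(\gamma)=\exp(-\int_{D_0}\Omega)$.

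Finally, to reach the stated formula for an \emph{arbitrary} disc $D$ with $\partial D=\gamma$, I would compare $D$ with $D_0$: the $2$-sphere $D\cup(-D_0)$ may pass through $[\nabla_0]$, but $\Omega$ extends smoothly across it and represents an integral class, so $\int_{D}\Omega-\int_{D_0}\Omega=\int_{D\cup(-D_0)}\Omega\in 2\pi\Z$ and $\exp(-\int_D\Omega)=\exp(-\int_{D_0}\Omega)=\Hol(\gamma)$. I expect the main obstacle to be the slice-model bookkeeping at the most degenerate points, the trivial and $\mathbb Z_2$-connections, where $\mathrm{Stab}(\nabla_0)=\mathrm{SU}(2)$ is nonabelian and the obstruction is genuinely quadratic: the fiberwise triviality $\Theta(A_0,g)=1$ is uniform, but promoting it to smoothness of the descended $\mathcal D$ across $[\nabla_0]$, together with the integrality of $[\Omega]$ over cycles through the singular point, is the technical heart of the argument.
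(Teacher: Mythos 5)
Your proposal is correct and takes essentially the same route as the paper: the paper's one-line proof likewise rests on the observation that $\mathcal L$ and the connection $1$-forms extend continuously across the reducible points because the case analysis in the proof of Proposition \ref{pro:csbundle} gives $\Theta(A_0,g)=1$ for every stabilizing gauge, after which the holonomy--curvature identity is Stokes' theorem. Your slice-model bookkeeping and the integrality argument for an arbitrary disc $D$ merely flesh out what the paper leaves implicit.
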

\begin{proof}
The statement follows since the Chern-Simons line bundle $\mathcal L$ and the connection 1-forms
extend continuously to the reducible points by the proof of Proposition \ref{pro:csbundle}.
\end{proof}

\section{Enclosed volume of CMC surfaces in $\mathbb S^3$}\label{sec:evs3}
\begin{Def}
Let $f\colon \Sigma\to\mathbb S^3$ 
be an embedded closed oriented surface  in the round 3-sphere of curvature 1.
It bounds a domain $B\subset \mathbb S^3$, and
let 
\[\mathcal V(f):=Vol_{\mathbb S^3}(B)\in (0,2\pi^2)\]
denote the enclosed volume of $f$.
If $f\colon \Sigma\to\mathbb S^3$ is an immersion, we consider any extension $F\colon B\to \mathbb S^3=\mathrm{SU}(2)$
and define the (algebraic) enclosed volume by
\[\mathcal V(f):=\int_BF^*\text{vol}_{\mathbb S^3}=-\frac{1}{12}\int_BF^*\tr(\omega\wedge\omega\wedge\omega)\in \R/2\pi^2\Z.\]
\end{Def}
By Proposition \ref{Pro:vol-int}, $\mathcal V(f)$ modulo $2\pi^2\Z$ does not depend on
the choice of $B$ or $F$.
\begin{Rem}
A closed surface $f\colon \Sigma\to\mathbb S^3$ is called Alexandrov embedded if there is a handlebody  $B$ bounded by $\Sigma$ such that $f$ extends 
to an immersion $F\colon B \to\mathbb S^3$. Hence, also for Alexandrov embedded surfaces, the enclosed volume is a well-defined real number.
\end{Rem}
\subsection{The associated family of flat connections}
Similarly to minimal surfaces,  CMC surfaces $f:\Sigma\to\mathbb S^3$ can be described by an associated family of flat connections.
This fact can be seen as a corollary of the Lawson correspondence between minimal and CMC surfaces in $\mathbb S^3.$ Again, we identify
$\mathrm{SU}(2)=\mathbb S^3$
via
\[g=\begin{pmatrix} x&-\bar y\\ y&\bar x\end{pmatrix}\in\mathrm{SU}(2) \mapsto (x,y)\in \mathbb S^3\subset\C^2.\]
Note that the round metric of curvature 1 is given by 
\[X,Y\in\mathfrak{su}(2)\cong T_g\mathrm{SU}(2)\longmapsto -\tfrac{1}{2}\tr(XY),\]
where we use the left trivialization of the tangent bundle.

The following result is well known, see  \cite{Hi} for the minimal case, and \cite[Lemma 2.2]{SKKR} for the CMC case, or \cite{B, HHSch}. We include the proof here to make the paper self-contained.
\begin{The}
\label{theorem:CMC-connection}
Let $(\nabla^{\lambda})_{\lambda\in\C^*}$ be a family of connections on $\Sigma\times\C^2$ of the form
$$\nabla^{\lambda}=\nabla+\lambda^{-1}\Phi-\lambda\Phi^*$$
where $\nabla$ is a unitary connection
and $\Phi$ is a nilpotent $\sl(2,C)$-valued 1-form of type $(1,0)$ on the Riemann surface $\Sigma$.
Assume that $\nabla^{\lambda}$ is flat for all $\lambda$, and assume there exists
distinct points $\lambda_1,\lambda_2\in\S^1$ (called the Sym points) such that $\nabla^{\lambda_1}$
and $\nabla^{\lambda_2}$ are trivial. Then the gauge $f\colon\Sigma\to SU(2)$ such that
$$\nabla^{\lambda_1}. f=\nabla^{\lambda_2}$$
is a conformal CMC immersion of mean curvature
\begin{equation}
\label{eq:H}
H=i\frac{\lambda_1+\lambda_2}{\lambda_1-\lambda_2}.
\end{equation}
Its Willmore energy is given by
$$\mathcal W(f)=2i\int_{\Sigma}\tr(\Phi\wedge\Phi^*).$$
Conversely, if $f:\Sigma\to SU(2)$ is a conformal CMC immersion, let $\Psi$ be
the 1-form of type $(1,0)$ defined by
$$f^{-1}df=\Psi-\Psi^*.$$
Choose two points
$\lambda_1,\lambda_2\in\S^1$ such that \eqref{eq:H} holds and define
$$\Phi=\frac{1}{\lambda_2^{-1}-\lambda_1^{-1}}\Psi$$
$$\nabla^{\lambda}=d+(\lambda^{-1}-\lambda_1^{-1})\Phi-(\lambda-\lambda_1)\Phi^*.$$
Then $\nabla^{\lambda}$ is  flat for all $\lambda\in\C^*$, unitary for $\lambda\in\S^1$ and trivial at
$\lambda_1$ and $\lambda_2$.
\end{The}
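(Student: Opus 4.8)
My plan is to reduce the flatness of the entire family to a $\lambda$-independent system and to recognise that system, in each direction, as the Maurer--Cartan equation of $f$ together with the CMC condition. Writing $\nabla=d+a$ with $a\in\Omega^1(\Sigma,\su(2))$ and expanding the curvature of $\nabla^\lambda=\nabla+\lambda^{-1}\Phi-\lambda\Phi^*$, the terms $\Phi\wedge\Phi$ and $\Phi^*\wedge\Phi^*$ drop out because $\Phi$ is of pure type $(1,0)$. Collecting powers of $\lambda$ yields
\[
F^{\nabla^\lambda}=\bigl(F^{\nabla}-\Phi\wedge\Phi^*-\Phi^*\wedge\Phi\bigr)+\lambda^{-1}\,d^{\nabla}\Phi-\lambda\,d^{\nabla}\Phi^*,
\]
so that $\nabla^\lambda$ is flat for all $\lambda\in\C^*$ exactly when $d^\nabla\Phi=0$, $d^\nabla\Phi^*=0$ and $F^\nabla=\Phi\wedge\Phi^*+\Phi^*\wedge\Phi$. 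I would record this reformulation once, since both directions rest on it.

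For the converse I start from a conformal CMC $f$ and the data built from it. Since $\lambda_1,\lambda_2\in\S^1$ one has $\overline{\lambda_i^{-1}}=\lambda_i$, from which the connection $1$-form $(\lambda^{-1}-\lambda_1^{-1})\Phi-(\lambda-\lambda_1)\Phi^*$ is seen to be skew-hermitian for $\lambda\in\S^1$, giving unitarity; triviality at $\lambda_1$ is immediate since $\nabla^{\lambda_1}=d$, and at $\lambda_2$ the scalars telescope to $\nabla^{\lambda_2}=d+\Psi-\Psi^*=d.f$, which is gauge trivial. The stated $\nabla^\lambda$ is of the form above with the unitary connection $\nabla=d-\lambda_1^{-1}\Phi+\lambda_1\Phi^*$, so flatness amounts to $d\Phi=-\lambda_1(\Phi\wedge\Phi^*+\Phi^*\wedge\Phi)$ together with $d\Phi^*=\lambda_1^{-1}(\Phi\wedge\Phi^*+\Phi^*\wedge\Phi)$. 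Setting $\Phi=c\Psi$ with $c=(\lambda_2^{-1}-\lambda_1^{-1})^{-1}$ and $K=\Psi\wedge\Psi^*+\Psi^*\wedge\Psi$, these become $d\Psi=-\lambda_1\bar c\,K$ and $d\Psi^*=\lambda_1^{-1}c\,K$. Now the Maurer--Cartan equation of $f$, split into types, has vanishing $(2,0)$- and $(0,2)$-parts and $(1,1)$-part $d\Psi-d\Psi^*=K$, while the CMC condition (harmonicity of the Gauss map) is precisely the stronger statement that $d\Psi$ is a fixed scalar multiple of $K$; I would check that this scalar is $-\lambda_1\bar c$ using \eqref{eq:H}, the compatibility $-\lambda_1\bar c-\lambda_1^{-1}c=1$ then being automatic.

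For the direct statement, unitarity and triviality of $\nabla^{\lambda_1},\nabla^{\lambda_2}$ provide $\SU(2)$-valued parallel frames $F_{\lambda_1},F_{\lambda_2}$; I set $f=F_{\lambda_1}F_{\lambda_2}^{-1}$, which lands in $\SU(2)=\S^3$ and satisfies $\nabla^{\lambda_1}.f=\nabla^{\lambda_2}$ by a one-line gauge computation. Conformality is free: in a local conformal coordinate $\Psi=\psi\,dz$ with $\psi$ nilpotent and trace-free, hence $\psi^2=0$, so the $(2,0)$-part $-\tfrac12\tr(\psi^2)\,dz^2$ of the induced metric vanishes. The mean curvature is the real work: via the Sym--Bobenko formula I would compute the induced metric, unit normal and second fundamental form of $f$ by differentiating the parallel frame $F_\lambda$ in $\lambda$ at the two Sym points, and read off $H=i(\lambda_1+\lambda_2)/(\lambda_1-\lambda_2)$ as in \eqref{eq:H}. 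This Sym--Bobenko extraction is the main obstacle; the remaining identities are algebraic.

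Finally, for the Willmore energy I would work in the same conformal coordinate. There $-\tfrac12\tr\bigl((\psi-\psi^*)^2\bigr)=\tr(\psi\psi^*)$ is the conformal factor, so $dA=\tr(\psi\psi^*)\,dx\wedge dy$ and $2i\,\tr(\Phi\wedge\Phi^*)=4|c|^2\,dA$. Using \eqref{eq:H} one computes $4|c|^2=H^2+1$, whence
\[
\mathcal W(f)=\int_\Sigma(H^2+1)\,dA=2i\int_\Sigma\tr(\Phi\wedge\Phi^*),
\]
which is the claimed formula.
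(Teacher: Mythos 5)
Your framework is sound and most of the routine steps check out: the decomposition of $F^{\nabla^\lambda}$ into powers of $\lambda$, the skew-hermitian check for unitarity on $\S^1$, the triviality at the Sym points, the conformality from nilpotency, and the Willmore computation (including $4|c|^2=1+H^2$) all agree with what the paper does. The consistency identity $-\lambda_1\bar c-\lambda_1^{-1}c=1$ that you note is also correct and is exactly the Maurer--Cartan constraint.

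However, there is a genuine gap: in both directions you defer the one computation that carries the content of the theorem, namely the identification of the mean curvature. In the forward direction you write that you ``would compute'' the second fundamental form via a Sym--Bobenko extraction and ``read off'' $H$, calling this the main obstacle --- but that step is never performed, and it is not a routine appeal to a standard formula here, since $f$ is defined as the gauge $F_{\lambda_1}F_{\lambda_2}^{-1}$ rather than by a $\lambda$-derivative of the frame. The paper instead derives from flatness the structure equations $d\Phi=-\lambda_1[\Phi\wedge\Phi^*]$, $d\Phi^*=\lambda_1^{-1}[\Phi\wedge\Phi^*]$ and then evaluates both sides of $2H\,f^{-1}N\,dA=-d^{\nabla^{LC}}\ast f^{-1}df$, obtaining $[\Phi\wedge\Phi^*]$ times the explicit scalars $\frac{(\lambda_2-\lambda_1)^2}{\lambda_1\lambda_2}$ and $i\frac{\lambda_2^2-\lambda_1^2}{\lambda_1\lambda_2}$ whose ratio is \eqref{eq:H}; some such computation must appear in your proof. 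In the converse you assert that ``the CMC condition is precisely the stronger statement that $d\Psi$ is a fixed scalar multiple of $K$'' with scalar $-\lambda_1\bar c$, again to ``be checked'': this is exactly the same computation (the paper's Equations \eqref{eq:NdA} and \eqref{eq:Deltaf}), and without it neither flatness of $\nabla^\lambda$ nor the value of $H$ is actually established. Everything you do write is correct, but as it stands the proposal is an outline whose central step is missing in both directions.
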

\begin{proof}
Fix a trivialization such that $\nabla^{\lambda_1}=d$, i.e., 
$$\nabla^{\lambda}=d+(\lambda^{-1}-\lambda_1^{-1})\Phi-(\lambda-\lambda_1)\Phi^*.$$
Then $\nabla^{\lambda_2}=d+f^{-1}df$ and thus
$$f^{-1}df=(\lambda_2^{-1}-\lambda_1^{-1})\Phi-(\lambda_2-\lambda_1)\Phi^*.$$
Using $\Phi^2=0$, the induced metric on $\Sigma$ is
$$g=\frac{-1}{2}\tr(f^{-1}df\otimes f^{-1} df)
=\frac{1}{2}|\lambda_2-\lambda_1|^2\tr\left(\Phi\otimes\Phi^*+\Phi^*\otimes\Phi\right)$$
which shows that $f$ is conformal.

In the following, we switch back and forth between (unitary) connections on the spinor bundle $V=\underline\C^2$ and the associated connections
on the associated euclidean $\mathfrak{su}(2)$-bundle, and the complexification thereof. For details in the minimal case see \cite{He1}.
The Levi-Civita connection is
$\nabla^{LC}=d+\frac{1}{2}f^{-1}df,$
i.e.,
$$f^{-1}df\left(\nabla^{LC}_X Y\right)=d_X f^{-1}df(Y)+\frac{1}{2}\left [f^{-1}df(X),f^{-1}df(Y)\right].$$
for $X,Y$ vector fields on $\Sigma$.
Flatness of $\nabla^{\lambda}$ for all $\lambda$ gives us
$$0=F^{\nabla^{\lambda}}=(\lambda^{-1}-\lambda_1^{-1})d\Phi
-(\lambda-\lambda_1)d\Phi^*
-(\lambda^{-1}-\lambda_1^{-1})(\lambda-\lambda_1)[\Phi\wedge\Phi^*].$$
Looking at the coefficients of $\lambda^{-1}$ and $\lambda$ we obtain
\begin{equation}
\label{eq:dPhi}
\begin{cases}
d\Phi=-\lambda_1[\Phi\wedge\Phi^*]\\
d\Phi^*=\lambda_1^{-1}[\Phi\wedge\Phi^*].\end{cases}
\end{equation}
Let $\ast$ be the star operator on $\Sigma$ (i.e., $\ast dz=i dz$). Then
$$2H\, f^{-1}N\, dA= - d^{\nabla^{LC}} \ast f^{-1}df.$$
On the one hand,
\begin{equation}
\label{eq:NdA}
2 f^{-1}N\, dA=\frac{1}{2}[f^{-1}df\wedge f^{-1}df]
=\frac{(\lambda_2-\lambda_1)^2}{\lambda_1\lambda_2}[\Phi\wedge\Phi^*],
\end{equation}
while on the other hand we get from
$\ast f^{-1}df=i(\lambda_2^{-1}-\lambda_1^{-1})\Phi+i (\lambda_2-\lambda_1)\Phi^*$
\begin{align}
\nonumber
d^{\nabla^{LC}} \ast f^{-1}df&=d\ast f^{-1}df+\frac{1}{2}[f^{-1}df\wedge \ast f^{-1}df]\\
\label{eq:Deltaf}
&=i(\lambda_2^{-1}-\lambda_1^{-1})d\Phi+i(\lambda_2-\lambda_1)d\Phi^*
+\frac{i}{2}(\lambda_2^{-1}-\lambda_1^{-1})(\lambda_2-\lambda_1)\left([\Phi\wedge\Phi^*]-[\Phi^*\wedge\Phi]\right)\\
\nonumber
&=i\frac{(\lambda_2^2-\lambda_1^2)}{\lambda_1\lambda_2}[\Phi\wedge\Phi^*].
\end{align}
Hence $f$ has constant mean curvature $H$ given by \eqref{eq:H}.
Its area element is
$$dA=\frac{i}{2}|\lambda_2-\lambda_1|^2\tr(\Phi\wedge\Phi^*).$$
Equation \eqref{eq:H} gives
$$1+H^2=\frac{4}{|\lambda_2-\lambda_1|^2}.$$
Hence, the Willmore energy is given by
$$\mathcal W(f)=\int_{\Sigma}(1+H^2)dA=2i\int_{\Sigma}\tr(\Phi\wedge\Phi^*).$$
Conversely, if $f$ has constant mean curvature $H$ given by \eqref{eq:H}, define
$\Phi$ and $\nabla^{\lambda}$ as in Theorem \ref{theorem:CMC-connection}.
Then
$$f^{-1}df=(\lambda_2^{-1}-\lambda_1^{-1})\Phi-(\lambda_2-\lambda_1)\Phi^*$$
as before.
Conformality of $f$ gives that $\Phi$ is nilpotent.
Moreover, $\nabla^{\lambda_1}=d$ and
$\nabla^{\lambda_2}=d+f^{-1}df$ are trivial.
The Maurer-Cartan equation
$d(f^{-1}df)+f^{-1}df\wedge f^{-1}df=0$
gives the equation
$$(\lambda_2^{-1}-\lambda_1^{-1})d\Phi-(\lambda_2-\lambda_1)d\Phi^*
=(\lambda_2^{-1}-\lambda_1^{-1})(\lambda_2-\lambda_1)[\Phi\wedge\Phi^*].$$
By Equations \eqref{eq:NdA} and \eqref{eq:Deltaf}, the fact that $f$ has constant mean curvature $H$ gives the equation
$$(\lambda_2^{-1}-\lambda_1^{-1})d\Phi+(\lambda_2-\lambda_1)d\Phi^*
=-(\lambda_2^{-1}-\lambda_1^{-1})(\lambda_2+\lambda_1)[\Phi\wedge\Phi^*].$$
Solving for $d\Phi$ and $d\Phi^*$ yields \eqref{eq:dPhi} and flatness of 
$\nabla^{\lambda}$ for $\lambda\in\C^*$ follows.
\end{proof}

\subsection{Enclosed volume}
The following theorem extends \cite[Theorem 5]{Hi2} to compact CMC surfaces in $\mathbb S^3$.

\begin{The}\label{thm:holev}
Let $f\colon\Sigma\to\mathrm{SU}(2)=\mathbb S^3$ be a compact CMC immersion with mean curvature $H$ and
associated family of flat connections $\nabla^\lambda$.
Expressing the Sym points as $\lambda_1=e^{i\tau_1}$ and $\lambda_2=e^{i\tau_2}$,
 the holonomy $\Hol(\mathcal D,\gamma)$ of the connection $\mathcal D$ on $\mathcal L\to\mathcal M(\Sigma)$ along the closed loop 
\begin{equation}\label{def:gamma}\gamma\colon [\tau_1,\tau_2]\longrightarrow \mathcal M(\Sigma);\quad \tau \longmapsto [\nabla^{e^{i\tau}}]\end{equation}
is given by
\[\Hol(\mathcal D,\gamma)=\exp\left(\frac{i}{4\pi}\big(\tau_2-\tau_1-\sin(\tau_2-\tau_1)\big)\mathcal W(f)-\frac{i}{\pi}\mathcal V(f)\right).\]
\end{The}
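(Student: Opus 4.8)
The plan is to realize the holonomy of $\mathcal D$ along $\gamma$ as the product of two explicit contributions: a \emph{dynamical} factor coming from parallel transport of the pulled-back connection $d+\hat\alpha$ along a canonical lift of $\gamma$ to $\mathcal A_f^u$, and a \emph{cocycle} factor $\Theta$ accounting for the failure of this lift to be a closed loop. Concretely, I would fix the trivialization in which $\nabla^{\lambda_1}=d$, so that the lift reads $A(\tau)=(\lambda^{-1}-\lambda_1^{-1})\Phi-(\lambda-\lambda_1)\Phi^*$ with $\lambda=e^{i\tau}$, as in the proof of Theorem \ref{theorem:CMC-connection}. Then $A(\tau_1)=0$ while $\nabla^{\lambda_2}=d.f$ with $A(\tau_2)=f^{-1}df$, so the two endpoints project to the same (reducible, hence singular) point $[d]\in\mathcal M(\Sigma)$ but their lifts differ by the gauge $f$. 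Since by Proposition \ref{pro:4} the connection $\mathcal D$ pulls back to $d+\hat\alpha$, and the two local trivializations at the endpoints are glued by $\Theta(0,f)$, the holonomy becomes
\[
\Hol(\mathcal D,\gamma)=\Theta(0,f)^{-1}\exp\Big(-\int_{\tau_1}^{\tau_2}\hat\alpha_{A(\tau)}(\dot A(\tau))\,d\tau\Big).
\]
That the loop meets the singular locus only at its basepoint causes no trouble, thanks to Lemma \ref{Lem:non-singular}.

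For the dynamical factor I would compute $\hat\alpha_{A}(\dot A)=\tfrac{i}{4\pi}\int_\Sigma\tr(A\wedge\dot A)$ directly. Using $\dot A=-i\lambda^{-1}\Phi-i\lambda\Phi^*$, the fact that $\Phi$ and $\Phi^*$ have types $(1,0)$ and $(0,1)$ so that $\Phi\wedge\Phi=\Phi^*\wedge\Phi^*=0$, and the skew-symmetry $\tr(\Phi^*\wedge\Phi)=-\tr(\Phi\wedge\Phi^*)$, the surviving mixed terms collapse to
\[
\tr(A\wedge\dot A)=-2i\big(1-\cos(\tau-\tau_1)\big)\tr(\Phi\wedge\Phi^*),
\]
where I have used $\lambda\lambda_1^{-1}+\lambda^{-1}\lambda_1=2\cos(\tau-\tau_1)$. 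Recalling $\mathcal W(f)=2i\int_\Sigma\tr(\Phi\wedge\Phi^*)$ from Theorem \ref{theorem:CMC-connection}, this gives $\hat\alpha_A(\dot A)=-\tfrac{i}{4\pi}\big(1-\cos(\tau-\tau_1)\big)\mathcal W(f)$, and integrating $1-\cos(\tau-\tau_1)$ over $[\tau_1,\tau_2]$ produces exactly the factor $\tau_2-\tau_1-\sin(\tau_2-\tau_1)$.

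For the cocycle factor I would evaluate $\Theta(0,f)=\exp\big(i\,\mathrm{CS}_B(d.\hat f)-i\,\mathrm{CS}_B(d)\big)$ on a filling $B$ of $\Sigma$ with an extension $\hat f\colon B\to\mathrm{SU}(2)$ of $f$; by the corollary following Proposition \ref{pro:chs} the answer is independent of these choices, so I may take $B$ and $\hat f=F$ to be the volume-defining immersed filling. Extending $A=0$ by $\hat A=0$ gives $\mathrm{CS}_B(d)=0$, while the Maurer--Cartan relation $d(\hat f^*\omega)=-\hat f^*\omega\wedge\hat f^*\omega$ reduces $\mathrm{CS}_B(d.\hat f)$ to $-\tfrac{1}{12\pi}\int_B\tr(\hat f^*\omega\wedge\hat f^*\omega\wedge\hat f^*\omega)$. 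By the volume-form identity established in Proposition \ref{Pro:vol-int} this equals $\tfrac{1}{\pi}\mathcal V(f)$, so $\Theta(0,f)=\exp\big(\tfrac{i}{\pi}\mathcal V(f)\big)$. Substituting both factors into the above expression for $\Hol(\mathcal D,\gamma)$ yields the claimed formula.

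The step I expect to be the main obstacle is the clean bookkeeping of the holonomy formula itself: pinning down that the lift $A(\tau)$ closes only up to the gauge $f$, that the endpoint trivializations of $\mathcal L$ are identified precisely by $\Theta(0,f)$ (with the correct inverse exponent and sign conventions), and that passing through the reducible basepoint is harmless via Lemma \ref{Lem:non-singular}. A reassuring consistency check is that $\mathcal V(f)$ is only defined modulo $2\pi^2\Z$, yet $\exp\big(-\tfrac{i}{\pi}\mathcal V(f)\big)$ is insensitive to this ambiguity, matching the intrinsic well-definedness of the holonomy.
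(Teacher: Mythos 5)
Your proposal is correct and follows essentially the same route as the paper's proof: the same non-closed lift $\tau\mapsto\nabla^{e^{i\tau}}$ in the trivialization where $\nabla^{\lambda_1}=d$, the same splitting of the holonomy into the parallel-transport factor for $d+\hat\alpha$ (yielding $\tau_2-\tau_1-\sin(\tau_2-\tau_1)$ times $\tfrac{i}{4\pi}\mathcal W(f)$) and the cocycle factor $\Theta(d,f)^{-1}=\exp(-\tfrac{i}{\pi}\mathcal V(f))$ computed via Proposition \ref{pro:chs} and the volume-form identity. Your explicit appeal to Lemma \ref{Lem:non-singular} at the reducible basepoint is a reasonable extra precaution the paper leaves implicit, but the argument is otherwise identical.
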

\begin{proof}
By construction $\nabla^\lambda$ is trivial at $\lambda_1$ and $\lambda_2$.
Hence $\gamma\colon [\tau_1,\tau_2]\to \mathcal M(\Sigma);\, \tau \mapsto [\nabla^{e^{i\tau}}]$ is actually a closed curve in the moduli space $\mathcal M(\Sigma)$.
And the holonomy of $\mathcal D$ along $\gamma$ can be computed as follows:
take the (non-closed) lift $\hat\gamma$ of $\gamma$ given by 
$\tau\in[\tau_1,\tau_2]\mapsto \nabla^{e^{i\tau}}.$
Note that $\hat\gamma(\tau_2)$ and $\hat\gamma(\tau_1)$ differ by the gauge $f$. As a consequence, we have in $\mathcal L$
$$(\hat\gamma(\tau_1),X)\sim (\hat\gamma(\tau_2),\Theta(d,f)X)$$
where by Proposition \ref{pro:chs} and the definition of enclosed volume
\[\Theta(d,f)=\exp\left(\tfrac{i}{4\pi}\int_B\tfrac{-1}{3}\tr(\wh f^*\omega\wedge \wh f^*\omega\wedge \wh f^*\omega)\right)=\exp\left(\tfrac{i}{\pi}\mathcal V(f)\right).\]

Let $X(\tau)$ be the parallel transport of $X(\tau_1)=1$ along $\hat\gamma$.
Since $\mathcal L$ is a line bundle
$$X(\tau_2)=\exp\left(-\int_{\tau_1}^{\tau_2}\hat\alpha_{\hat\gamma(\tau)}(\hat\gamma'(\tau))d\tau\right).$$
We have
\begin{equation*}
\begin{split}
\hat\gamma(\tau)&=d+(e^{-i\tau}-e^{-i\tau_1})\Phi-(e^{i\tau}-e^{i\tau_1})\Phi^*\\
\hat\gamma'(\tau)&=-ie^{-i\tau}\Phi-ie^{i\tau}\Phi^*\\
\hat\alpha_{\hat\gamma(\tau)}(\hat\gamma'(\tau))
&=
\frac{i}{4\pi}\int_\Sigma\tr\left(\left((e^{-i\tau}-e^{-i\tau_1})\Phi-(e^{i\tau}-e^{i\tau_1})\Phi^*\right)\wedge\left(-ie^{-i\tau}\Phi-ie^{i\tau}\Phi^*\right)\right)\\
&=(2-2\cos(\tau-\tau_1))\frac{1}{4\pi}\int_\Sigma\tr(\Phi\wedge\Phi^*).
\end{split}
\end{equation*}
Hence,
\[
X(\tau_2)=
\exp\left(\frac{1}{2\pi}\int_\Sigma\tr(\Phi\wedge\Phi^*)\int_{\tau_1}^{\tau_2}(\cos(\tau-\tau_1)-1)d\tau\right)=\exp\left(\frac{i}{4\pi}\big(\tau_2-\tau_1-\sin(\tau_2-\tau_1)\big)\mathcal W(f)\right).\]
But $X(\tau_2)$ is expressed in the trivialization $\hat\gamma(\tau_2)$. To obtain the holonomy, we must express it in the trivialization $\hat\gamma(\tau_1)$, so divide it by
$\Theta(d,f)$.
Overall, the 
 holonomy along $\gamma$ is
 \[\Hol(\mathcal D,\gamma)=\exp\left(\tfrac{i}{4\pi}\big(\tau_2-\tau_1-\sin(\tau_2-\tau_1)\big)\mathcal W(f)\right) \exp\left(\tfrac{i}{\pi}\mathcal V(f)\right)^{-1}\]
 as claimed.
\end{proof}

\subsection{Examples}
\begin{Exa}
In the case of a minimal surface, we can take $\lambda_1=1$ and $\lambda_2=-1$. Theorem \ref{thm:holev} then gives
\[\Hol(\mathcal D,\gamma)=\exp\left(\tfrac{i}{4}\operatorname{Area}(f)-\tfrac{i}{\pi}\mathcal V(f)\right)\]
as in \cite{Hi2}.
\end{Exa}
\begin{Exa}
Consider a CMC sphere in $\mathbb S^3.$
Its Willmore energy is $4\pi$ as the surface is totally umbilic. 
Let $\tau_1=0$ and $\tau_2=\theta\in(0,\pi)$. Then, the corresponding mean curvature is given by
$$H=i\frac{e^{i\theta}+1}{e^{i\theta}-1}=-\cot(\tfrac{\theta}{2})<0.$$
All flat connections on $\mathbb S^2$ are trivial, and the moduli space $\mathcal M(\mathbb S^2)$ consists
of a single point only. In particular, the holonomy of $\mathcal D$ along any curve $\gamma$ is trivial, i.e.,  $\Hol(\mathcal D,\gamma)=1$.
From Theorem \ref{thm:holev} we obtain
\[\mathcal V(f)=\pi(\theta-\sin(\theta))\mod 2\pi^2.\]
On the other hand, we compute
\[\pi(\theta-\sin(\theta))=4\pi\int_{r=0}^{\theta/2}\sin(r)^2dr
=4\pi\int_0^{\cot^{-1}(-H)}\sin(r)^2dr\\
=\operatorname{Vol}(B_H)\]
where $B_H$ is a spherical 3-ball with boundary mean curvature $H<0.$
\end{Exa}
\begin{Exa}
Let $r,s\in\R^{>0}$ satisfying $r^2+s^2=1,$ and consider the conformally parametrized  homogeneous CMC torus
\[f=f_{r,s}\colon \C/(r\Z+is\Z)\to\mathrm{SU}(2);\, [x+iy]\mapsto \left(
\begin{array}{cc}
 r e^{\frac{2 i \pi  x}{r}} & -se^{-\frac{2 i \pi  y}{s}} \\
 s e^{\frac{2 i \pi  y}{s}} & r e^{-\frac{2 i \pi  x}{r}} \\
\end{array}
\right).\]
It has mean curvature
$H=\frac{1-2 s^2}{2 r s},$
and the two Sym points are given by
\[\lambda_1=1\quad\text{and}\quad \lambda_2=\frac{(r+i s)^2}{(r-i s)^2}.\]
Using Theorem \ref{theorem:CMC-connection}, we compute (with $z=x+iy$)
\[\Phi=\frac{\pi (r+is)}{4 rs}\begin{pmatrix}
-1 & ie^{-2\pi i(\frac{x}{r}+\frac{y}{s})}\\
i e^{2\pi i(\frac{x}{r}+\frac{y}{s})}&1\end{pmatrix}dz\quad\mathrm{and}
\quad\Phi^*=\frac{\pi(r-is)}{4rs}\begin{pmatrix}
-1& -i e^{-2\pi i(\frac{x}{r}+\frac{y}{s})}\\
-i e^{2\pi i(\frac{x}{r}+\frac{y}{s})}&1\end{pmatrix}d\bar z.\]
The Willmore energy is
$$\mathcal W(f)=2i\int_{\Sigma}\tr(\Phi\wedge\Phi^*)=
2i\int_{\Sigma}\frac{\pi^2}{4r^2s^2}dz\wedge d\bar z=\frac{\pi^2}{rs}.$$
Consider the gauge
\[g(\xi)=\left(
\begin{array}{cc}
 \frac{i (\xi^2 -1) e^{-i \pi  \left(\frac{x}{r}+\frac{y}{s}\right)}}{4 \xi} & e^{-i \pi  \left(\frac{x}{r}+\frac{y}{s}\right)} \\
 -\frac{\left(\xi+1\right)^2 e^{i \pi  \left(\frac{x}{r}+\frac{y}{s}\right)}}{4 \xi} & \frac{i \left(\xi-1\right) e^{i \pi  \left(\frac{x}{r}+\frac{y}{s}\right)}}{\xi+1} \\
\end{array}
\right)\otimes \mu\]
parametrized by the spectral curve $\xi^2=\lambda$, 
for some $\xi$-independent double-valued function $\mu\colon \C/(r\Z+is\Z)\to\mathbb S^1$.
We obtain
\[D^\xi:=\nabla^{\xi^2}.g=\left(d+\left(
\begin{array}{cc}
 \frac{\pi  (r+i s)}{2 \xi r s} & 0 \\
 0 & -\frac{\pi  (r+i s)}{2 \xi r s} \\
\end{array}
\right)dz+\left(
\begin{array}{cc}
 -\frac{\pi  \xi (r-i s)}{2 r s} & 0 \\
 0 & \frac{\pi  \xi (r-i s)}{2 r s} \\
\end{array}
\right) d\bar z\right).\mu.\]
Let
$\xi_1=1$ and $\xi_2=\frac{r+i s}{r-i s}.$
Then
\[D^{\xi_1}=\left (d+
\pi i\begin{pmatrix}\frac{dx}{r}+\frac{dy}{s}&0\\0&-\frac{dx}{r}-\frac{dy}{s}\end{pmatrix}\right).\mu\]
and
\[D^{\xi_2}=\left(d+\pi i\begin{pmatrix}
-\frac{dx}{r}+\frac{dy}{s}&0\\0&\frac{dx}{r}-\frac{dy}{s}\end{pmatrix}\right).\mu\]
are  both trivial connections  on $\C/(r\Z+i s\Z)$, which are gauge equivalent by the gauge
\[g_1=\begin{pmatrix}e^{-2\pi i\frac{x}{r}}&0\\0&e^{2\pi i\frac{x}{r}}\end{pmatrix}.\]
Since $g_1$ depends only on one variable, there exists a suitable extension $\hat g_1$ of $g_1$ to $B$
such that $\hat g_1^*\omega\wedge \hat g_1^*\omega\wedge \hat g_1^*\omega=0.$
Therefore, we obtain by Proposition \ref{pro:chs}
\begin{align*}
\Theta(D^{\xi_1},g_1)&=\exp\left(\frac{i}{4\pi}\int_{\Sigma}\tr
\begin{pmatrix}-2\pi i \frac{dx}{r}&0\\0&2\pi i \frac{dx}{r}\end{pmatrix}
\wedge \begin{pmatrix}\pi i\frac{dx}{r}+\frac{dy}{s}&0\\0&-\pi i\frac{dx}{r}-\frac{dy}{s}\end{pmatrix}\right)=\exp\left(\frac{i}{4\pi}\int_{\Sigma}\frac{4\pi^2}{rs} dx\wedge dy\right)\\
&=\exp(\pi i)=-1.
\end{align*}
Let $X(\xi)$ be the parallel transport of $X(1)=1$ on the curve $D^{\xi}$ from $\xi_1$ to $\xi_2$ along the unit circle.
Write $D^{\xi}=d+A(\xi)$. Then
\[A'(\xi)=
\begin{pmatrix}
 \frac{-\pi  (r+i s)}{2 \xi^2 r s} & 0 \\
 0 & \frac{\pi  (r+i s)}{2 \xi^2 r s} \end{pmatrix}dz
 +\begin{pmatrix}
 -\frac{\pi  (r-i s)}{2 r s} & 0 \\
 0 & \frac{ \pi (r-i s)}{2 r s} \\
\end{pmatrix} d\bar z\]
\[\tr(A\wedge A')=\frac{2\pi^2 i}{r^2 s^2\xi} dx\wedge dy.\]
We therefore obtain
\[X(\xi_2)=\exp\left(\frac{-i}{4\pi}\int_{\xi_1}^{\xi_2}\int_{\Sigma}\tr(A\wedge A')d\xi\right)
=\exp\left(\frac{\pi}{2rs}\log(\xi_2)\right)
=\exp\left(\frac{\pi}{4rs}\log(\lambda_2)\right).\]
Overall, the holonomy is
\[\Hol(\mathcal D,\gamma)=\exp\left(\frac{\pi}{4 rs}\log(\lambda_2)-\pi i\right).\]
On the other, we have $\log(\lambda_2)=i\tau_2$ and
$\sin(\tau_2)=\Im(\lambda_2)=\Im(r+i s)^4=4 rs(r^2-s^2).$
Hence
\[\exp\left(\frac{i}{4\pi}(\tau_2-\sin(\tau_2))\mathcal W(f)\right)
=\exp\left(\frac{\pi}{4 rs}\log(\lambda_2)-\pi i (r^2-s^2)\right).\]
Theorem \ref{thm:holev} gives
\[\mathcal V(f_{r,s})=2 \pi^2 s^2\]
which can of course also  be verified by elementary means.
\end{Exa}

\section{Meromorphic potentials}\label{sec:dpw}
\subsection{The DPW method}\label{sec:dpwbasic}
We have seen that the construction of CMC surfaces is equivalent to the construction of its associated family of flat connections $\nabla^\lambda.$ Moreover, most geometric information is already contained in the family of gauge classes $[\nabla^\lambda].$
The DPW method \cite{DPW} is a recipe to produce CMC surfaces by taking lifts of $[\nabla^\lambda]$ to the space of flat connections of the form $d+ \eta,$%In this section, we briefly recall how the DPW method produces a family of flat connections $\nabla^{\lambda}$ which are unitary on the unit circle, hence, by Theorem \ref{theorem:CMC-connection}, a CMC conformal immersion into $\S^3$.
where $\eta=\eta(z,\lambda)$ is a meromorphic $\Lambda\sl(2,\C)$-valued 1-form, see \cite{HHSch, HHT1,HHTSD}.  More precisely,  $\eta$ --  the DPW potential -- is defined on a compact Riemann surface $\Sigma$ with poles at $p_1,\cdots,p_n$ and is given by
$$\eta(z,\lambda)=\sum_{k=-1}^{\infty}\eta_k(z)\lambda^k,$$
where $\eta_{-1}$ is nilpotent. 
Let $\Sigma^*=\Sigma\setminus\{p_1,\cdots,p_n\}$ and $\PhiDPW$ be a solution on
 of the ODE
$$d\PhiDPW=\PhiDPW\eta$$
on the universal cover $\wt{\Sigma^*}.$

Assume that the following Monodromy Problem is solved:
\begin{equation}
\label{eq:monodromy-problem}
\forall\gamma\in\pi_1(\Sigma),\quad\begin{cases}
\mathcal M(\PhiDPW,\gamma)\in\Lambda SU(2)\\
\mathcal M(\PhiDPW,\gamma)\mid_{\lambda=\lambda_1}=\mathcal M(\PhiDPW,\gamma)\mid_{\lambda=\lambda_2}=\Id,\end{cases}
\end{equation}
where $\lambda_1,\lambda_2\in\S^1$ are the Sym points.
Let $(F,B)\in\Lambda SU(2)\times\Lambda^+_{\R}\mathrm{SL}(2,\C)$ be the loop group Iwasawa decomposition of $\PhiDPW$.
Then $B$ is well-defined on $\Sigma^*$.
From
$dF\,B+F\, dB=FB\eta$
we obtain
\begin{equation}
\label{eq:FinvdF}
F^{-1}dF=B\eta B^{-1}-dB\, B^{-1}
\end{equation}
which is well-defined on $\Sigma^*$.
Write
$$F^{-1}dF=\sum_{k\in\Z}\alpha_k \lambda^k\quad\mathrm{and}\quad B=\sum_{k\in\N} B_k \lambda^k.$$
By Equation \eqref{eq:FinvdF}, $\alpha_k=0$ for $k\leq -2$.
Since $F\in\Lambda SU(2)$, $\alpha_{-k}=-\alpha_k^*$ so $\alpha_k=0$ for $k\geq 2$.
Consider the family of flat connections
$$\nabla^{\lambda}=d+F^{-1}dF=\nabla+\lambda^{-1}\Phi-\lambda\Phi^*$$
with $\nabla=d+\alpha_0$.
Then the Higgs field $\Phi$ is given by
$$\Phi=\alpha_{-1}=B_0\eta_{-1}B_0^{-1}$$
and is a nilpotent 1-form of type $(1,0)$.
Since the Monodromy Problem is solved, $F^{\lambda_1}$ and
$F^{\lambda_2}$ are well defined on $\Sigma^*$ and thus $\nabla^{\lambda_1}$ and
$\nabla^{\lambda_2}$ are trivial.
By Theorem \ref{theorem:CMC-connection}, $\nabla^{\lambda}$ defines a CMC surface $f.$
\subsection{Willmore energy in the DPW method}
The following proposition is proven in \cite{HHT1} in the minimal case. We generalize it here to the CMC case.
\begin{Pro}
\label{Pro:WillmoreDPW}
Let $\Omega\subset\Sigma^*$ be a compact domain. Then the Willmore energy of $f(\Omega)$ is
$$\mathcal W(f\mid_{\Omega})=-2i \int_{\partial\Omega}\tr(\eta_{-1}B_0^{-1}B_1).$$
\end{Pro}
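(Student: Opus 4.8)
The plan is to localize Theorem~\ref{theorem:CMC-connection}: on any compact $\Omega\subset\Sigma^*$ its computation gives $\mathcal W(f\mid_\Omega)=2i\int_\Omega\tr(\Phi\wedge\Phi^*)$, since the integrand is the pointwise Willmore density $(1+H^2)\,dA$. It therefore suffices to exhibit the $2$-form $\tr(\Phi\wedge\Phi^*)$ as exact on $\Sigma^*$, with the primitive dictated by the statement; that is, to prove the pointwise identity
\[\tr(\Phi\wedge\Phi^*)=-\,d\,\tr\!\left(\eta_{-1}B_0^{-1}B_1\right)\]
and then to apply Stokes' theorem on $\Omega$.

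First I would turn \eqref{eq:FinvdF} into recursion relations for the $B_k$. Writing $F^{-1}dF=\sum_k\alpha_k\lambda^k$ with $\alpha_{-1}=\Phi$, $\nabla=d+\alpha_0$ and $\alpha_1=-\Phi^*$, equation \eqref{eq:FinvdF} rearranges to $dB=B\eta-(F^{-1}dF)B$. Comparing the $\lambda^0$ and $\lambda^1$ coefficients yields
\[dB_0=B_0\eta_0+B_1\eta_{-1}-\alpha_0B_0-\alpha_{-1}B_1,\qquad dB_1=B_0\eta_1+B_1\eta_0+B_2\eta_{-1}-\alpha_1B_0-\alpha_0B_1-\alpha_{-1}B_2.\]
Setting $W=B_0^{-1}B_1$ and repeatedly using $\alpha_{-1}=B_0\eta_{-1}B_0^{-1}$ (so that $B_0^{-1}\alpha_{-1}B_k=\eta_{-1}B_0^{-1}B_k$), a direct computation of $dW=-B_0^{-1}(dB_0)W+B_0^{-1}dB_1$ shows that the two $\alpha_0$-contributions cancel, and the outcome is a sum in which every term is built from $\eta_{-1},\eta_0,\eta_1,W$ and $B_0^{-1}B_2$, together with the single remaining term $-B_0^{-1}\alpha_1B_0=B_0^{-1}\Phi^*B_0$.

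The decisive point is then a bidegree count. Since $\eta$ is meromorphic, each $\eta_k$ is a holomorphic $(1,0)$-form on $\Sigma^*$; in particular $d\eta_{-1}=0$, and every term of $dW$ listed above is of type $(1,0)$ except $B_0^{-1}\Phi^*B_0$, which is of type $(0,1)$. Hence in
\[d\,\tr(\eta_{-1}W)=\tr(d\eta_{-1}\,W)-\tr(\eta_{-1}\wedge dW)\]
the first term vanishes and, upon wedging against the $(1,0)$-form $\eta_{-1}$, all $(1,0)$-terms of $dW$ drop out, leaving only the $(0,1)$-contribution. Using $\eta_{-1}=B_0^{-1}\Phi B_0$ and the cyclic invariance of the trace then gives $\tr(\eta_{-1}\wedge B_0^{-1}\Phi^*B_0)=\tr(\Phi\wedge\Phi^*)$, which is exactly the desired identity. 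Integrating over $\Omega$ and applying Stokes' theorem completes the proof.

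The main obstacle is arranging the computation of $dW$ so that the two structurally different sources of unwanted terms disappear: the $\alpha_0$-contributions must cancel algebraically, while the $B_2$-contributions, although genuinely present in $dW$, are of type $(1,0)$ and are therefore annihilated only after the wedge with $\eta_{-1}$. Keeping the matrix orderings and the form degrees straight through these cancellations — rather than any analytic estimate — is the delicate part; once the holomorphicity of the $\eta_k$ is invoked, the whole expression collapses to $-\tr(\Phi\wedge\Phi^*)$.
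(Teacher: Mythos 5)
Your proposal is correct and is essentially the paper's argument: both localize the Willmore integrand as $2i\tr(\Phi\wedge\Phi^*)$, establish the exactness identity $\tr(\Phi\wedge\Phi^*)=-d\tr(\eta_{-1}B_0^{-1}B_1)$ using $\Phi=B_0\eta_{-1}B_0^{-1}$ together with the fact that the $(0,1)$-part of $F^{-1}dF$ is $-\overline{\partial}B\,B^{-1}$ (so that $\overline{\partial}(B_0^{-1}B_1)=B_0^{-1}\Phi^*B_0$), and conclude by Stokes. The paper reaches the same primitive slightly more directly, by reading off $\Phi^*=-\alpha_1=\partial_\lambda(\overline{\partial}B\,B^{-1})|_{\lambda=0}$ at once rather than via the full recursion for $dB_0,dB_1$ and a subsequent bidegree count, but the content is identical.
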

\begin{proof}
From Equation \eqref{eq:FinvdF}, we obtain, since $\alpha_1$ is of type $(0,1)$
$$\Phi^*=-\alpha_1=\frac{\partial}{\partial\lambda}(\overline{\partial} B\, B^{-1})\mid_{\lambda=0}
=\overline{\partial} B_1\,B_0^{-1}-\overline{\partial}B_0\, B_0^{-1}B_1 B_0^{-1}.$$
By Theorem \ref{theorem:CMC-connection}
\begin{align*}
\mathcal W(f\mid_{\Omega})&=2i\int_{\Omega}\tr(\Phi\wedge\Phi^*)=2i\int_{\Omega}\tr\left(B_0\eta_{-1}B_0^{-1}\wedge(
\overline{\partial} B_1\,B_0^{-1}-\overline{\partial}B_0\, B_0^{-1}B_1 B_0^{-1})\right)\\
&=2i\int_{\Omega}\tr\left(\eta_{-1}\wedge(B_0^{-1}\overline{\partial}B_1-B_0^{-1}\overline{\partial}B_0\, B_0^{-1}B_1)\right)=-2i\int_{\Omega}\tr \, d\left(\eta_{-1}B_0^{-1}B_1\right).
\end{align*}
The proposition then follows from Stokes.
\end{proof}
\begin{Def} A local regularizing gauge at $p_j$ is a  meromorphic
gauge $G_j=G_j(z,\lambda)$, defined for $z$ in a neighborhood of $p_j$ (with a pole at $p_j$)
and for $\lambda$ in the unit disk, such that $\eta. G_j$ extends holomorphically at $p_j$.
We say that $p_j$ is an apparent singularity, if such a local regularizing gauge exists.
\end{Def}

The main geometric relevance of the above definition is that the immersion $f$ extends smoothly through any apparent singularity. The following proposition shows that the Willmore energy can be computed in term of regularizing gauges:
\begin{Cor}
Assume that $p_1,\cdots,p_n$ are apparent singularities, with regularizing gauges
$G_1,\cdots,G_n$.
Write
$$G_j=\sum_{k=0}^{\infty} G_{j,k}\lambda^k.$$
Then the Willmore energy of $f$ is given by
$$\mathcal W(f)=4\pi\sum_{j=1}^n\Res_{p_j}\tr(\eta_{-1}G_{j,1}G_{j,0}^{-1}).$$
\end{Cor}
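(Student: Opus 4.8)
The strategy is to reduce the global Willmore integral from Proposition \ref{Pro:WillmoreDPW} to a sum of local residue contributions at the punctures $p_1,\dots,p_n$. The key point is that $\mathcal W(f)$ is computed by integrating an exact form $-2i\,d\tr(\eta_{-1}B_0^{-1}B_1)$ over $\Sigma$, but this quantity is only defined on $\Sigma^*=\Sigma\setminus\{p_1,\dots,p_n\}$ because $B$ is defined on $\Sigma^*$. Applying Proposition \ref{Pro:WillmoreDPW} to the domain $\Omega=\Sigma\setminus\bigcup_j D_j$, where $D_j$ is a small disk around $p_j$, gives
\[\mathcal W(f)=-2i\int_{\partial\Omega}\tr(\eta_{-1}B_0^{-1}B_1)=2i\sum_{j=1}^n\oint_{\partial D_j}\tr(\eta_{-1}B_0^{-1}B_1),\]
with the orientation of $\partial D_j$ as the boundary of the disk. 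So everything comes down to evaluating each boundary integral as $D_j$ shrinks to $p_j$.

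First I would use the regularizing gauge. Since $\eta.G_j$ extends holomorphically at $p_j$, the whole DPW machinery can be run with the regularized potential near $p_j$. The idea is that the Iwasawa factor $B$ and the regularizing gauge $G_j$ differ by something unitary, so the singular (polar) behavior of $B$ near $p_j$ is captured by $G_j$. More precisely, I would write the holomorphic frame $\PhiDPW$ in terms of the regularized data and track how $B_0$ and $B_1$ relate to $G_{j,0}$ and $G_{j,1}$. The cleanest route is to observe that applying the gauge $G_j$ changes $\eta$ to a holomorphic potential but leaves the gauge class $[\nabla^\lambda]$ unchanged, so $F$ is modified only by a $\lambda$-dependent unitary loop; comparing the Iwasawa decompositions of $\PhiDPW$ and $\PhiDPW G_j$ lets me express $\tr(\eta_{-1}B_0^{-1}B_1)$ near $p_j$ as a meromorphic expression plus a term that integrates to zero around the loop.

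The main obstacle, and the technical heart of the argument, is showing that the boundary integral $\oint_{\partial D_j}\tr(\eta_{-1}B_0^{-1}B_1)$ converges to $2\pi i\,\Res_{p_j}\tr(\eta_{-1}G_{j,1}G_{j,0}^{-1})$ as the disk shrinks. The difficulty is that $B_0$ and $B_1$ are only real-analytic (from the Iwasawa decomposition), not meromorphic, whereas $G_{j,0},G_{j,1}$ are meromorphic; so I cannot directly apply the residue theorem to the integrand as written. The resolution is to argue that the difference between $\tr(\eta_{-1}B_0^{-1}B_1)$ and the meromorphic model $\tr(\eta_{-1}G_{j,1}G_{j,0}^{-1})$ is a globally defined smooth $1$-form near $p_j$ whose contour integral vanishes in the limit (it has no residue because the non-meromorphic correction is bounded, or the unitary ambiguity contributes only an exact piece). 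Concretely, I expect that writing $B=U G_j$ with $U$ a unitary-valued gauge regular at $p_j$, and expanding to first order in $\lambda$, yields $\tr(\eta_{-1}B_0^{-1}B_1)=\tr(\eta_{-1}G_{j,1}G_{j,0}^{-1})+(\text{regular or exact terms})$, where the correction terms involve $U_0,U_1$ which are bounded near $p_j$ and therefore contribute nothing to the shrinking contour integral.

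Finally, once each local term is identified, I would assemble the pieces: each boundary loop contributes $2\pi i$ times the residue (accounting for the orientation factor from $\partial\Omega$ versus $\partial D_j$), and combining with the prefactor $2i$ gives
\[\mathcal W(f)=2i\sum_{j=1}^n 2\pi i\,\Res_{p_j}\tr(\eta_{-1}G_{j,1}G_{j,0}^{-1})\cdot(-1),\]
which after tracking signs yields the claimed $4\pi\sum_j\Res_{p_j}\tr(\eta_{-1}G_{j,1}G_{j,0}^{-1})$. The sign bookkeeping between the boundary orientation, the factor $i$, and the residue normalization $\oint=2\pi i\,\Res$ is the only remaining routine but error-prone step.
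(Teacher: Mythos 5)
Your global strategy is the one the paper intends (its proof simply defers to the minimal case in \cite{HHT1}): apply Proposition \ref{Pro:WillmoreDPW} to $\Sigma$ minus small disks around the $p_j$, let the disks shrink, and evaluate each boundary circle by the Residue Theorem after replacing the real-analytic integrand $\tr(\eta_{-1}B_0^{-1}B_1)$ by a meromorphic model built from the regularizing gauge. The reduction to boundary integrals, the role of $G_j$, the "bounded correction contributes nothing to the shrinking contour" argument, and the final bookkeeping $2i\cdot(-2\pi i)=4\pi$ are all as they should be.

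The one step that would fail as written is your relation between $B$ and $G_j$. You propose $B=UG_j$ with $U$ unitary and regular; the correct statement is that $BG_j$ extends regularly across $p_j$, i.e.\ $B=\wc{B}\,G_j^{-1}$ with $\wc{B}\in\Lambda^+$ bounded with bounded inverse. Indeed, since $\eta. G_j$ is holomorphic at $p_j$, the frame $\PhiDPW G_j$ extends holomorphically and invertibly there, and $\wc{B}=BG_j$ is (up to a $\lambda$-independent unitary factor) its positive Iwasawa factor; the unitary part of the decomposition is entirely absorbed into $F$ and never enters $B$. With the correct relation and $(G_j^{-1})_1=-G_{j,0}^{-1}G_{j,1}G_{j,0}^{-1}$ one gets
$$B_0^{-1}B_1=G_{j,0}\wc{B}_0^{-1}\wc{B}_1G_{j,0}^{-1}-G_{j,1}G_{j,0}^{-1},$$
so that $\tr(\eta_{-1}B_0^{-1}B_1)=\tr\big((G_{j,0}^{-1}\eta_{-1}G_{j,0})\,\wc{B}_0^{-1}\wc{B}_1\big)-\tr(\eta_{-1}G_{j,1}G_{j,0}^{-1})$: the first term is bounded precisely because $G_{j,0}^{-1}\eta_{-1}G_{j,0}=(\eta. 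G_j)_{-1}$ is holomorphic at $p_j$, and the second is the meromorphic term giving the residue with the sign that matches your final formula. With your version $B=UG_j$ the same expansion produces $+\tr(\eta_{-1}G_{j,0}^{-1}G_{j,1})$ (wrong order and wrong sign) plus a term involving $G_{j,0}\eta_{-1}G_{j,0}^{-1}$, which is conjugated the wrong way and need not be bounded, so the argument does not close. This is a fixable but genuine error at what you yourself identify as the technical heart of the proof.
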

\begin{proof} This is proved in the minimal case in \cite{HHT1} using the Residue Theorem.
The proof carries over to the CMC case through Proposition \ref{Pro:WillmoreDPW}.
\end{proof}

\subsection{Enclosed volume in terms of local regularizing gauges}
We would also like to express the enclosed volume in terms of local regularizing gauges. It appears that this is not as easy as for the Willmore energy and that global aspects are involved.
We shall prove the following result, assuming that the regularizing gauges have a particular form.
\begin{The}
\label{theorem:regularizing-volume}
Assume that the DPW potential $\eta$ has simple poles and the local regularizing gauges all have the form
\begin{equation}
\label{eq:Rj}
G_j(z,\lambda)=N_j(\lambda)\begin{pmatrix}z^{-k_j}&0\\0& z^{k_j}\end{pmatrix},
\end{equation}
where $N_j\in\Lambda_+ \mathrm{SL}(2,\C)$ only depends on $\lambda$, $z$ is a local coordinate in a neighborhood of $p_j$ such that $z(p_j)=0$ and $k_j\in\N^*$.
We also assume that there exists a constant gauge $h\in \mathrm{SL}(2,\C)$ such that
$\eta^{\lambda_2}=\eta^{\lambda_1}. h.$
Let
$$a_j=\big(N_j(\lambda_1)^{-1} h N_j(\lambda_2)\big)_{11}.$$
Then the holonomy of the connection $\mathcal D$ on the Chern-Simons line bundle $\mathcal L$ along $\gamma = [\nabla^{\lambda}]$ is
$$\Hol(\mathcal D,\gamma)=\exp\left(\sum_{j=1}^n  \int_{\lambda_1}^{\lambda_2}\tr\left(\Res_{p_j}\eta\frac{\partial G_j}{\partial\lambda} G_j^{-1}\right) d\lambda
-2 k_j \log(a_j) \right).$$
\end{The}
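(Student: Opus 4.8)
The plan is to compute the holonomy $\Hol(\mathcal D,\gamma)$ by working directly with the connection 1-form $\hat\alpha$ on the lifted curve, exactly as in the proof of Theorem \ref{thm:holev}, but now lifting $[\nabla^\lambda]$ not to the unitary family $\nabla^\lambda$ itself but to the meromorphic family $d+\eta^\lambda$. The key point is that $d+\eta^\lambda$ is defined on the punctured surface $\Sigma^*$, and the curve $\gamma$ fails to close up in two ways: first because $\eta^{\lambda_2}=\eta^{\lambda_1}.h$ for the constant gauge $h$, and second because $\eta$ has poles, so the naive integral $\int_\Sigma \tr(\eta\wedge\tfrac{\partial\eta}{\partial\lambda})$ does not converge and must be regularized near each $p_j$. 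My first step is therefore to write $\Hol(\mathcal D,\gamma)$ as a product of a ``bulk'' parallel-transport term along $d+\eta^\lambda$ over $\Sigma^*$ and a ``jump'' cocycle term $\Theta$ coming from the gauge $h$, precisely mirroring how $\Theta(d,f)=\exp(\tfrac{i}{\pi}\mathcal V(f))$ entered before.

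\textbf{Handling the poles via the regularizing gauges.}
The central technical step is to make sense of and evaluate the bulk term. On a neighborhood $U_j$ of each puncture I replace $\eta$ by its regularized version $\eta.G_j$, which extends holomorphically across $p_j$ by hypothesis; the point of the special form \eqref{eq:Rj} is that $G_j$ is a product of a purely $\lambda$-dependent factor $N_j(\lambda)$ and the explicit diagonal matrix $\mathrm{diag}(z^{-k_j},z^{k_j})$. I would excise small disks $D_\varepsilon(p_j)$, compute the convergent integral of the connection form over $\Sigma^*\setminus\bigcup_j D_\varepsilon(p_j)$, and use the transformation law of $\hat\alpha$ under the gauge $G_j$ (i.e. the analogue of equation \eqref{eq:Dtra} together with Lemma \ref{lem:dertheta}) to rewrite the integral over each annulus in terms of $G_j$. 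The divergent pieces should cancel against boundary contributions, leaving a finite residue term $\int_{\lambda_1}^{\lambda_2}\tr(\Res_{p_j}\eta\,\tfrac{\partial G_j}{\partial\lambda}G_j^{-1})\,d\lambda$, which is exactly the first sum in the claimed formula. The diagonal factor contributes the $\log$ of the off-diagonal monodromy $z^{\pm k_j}$, and tracking it around the puncture produces the $-2k_j\log(a_j)$ correction.

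\textbf{The $a_j$ term and the gauge $h$.}
To produce the $a_j=\big(N_j(\lambda_1)^{-1}hN_j(\lambda_2)\big)_{11}$ factor I would analyze how the two trivializations at $\lambda_1$ and $\lambda_2$ are identified. Since $\eta^{\lambda_2}=\eta^{\lambda_1}.h$, the closing-up gauge is $h$, but near each puncture this must be compatible with the local regularizing gauges $G_j(\cdot,\lambda_1)$ and $G_j(\cdot,\lambda_2)$; the combination $N_j(\lambda_1)^{-1}hN_j(\lambda_2)$ is the induced gauge in the regularized frame, and its $(1,1)$ entry governs how the diagonal $z^{\pm k_j}$ factor transforms, hence the coefficient $-2k_j$ in front of $\log(a_j)$. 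Concretely I expect to invoke Proposition \ref{pro:chs} for the cocycle $\Theta$ evaluated on $h$, splitting its logarithm into a Chern-Simons bulk piece (the residue integral) and a boundary piece localized at the punctures (the $a_j$ term).

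\textbf{Main obstacle.}
The hard part will be the careful bookkeeping of the regularization near the punctures: making the excised-disk limit rigorous, verifying that the genuinely divergent terms cancel, and correctly matching the branch of $\log$ so that the monodromy of the diagonal factor $\mathrm{diag}(z^{-k_j},z^{k_j})$ yields exactly $-2k_j\log(a_j)$ rather than some other multiple. In particular, because $\mathcal D$ is only a connection on a line bundle over the \emph{gauge classes}, I must confirm that the lift through the non-unitary meromorphic potential $d+\eta^\lambda$ computes the same holonomy as the unitary lift of Theorem \ref{thm:holev} — that is, that the extra complex gauge freedom and the pole structure do not change the gauge class $[\nabla^\lambda]$ traced out, so that both computations genuinely represent $\Hol(\mathcal D,\gamma)$.
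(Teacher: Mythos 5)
Your outline correctly identifies the two sources of non-closure (the constant gauge $h$ and the poles of $\eta$) and correctly predicts that the answer localizes at the punctures, but there is a genuine gap at the heart of the plan: the lift $\tau\mapsto d+\eta^{e^{i\tau}}$ is not a curve in the space of connections on the closed surface $\Sigma$, so neither the connection form $\hat\alpha$ nor the cocycle $\Theta$ (which requires extending the connection and the gauge to a bounding $3$-manifold $B$) is defined for it. The paper's essential technical device, which is absent from your proposal, is the construction of a \emph{globally smooth} gauge $\mathcal G$ that interpolates, via cutoff functions on three nested annuli, between $G_j$ near $p_j$ and the identity away from the punctures; the resulting potential $\wt\eta=\eta.\mathcal G$ is an honest smooth connection on all of $\Sigma$ in the gauge class $[\nabla^\lambda]$, and only then can the holonomy be written as the three-term expression \eqref{eq:logHol}. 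Note also that your proposed ``convergent bulk integral over $\Sigma^*\setminus\bigcup_j D_\varepsilon(p_j)$'' is in fact identically zero: $\eta$ and $\partial_\lambda\eta$ are both of type $(1,0)$ in $z$, so $\tr(\eta\wedge\partial_\lambda\eta)=0$ pointwise. All of the content of the formula is concentrated in the transition regions where the interpolating gauge is non-holomorphic, which is exactly the part your sketch leaves to ``bookkeeping.''

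Concretely, in the paper the residue term $\int_{\lambda_1}^{\lambda_2}\tr(\Res_{p_j}\eta\,\partial_\lambda G_j\,G_j^{-1})\,d\lambda$ arises in the outer annulus after a cancellation between the $\tr(\wt\eta\wedge\partial_\lambda\wt\eta)$ term and the $\tr(\wt\eta^{\lambda_1}\wedge dg\,g^{-1})$ term, while the $-2k_j\log(a_j)$ term arises in the inner annulus from the genuinely three-dimensional Wess--Zumino integral $\int_B\tr(\wh g^{-1}d\wh g)^{\wedge 3}$ of an explicit extension $\wh g$ of $g=\mathcal G_{\lambda_1}^{-1}h\,\mathcal G_{\lambda_2}$ to $B$, after its $(a_j^2-a_j^{-2})$ part cancels against the two-dimensional cross term (this uses Lemma \ref{lemma:M1M2}, i.e.\ that $\int[M_r,M_\theta]$ picks up $2\pi k_j i$ from the winding of $\mathrm{diag}(e^{-ik_j\theta},e^{ik_j\theta})$). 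Your attribution of $-2k_j\log(a_j)$ to ``the monodromy of the diagonal factor'' is the right intuition, and Lemma \ref{lemma:preliminary} (upper triangularity of $N_j(\lambda_1)^{-1}hN_j(\lambda_2)$, which you do not mention but which is needed for $g$ to be smooth at $p_j$) is also required. To turn your sketch into a proof you would need to supply the global smooth interpolation, the explicit extension of the gauge to $B$, and the computation of the resulting $3$-form integral.
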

Theorem \ref{theorem:regularizing-volume} will be proven in the Sections
\ref{section:proof-preliminaries} to \ref{section:inner} below and
we give an example in Section \ref{section:sphere-revisited}.
\begin{Rem} In the case of DPW potentials with simple poles, \eqref{eq:Rj} is the usual form of regularizing gauges.
In particular, Theorem \ref{theorem:regularizing-volume} applies to the Lawson-type CMC surfaces constructed in \cite{HHT2}, see Section \ref{section:Lawson}.
\end{Rem}
\subsubsection{Preliminaries}
\label{section:proof-preliminaries}
\begin{Lem}\
\label{lemma:preliminary}
With the notations introduced above we have
\begin{enumerate}
\item In a neighborhood of $p_j$
$$\eta. N_j=N_j^{-1}\eta N_j=\begin{pmatrix} k_j z^{-1}+O(z^0) & O(z^{-1})\\O(z^{2 k_j})&-k_j z^{-1}+O(z^0)\end{pmatrix} dz.$$

\item The constant matrix $N_j(\lambda_1)^{-1} h N_j(\lambda_2)$ is upper triangular.
\end{enumerate}
\end{Lem}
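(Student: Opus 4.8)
The plan is to prove the two statements of Lemma \ref{lemma:preliminary} as direct consequences of the structural assumptions on the regularizing gauges, treating the two parts separately since they involve different mechanisms.

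For part (1), I would start from the defining property of a regularizing gauge, namely that $\eta.G_j$ extends holomorphically at $p_j$. Writing $G_j = N_j(\lambda)\,\mathrm{diag}(z^{-k_j},z^{k_j})$ as in \eqref{eq:Rj}, I would first conjugate by the diagonal part. Since $N_j$ depends only on $\lambda$ (not on $z$), we have
\[
\eta.N_j = N_j^{-1}\eta\, N_j,
\]
because the $d(N_j)N_j^{-1}$ term vanishes. So the task reduces to understanding $(\eta.G_j)$ and then conjugating back by the diagonal factor $D_j:=\mathrm{diag}(z^{-k_j},z^{k_j})$. The gauge action of the $z$-dependent diagonal part contributes both a conjugation and a $d D_j\, D_j^{-1}$ term; the latter equals $\mathrm{diag}(-k_j z^{-1}, k_j z^{-1})\,dz$, which accounts precisely for the leading diagonal entries $\pm k_j z^{-1}$ in the claimed matrix. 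The key point is that since $\eta.G_j = D_j^{-1}(N_j^{-1}\eta N_j)D_j + D_j^{-1}dD_j$ is holomorphic (no pole) at $p_j$, I can solve for $N_j^{-1}\eta N_j = \eta.N_j$ and read off its entry-wise pole orders by multiplying the holomorphic matrix $\eta.G_j$ back by $D_j$ on the left and $D_j^{-1}$ on the right. Conjugation by $D_j$ scales the $(1,2)$ entry by $z^{-2k_j}$ and the $(2,1)$ entry by $z^{2k_j}$; since $\eta$ has a simple pole, holomorphicity of $\eta.G_j$ forces the bounds $O(z^{-1})$ on the off-diagonal $(1,2)$ slot and $O(z^{2k_j})$ on the $(2,1)$ slot, while the diagonal picks up the $\pm k_j z^{-1}+O(z^0)$ contribution from $dD_j\,D_j^{-1}$ together with the holomorphic diagonal of $\eta.G_j$. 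This is a careful but routine entry-by-entry bookkeeping of pole orders under the two conjugations.

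For part (2), I would use the assumption $\eta^{\lambda_2} = \eta^{\lambda_1}.h$ for a constant gauge $h\in\mathrm{SL}(2,\C)$, together with the observation from part (1) that $\eta.N_j = N_j^{-1}\eta N_j$ has its $(2,1)$ entry of order $O(z^{2k_j})$, i.e.\ vanishing to high order, whereas the $(1,2)$ entry is only $O(z^{-1})$. The matrix $M_j:=N_j(\lambda_1)^{-1}h\,N_j(\lambda_2)$ is the gauge relating $\eta^{\lambda_1}.N_j(\lambda_1)$ to $\eta^{\lambda_2}.N_j(\lambda_2)$ up to the diagonal factors, so I would track how the relation $\eta^{\lambda_2}=\eta^{\lambda_1}.h$ transforms after applying the regularizing gauges at each Sym point. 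Concretely, since $\eta.G_j$ is holomorphic and the regularization is by $G_j = N_j D_j$, the constant transition matrix between the two regularized potentials at $\lambda_1,\lambda_2$ must preserve the holomorphic framing, which by the asymptotic structure of part (1) can only happen if $M_j$ is upper triangular—a lower-triangular contribution would reintroduce a pole of the wrong order in the $(2,1)$ slot.

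The main obstacle I anticipate is part (2): establishing that $M_j$ is \emph{upper} triangular (rather than merely triangular or diagonal) requires carefully matching the asymptotic expansions of the two regularized potentials near $p_j$ and showing that any nonzero lower-left entry of $M_j$ would be incompatible with both potentials extending holomorphically after regularization. The bookkeeping in part (1) is conceptually straightforward but must be done with precise attention to how the powers $z^{\pm k_j}$ interact with the simple pole of $\eta$; getting the exponent $2k_j$ on the $(2,1)$ entry exactly right is where sign and power errors are most likely to creep in.
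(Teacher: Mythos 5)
Your part (1) is correct and is essentially identical to the paper's argument: write $\wc\eta=\eta.N_j=N_j^{-1}\eta N_j$ (constant in $z$, hence still a simple pole), gauge by $\mathrm{diag}(z^{-k_j},z^{k_j})$, and read off the entry-wise pole orders from holomorphicity of $\eta.G_j$. No issues there.

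Part (2) has a genuine gap. Writing $\wc h=N_j(\lambda_1)^{-1}hN_j(\lambda_2)=\minimatrix{a&b\\c&d}$ and $\wc\eta^{\lambda}=\minimatrix{\alpha_\lambda&\beta_\lambda\\\gamma_\lambda&-\alpha_\lambda}dz$, your mechanism --- ``a lower-triangular contribution would reintroduce a pole of the wrong order in the $(2,1)$ slot'' --- amounts to requiring that the $(2,1)$ entry of $\wc h^{-1}\wc\eta^{\lambda_1}\wc h$, namely $-2ac\,\alpha_{\lambda_1}-c^2\beta_{\lambda_1}+a^2\gamma_{\lambda_1}$, be pole-free. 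At the level of residues this gives the single equation $-2ac\,k_j-c^2\Res_{p_j}\beta_{\lambda_1}=0$, which does \emph{not} force $c=0$: since part (1) only gives $\beta_{\lambda_1}=O(z^{-1})$, the residue $\Res_{p_j}\beta_{\lambda_1}$ is generically nonzero (cf.\ Remark \ref{remark:N_j}), and then $c=-2ak_j/\Res_{p_j}\beta_{\lambda_1}$ is a nontrivial solution. You need a second input. The paper takes the residue of the $(1,1)$ entry as well, $(ad+bc)k_j+cd\Res_{p_j}\beta_{\lambda_1}=k_j$, and eliminates $\Res_{p_j}\beta_{\lambda_1}$ between the two equations using $ad-bc=1$ to get $2ck_j=0$. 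An equivalent clean fix: the residue relation $\wc h^{-1}(\Res\wc\eta^{\lambda_1})\wc h=\Res\wc\eta^{\lambda_2}$ shows $\wc h$ carries the $k_j$-eigenline of $\Res\wc\eta^{\lambda_2}$ to that of $\Res\wc\eta^{\lambda_1}$; both eigenlines are spanned by $e_1$ because the residues are upper triangular with distinct eigenvalues $\pm k_j$, whence $\wc h e_1\parallel e_1$ and $c=0$. Either way, the conclusion requires using the $(1,1)$ (diagonal) residue information in addition to the $(2,1)$ condition, and that step is missing from your sketch.
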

\begin{Rem}
\label{remark:N_j}
In particular
$$(\Res_{p_j}\eta)N_j=\begin{pmatrix}k_j& \ast\\0&-k_j\end{pmatrix} N_j.$$
Thus the first column of $N_j$ is an eigenvector of $\Res_{p_j}\eta$ with eigenvalue $k_j$, which is useful when constructing the gauge $G_j$.
\end{Rem}

\begin{proof}
Write 
$$\wc\eta=\eta. N_j=\begin{pmatrix}\alpha&\beta\\ \gamma&-\alpha\end{pmatrix} dz$$
with $\alpha,\beta,\gamma$ having at most poles at $z=0$. 
Then
$$\eta. G_j=\wc\eta. \begin{pmatrix}z^{-k_j}&0\\0&z^{k_j}\end{pmatrix}
=\begin{pmatrix}\alpha&\beta z^{2k_j}\\\gamma z^{-2k_j}&-\alpha\end{pmatrix}
+\begin{pmatrix}-k_j&0\\0&k_j\end{pmatrix}\frac{dz}{z}.$$
Since $\eta. G_j$ is holomorphic at $p_j$, the first point follows.
Let
$$\wc h=N_j(\lambda_1)^{-1} h N_j(\lambda_2)=\begin{pmatrix}a_j&b_j\\c_j&d_j\end{pmatrix} \in \mathrm{SL} (2, \C).$$
We have
$$\wc\eta^{\lambda_1}. \wc h=\wc\eta^{\lambda_2}$$
which gives (omitting the indices $j$)
$$\begin{pmatrix}
(ad+bc)\alpha_{\lambda_1}+cd\beta_{\lambda_1}-ab\gamma_{\lambda_1}
& 2 bd\alpha_{\lambda_1}+d^2\beta_{\lambda_1}-b^2\gamma_{\lambda_1}\\
-2 ac\alpha_{\lambda_1}-c^2\beta_{\lambda_1}+a^2\gamma_{\lambda_1}
& -(ad+bc)\alpha_{\lambda_1}-cd\beta_{\lambda_1}+ab\gamma_{\lambda_1}
\end{pmatrix}
=\begin{pmatrix}
\alpha_{\lambda_2}&\beta_{\lambda_2}\\ \gamma_{\lambda_2}&-\alpha_{\lambda_2}
\end{pmatrix}.$$
Taking the residue at $z=0$ of the coefficients $(1,1)$ and $(2,1)$ gives

$$\begin{cases}
(ad+bc) k+ cd \Res\beta_{\lambda_1}=k\\
-2ac k-c^2\Res\beta_{\lambda_1}=0.\end{cases}$$
Elimination of $\Res\beta_{\lambda_1}$ gives $-2ck=0$, so $c=0$.
\end{proof}
\subsubsection{Construction of a global regularizing gauge}
\newcommand{\GG}{{\mathcal G}}
\label{section:global-gauge}
Using cutoff functions, we construct a global, smooth gauge $\GG$ on $\Sigma$ which agrees with the local meromorphic gauge $G_j$ close to $p_j$ and is equal to identity far from $p_1,\cdots,p_n$.
Fixe some small $\varepsilon>0$.
For $\ell=1,2,3$,
let $\psi_\ell:\R\to[0,1]$ be smooth cutoff functions such that
$$
\psi_\ell(r)=\begin{cases}
0\quad\text{if $r\leq \ell\varepsilon$}\\
1\quad\text{if $r\geq (\ell+1)\varepsilon$}.
\end{cases}
$$
Let $H:[0,1]\times\S^1\to \mathrm{SL}(2,\C)$ be a smooth homotopy from
$\begin{pmatrix}e^{-i\theta} & 0\\0& e^{i \theta}\end{pmatrix}$ to $\Id$.
This homotopy exists because $\mathrm{SL}(2,\C)$ is simply connected. Let $\wt N_j:[0,1]\to\Lambda_+ \mathrm{SL}(2,\C)$ be a smooth path from $N_j$ to $\Id$.
Let $D(p_j,r)$ denote the disk $|z|<r$ in the local coordinate $z$ near $p_j$.
We define $\GG_{1}$, $\GG_{2}$, $\GG_{3}$ and $\GG$ in $D(p_j,4\varepsilon)$ 
using the local coordinate $z=r e^{i\theta}$ by
\begin{align*}
&\GG_{1}(r,\theta)=H(\psi_1(r),e^{i k_j\theta})=\begin{cases}
\begin{pmatrix}e^{-i k_j\theta} & 0\\0& e^{i k_j \theta}\end{pmatrix}\quad\text{ if $r\leq \varepsilon$}\\
\Id\quad\text{ if $r\geq 2\varepsilon$}\end{cases}\\
&\GG_{2}(r)=\begin{pmatrix} \rho(r)^{-k_j}&0\\0&\rho(r)^{k_j}\end{pmatrix}
\quad\text{ with }\quad
\rho(r)=r^{1-\psi_2(r)}
=\begin{cases}
r\quad\text{ if $r\leq 2\varepsilon$}\\
1\quad\text{ if $r\geq 3\varepsilon$}\end{cases}\\
&\GG_{3}(r,\lambda)=\wt N_j(\psi_3(r),\lambda)=
\begin{cases} N_j(\lambda)\quad\text{ if $r\leq 3\varepsilon$}\\
\Id\quad\text{ if $r\geq 4\varepsilon$}\end{cases}\\
&\GG=\GG_{3} \GG_{2} \GG_{1},
\end{align*}
which offers a smooth transition from $G_j$ to $\Id$. More precisely,
$$\GG(z,\lambda)=\begin{cases}
\Id\quad\text{ if $r\geq 4\varepsilon$}\\
\GG_{3}(r,\lambda)\quad\text{ if $3\varepsilon\leq r\leq 4\varepsilon$}\\
N_j(\lambda)
\GG_{2}(r)\quad\text{ if $2\varepsilon\leq r\leq 3\varepsilon$}\\
N_j(\lambda)
\begin{pmatrix} r^{-k_j}& 0\\0& r^{k_j}\end{pmatrix}
 \GG_{1}(r,\theta)
\quad\text{ if $\varepsilon\leq r\leq 2\varepsilon$}\\
N_j(\lambda)
\begin{pmatrix} r^{-k_j}& 0\\0& r^{k_j}\end{pmatrix}
\begin{pmatrix}e^{-i k_j\theta} & 0\\0& e^{i k_j\theta}\end{pmatrix}

=G_j (z,\lambda)\quad\text{ if $r\leq \varepsilon$}.\\
\end{cases}$$
Finally, we define $\GG=\Id$ on $\Sigma\setminus\bigcup_{j=1}^n D(p_j,4\varepsilon)$.
The gauge $\GG$ is smooth on $\Sigma\setminus\{p_1,\cdots,p_n\}$.
\begin{Rem}
The definition of the gauge $\GG$ depends on the positive number $\varepsilon$.
Eventually, we let $\varepsilon\to 0$, which allows us to neglect many terms.
\end{Rem}
\subsubsection{Choice of the gauge $g$}
Let
$$\wt\eta=\eta. \GG$$
which is a smooth potential on $\Sigma$, holomorphic in $D(p_j,\varepsilon)$ and
$\Sigma\setminus\bigcup_{i=1}^n D(p_j,4\varepsilon)$ but not holomorphic in the transition annuli.
 As in Theorem \ref{thm:holev},
the holonomy is determined by the integral of the connection 1-form  between $\lambda_1$ and $\lambda_2$ along the unit circle and the Chern-Simons terms caused by the different trivializations 
used at $\lambda_1$ and $\lambda_2$.
Using Proposition \ref{pro:chs}, the holonomy $\Hol(\mathcal D,\gamma)$ is the exponential of
\begin{equation}
\label{eq:logHol}
\frac{1}{4\pi i}\int_{\lambda_1}^{\lambda_2}\int_{\Sigma}\tr\left(\wt\eta\wedge\frac{\partial\wt\eta}{\partial\lambda}\right)d\lambda
+\frac{1}{4\pi i}\int_{\Sigma}\tr\left(\wt\eta^{\lambda_1}\wedge dg\,g^{-1}\right)
-\frac{1}{12\pi i}\int_B\tr\left(\wh{g}^{-1}d\wh{g}\right)^{\wedge 3},
\end{equation}where $g$ is an arbitrary smooth gauge between $\wt\eta^{\lambda_1}$ and
$\wt\eta^{\lambda_2}$ and $\wh g$ is its extension to $B$. Recall that $h$ is a (constant) gauge between $\eta^{\lambda_1}$ and $\eta^{\lambda_2}$.
We take
$$g=\GG_{\lambda_1}^{-1} h \GG_{\lambda_2}$$
so that
$$\wt\eta^{\lambda_1}. g=\eta^{\lambda_1}. \GG_{\lambda_1}\GG_{\lambda_1}^{-1} h \GG_{\lambda_2}=\wt\eta^{\lambda_2}.$$
To see that $g$ is smooth on $\Sigma$, we use Lemma \ref{lemma:preliminary} to write
$$\wc h=N_j(\lambda_1)^{-1}h N_j(\lambda_2) =\begin{pmatrix}a_j&b_j\\0&a_j^{-1}\end{pmatrix}.$$
Then we have in the disk $D(p_j,\varepsilon)$ that
$$g=\begin{pmatrix} a_j& b_jz^{2k_j}\\0& a_j^{-1}\end{pmatrix},$$
which is holomorphic at $p_j$.
\subsubsection{Extension of $g$ to $B$}
Let $B$ be a 3-manifold with $\partial B=\Sigma$.
By compactness, $\Sigma$ has an embedded tubular neighborhood in $B$ which we can identify with $\Sigma\times[0,1]$ via a diffeomorphism.
Let $\chi:\R\to[0,1]$ be a smooth cutoff function such that
$$\chi(t)=\begin{cases}
1\text{ if $t\leq 0$}\\
0\text{ if $t\geq 1$.}\end{cases}$$
We extend $g$ to a smooth map $\wh g:B\to \mathrm{SL}(2,\C)$ as follows :
\begin{itemize}
\item In $N\setminus (\Sigma\times[0,1])$, we take $\wh g=\Id$.
\item In $(\Sigma\setminus\bigcup_{j=1}^n D(p_j,\varepsilon))\times[0,1]$, we define
$$\wh g(\cdot,t)=\wt h(\chi(t))$$
where $\wt h:[0,1]\to \mathrm{SL}(2,\C)$ is a smooth path from $\Id$ to $h$.
\item In the cylinder $D(p_j,3\varepsilon)\times[0,1]$ we define
$$\wh g(z,t)=\GG_{1}(r,\theta)^{-1} \GG_{2}(r)^{-1}\begin{pmatrix}a_j^{\chi(t)}&\chi(t)b_j\\0&a_j^{-\chi(t)}\end{pmatrix}\GG_{2}(r) \GG_{1}(r,\theta)$$
so that $\wh g(z,0)=g(z)$ and $\wh g(z,1)=\Id$. Note that in the cylinder
$D(p_j,\varepsilon)\times[0,1]$, we have
$$\wh g(z,t)=\begin{pmatrix}a_j^{\chi(t)}&\chi(t)b_j z^{2k_j}\\0&a_j^{-\chi(t)}\end{pmatrix}$$
which is smooth at $z=0$.
\item It remains to define $\wh g$ in the domain $3\varepsilon<r<4\varepsilon$, $0<t<1$.
On the boundary of this domain, $\wh g$ is already defined and only depends on $r,t$ (not $\theta$).
Since $\mathrm{SL}(2,\C)$ is simply connected, we can extend this map to the inside of the rectangle $[3\varepsilon,4\varepsilon]\times[0,1]$ as a smooth map depending only on $(r,t)$.
(The rectangle is homeomorphic to the disk so we can extend $\wh g$ as a continuous map, and then we smooth it.)
\end{itemize}
On $\Sigma\setminus\bigcup_{j=1}^n D(p_j,4\varepsilon)$, we have
$\wt\eta=\eta$ which is holomorphic, $g=h$ (a constant) and $\wh g=\wt h(\chi(t))$ only depends on $t$,
so the integrant in each term of \eqref{eq:logHol} vanish.
In $D(p_j,\varepsilon)$, $\wt\eta$, $g$ and $\wh g$ are holomorphic in $z$ so again,
the integrant in each term of \eqref{eq:logHol} vanish.
Therefore, we only have to evaluate the integrals in the transition annuli.
We do this in the next three sections.
\subsubsection{Outer transition annulus}
\label{section:outer}
In this section we fix a singularity $p_j$ and evaluate the integral \eqref{eq:logHol} in the annulus $3\varepsilon\leq r\leq 4\varepsilon$.
\begin{Pro}
\label{prop:outer}
The contribution of the annulus $3\varepsilon\leq r\leq 4\varepsilon$ to
\eqref{eq:logHol} is equal to
$$\int_{\lambda_1}^{\lambda_2}\tr\left(\Res_{p_j}\eta\pd{G_j}{\lambda}G_j^{-1}\right)d\lambda
+O(\varepsilon).$$
\end{Pro}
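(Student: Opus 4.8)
The plan is to evaluate each of the three terms of \eqref{eq:logHol} on the region $3\varepsilon\le r\le 4\varepsilon$ and to show that only the first produces an $O(1)$ contribution, equal to the asserted integral. First I would record the simplifications available in the outer annulus: there $\GG=\GG_3(r,\lambda)$ (the factors $\GG_1,\GG_2$ are already the identity), so $\GG$ depends only on $(r,\lambda)$ and interpolates between $N_j$ at $r=3\varepsilon$ and $\Id$ at $r=4\varepsilon$. Writing $z=re^{i\theta}$ and $R=R(\lambda)=\Res_{p_j}\eta$, I would replace $\eta$ by its polar part $R\,\tfrac{dz}{z}=R(\tfrac{dr}{r}+i\,d\theta)$; the holomorphic remainder is $O(1)$ but its only $\theta$-independent contribution enters through its $d\theta$-component, which is $O(r)=O(\varepsilon)$, so it costs $O(\varepsilon)$ after integration. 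Thus $\wt\eta=\eta.\GG=P\,dr+Q\,d\theta$ with $Q=i\,\GG^{-1}R\GG$ and $P=\tfrac1r\GG^{-1}R\GG+\GG^{-1}\partial_r\GG$, both $\theta$-independent, so the $\theta$-integral contributes an overall factor $2\pi$.

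For the first term I would compute $\tr(\wt\eta\wedge\partial_\lambda\wt\eta)=\tr(P\,\partial_\lambda Q-Q\,\partial_\lambda P)\,dr\wedge d\theta$ and simplify it using cyclicity of the trace together with the Maurer--Cartan-type identities obtained by cross-differentiating the logarithmic derivatives of $\GG$ (in right-hand form $\partial_r\hat W-\partial_\lambda\hat V=[\hat V,\hat W]$, with $\hat V=\partial_r\GG\,\GG^{-1}$ and $\hat W=\partial_\lambda\GG\,\GG^{-1}$). The point is that the singular $\tfrac1r$-terms cancel by antisymmetry and the terms carrying $\partial_\lambda R$ cancel against one another, leaving the pure total-derivative form $\tr(P\,\partial_\lambda Q-Q\,\partial_\lambda P)=i\big(-2\,\partial_r\tr(R\hat W)+\partial_\lambda\tr(R\hat V)\big)$. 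Integrating, the $\partial_r$-piece collapses to the boundary values at $r=4\varepsilon$ (where $\GG=\Id$, hence $\hat W=0$) and $r=3\varepsilon$ (where $\GG=N_j$, hence $\hat W=\partial_\lambda N_j\,N_j^{-1}$); since $G_j=N_j\,\mathrm{diag}(z^{-k_j},z^{k_j})$ and the diagonal factor is $\lambda$-independent, $\tr(R\,\partial_\lambda N_j\,N_j^{-1})=\tr(\Res_{p_j}\eta\,\pd{G_j}{\lambda}G_j^{-1})$, which produces exactly the claimed $\int_{\lambda_1}^{\lambda_2}\tr(\Res_{p_j}\eta\,\pd{G_j}{\lambda}G_j^{-1})\,d\lambda$. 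The $\partial_\lambda$-piece survives as a leftover $r$-integral, which I call $B$.

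Next I would treat the remaining two terms. For the second, in the annulus $g=\GG_{\lambda_1}^{-1}h\,\GG_{\lambda_2}$ depends only on $r$, so only the $d\theta$-component $i\,\GG_{\lambda_1}^{-1}R^{\lambda_1}\GG_{\lambda_1}$ of $\wt\eta^{\lambda_1}$ contributes; expanding $\partial_r g\,g^{-1}$ and using the hypothesis $\eta^{\lambda_2}=\eta^{\lambda_1}.h$ in the residue form $R^{\lambda_2}=h^{-1}R^{\lambda_1}h$, the second term reduces to $\tfrac12\int\big(\tr(R^{\lambda_1}\hat V_{\lambda_1})-\tr(R^{\lambda_2}\hat V_{\lambda_2})\big)\,dr$, which is precisely $-B$, so it cancels the leftover exactly. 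For the third term I would use that, on the cylinder $\{3\varepsilon\le r\le 4\varepsilon\}\times[0,1]$, the extension $\wh g$ was constructed to depend only on $(r,t)$, so $\wh g^{-1}d\wh g$ is valued in the two differentials $dr,dt$ alone and its wedge cube vanishes identically. Collecting, the outer annulus contributes the asserted main integral plus an $O(\varepsilon)$ error.

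The main obstacle is the bookkeeping of orders: several naively $O(1)$ quantities --- the $\tfrac1r$ pole of $\eta$, the term $\tr(\partial_\lambda R\,\hat V)$ coming from the $\lambda$-dependence of the residue, and the leftover $B$ --- are individually not negligible, and the heart of the argument is to show they cancel or organize into total derivatives. The cleanest route is to massage $\tr(\wt\eta\wedge\partial_\lambda\wt\eta)$ into the pure total-derivative form \emph{before} integrating, and to reserve the non-exact remainder $B$ for cancellation against the second term via the conjugation relation $R^{\lambda_2}=h^{-1}R^{\lambda_1}h$. Controlling the holomorphic remainder of $\eta$, and the $\theta$-dependent pieces it generates, so that it really costs only $O(\varepsilon)$ rather than $O(1)$, is the other point needing care; here the fact that every surviving contribution comes from the $\theta$-independent Fourier mode is what rescues the estimate.
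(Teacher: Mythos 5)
Your proposal is correct and follows essentially the same route as the paper's proof: the same reduction to the polar part of $\eta$ with an $O(\varepsilon)$ error, the same reorganization of $\tr(\wt\eta\wedge\partial_\lambda\wt\eta)$ into the total derivatives $\partial_\lambda\tr\bigl(R\,\partial_r\GG\,\GG^{-1}\bigr)-2\,\partial_r\tr\bigl(R\,\partial_\lambda\GG\,\GG^{-1}\bigr)$, the extraction of the main term from the $r$-boundary values, the cancellation of the leftover $\partial_\lambda$-piece against the second term via $R^{\lambda_2}=h^{-1}R^{\lambda_1}h$, and the vanishing of the cubic term because $\wh g$ depends only on $(r,t)$. (Only a wording quibble: the $\partial_\lambda R$ term does not cancel but is absorbed into $\partial_\lambda\tr(R\,\hat V)$, exactly as your final displayed identity already shows.)
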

\begin{proof}
To ease notations, we do not write the dependence of objects on $p_j$, e.g., we write $N=N_j$.
In the annulus $3\varepsilon\leq r\leq 4\varepsilon$, we have $\GG(z,\lambda)=\wt N(\psi_3(r),\lambda)$ which is uniformly bounded
with respect to $\varepsilon$, as is $\pd{\GG}{\lambda}$.
On the other hand,
$$\pd{\GG}{r}=\pd{\wt N}{s}\psi_3'(r)=O(\psi_3'(r)).$$
We write
$$\eta=\left(A(\lambda)+O(z) \right)\frac{dz}{z}\qquad\text{with} \qquad A=\Res_{p_j}\eta.$$
Then

\begin{eqnarray*}
\wt\eta&=&\left(\GG^{-1} A \GG +O(z)\right)\frac{dz}{z}+\GG^{-1}\pd{\GG}{r} dr\\
\pd{\wt\eta}{\lambda}&=&\left(-\GG^{-1}\pd{\GG}{\lambda}\GG^{-1}A \GG+\GG^{-1}\pd{A}{\lambda}\GG+\GG^{-1}A\pd{\GG}{\lambda}+O(z)\right)\frac{dz}{z}\\
&&+
\left(-\GG^{-1}\pd{\GG}{\lambda}\GG^{-1}\pd{\GG}{r}+\GG^{-1}\frac{\partial^2 \GG}{\partial r\partial\lambda}\right) dr.\end{eqnarray*}
Using commutativity of the trace, we obtain
\begin{eqnarray*}
\tr\left(\wt\eta\wedge\pd{\wt\eta}{\lambda}\right)&=&
\tr\left(2A\pd{\GG}{\lambda}\GG^{-1}\pd{\GG}{r}\GG^{-1}-A\pd{\GG}{r}\GG^{-1}\pd{\GG}{\lambda}\GG^{-1}
-A\frac{\partial^2 \GG}{\partial r\partial \lambda}\GG^{-1}\right.\\
&&+\left.\pd{A}{\lambda}\pd{\GG}{r}\GG^{-1}+O(z)\psi_3'(r)\right)
dr\wedge \frac{dz}{z}.\end{eqnarray*}
On the other hand, $dr\wedge\frac{dz}{z}=i\, dr\wedge d\theta$ and
$$\pd{}{\lambda}\left(A\pd{\GG}{r}\GG^{-1}\right)=\pd{A}{\lambda}\pd{\GG}{r}\GG^{-1}+A\frac{\partial^2 \GG}{\partial r\partial\lambda}-A\pd{\GG}{r}\GG^{-1}\pd{\GG}{\lambda}\GG^{-1}$$
$$\pd{}{r}\left(A\pd{\GG}{\lambda}\GG^{-1}\right)=A\frac{\partial^2 \GG}{\partial r\partial\lambda} \GG^{-1}-A\pd{\GG}{\lambda}\GG^{-1}\pd{\GG}{r}\GG^{-1}$$
so
$$\tr\left(\wt\eta\wedge\pd{\wt\eta}{\lambda}\right)=
\left(\tr\pd{}{\lambda}\left(A\pd{\GG}{r}\GG^{-1}\right)
-2\,\tr\pd{}{r}\left(A\pd{\GG}{\lambda}\GG^{-1}\right)+O(z)\psi_3'(r)\right) i\,dr\wedge d\theta.$$
The first term integrates to
\begin{equation}
\label{eq:first-term}\int_{\lambda=\lambda_1}^{\lambda_2}
\int_{r=3\varepsilon}^{4\varepsilon}
\int_{\theta=0}^{2\pi}
\tr\pd{}{\lambda}\left(A\pd{\GG}{r}\GG^{-1}\right)d\lambda
=2\pi i\int_{r=3\varepsilon}^{4\varepsilon}
\tr\left[A\pd{\GG}{r}\GG^{-1}\right]_{\lambda_1}^{\lambda_2}dr.
\end{equation}
The second term integrates to
\begin{eqnarray*}
\int_{\lambda=\lambda_1}^{\lambda_2}
\int_{r=3\varepsilon}^{4\varepsilon}
\int_{\theta=0}^{2\pi}
-2\,\tr\pd{}{r}\left(A\pd{\GG}{\lambda}\GG^{-1}\right)d\lambda
&=&-4\pi i\int_{\lambda_1}^{\lambda_2}\tr\left[A\pd{\GG}{\lambda}\GG^{-1}\right]_{r=3\varepsilon}^{4\varepsilon}\\
&=&4\pi i\int_{\lambda_1}^{\lambda_2}\tr\left(A\pd{N}{\lambda}N^{-1}\right)d\lambda.
\end{eqnarray*}
Finally, the term $O(z)\psi_3'(r)$ integrates to $O(\varepsilon)$.\\

We now consider the second integral in \eqref{eq:logHol}. We have
$$g=\GG_{\lambda_1}^{-1} h \GG_{\lambda_2}$$
$$dg\, g^{-1}=-\GG_{\lambda_1}^{-1} \pd{\GG_{\lambda_1}}{r} dr
+\GG_{\lambda_1}^{-1}h\pd{\GG_{\lambda_2}}{r}\GG_{\lambda_2}^{-1}h^{-1}\GG_{\lambda_1} dr$$
$$\wt \eta^{\lambda_1}=(\GG_{\lambda_1}^{-1} A_{\lambda_1} \GG_{\lambda_1}+O(z))\frac{dz}{z}+\GG_{\lambda_1}^{-1}\pd{\GG_{\lambda_1}}{r}dr.$$
Using commutativity of the trace, we obtain
\begin{eqnarray*}
\tr(\wt\eta^{\lambda_1}\wedge dg\,g^{-1})&=&\tr\left(A_{\lambda_1}\pd{\GG_{\lambda_1}}{r}\GG_{\lambda_1}^{-1}-h^{-1} A_{\lambda_1} h\pd{\GG_{\lambda_2}}{r}\GG_{\lambda_2}^{-1}+O(z)\psi_3'(r)\right)dr\wedge\frac{dz}{z}\\
&=&\tr\left[-A \pd{\GG}{r} \GG^{-1}+O(z)\psi_3'(r)\right]_{\lambda_1}^{\lambda_2} i\, dr\wedge d\theta.\end{eqnarray*}
Therefore, this term integrates to
$$\int_{r=3\varepsilon}^{4\varepsilon}\int_{\theta=0}^{2\pi}
\tr(\wt\eta^{\lambda_1}\wedge dg\,g^{-1})=-2\pi i\int_{r=3\varepsilon}^{4\varepsilon}
\tr\left[A \pd{\GG}{r} \GG^{-1}\right]_{\lambda_1}^{\lambda_2} dr + O(\varepsilon),$$
which cancels with \eqref{eq:first-term}.
Finally, in the annulus $3\varepsilon\leq r\leq 4\varepsilon$, the gauge $\wh g$ only depends on
two variables (namely $r,t$) so $(\wh g^{-1} d\wh g)^{\wedge 3}=0$.
\end{proof}
\subsubsection{Middle transition annulus}
\label{section:middle}
\begin{Pro}
\label{prop:middle}
The contribution of the annulus $2\varepsilon\leq r\leq 3\varepsilon$ to
\eqref{eq:logHol} is $O(\varepsilon)$.
\end{Pro}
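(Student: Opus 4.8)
The plan is to show that each of the three terms in \eqref{eq:logHol} contributes $O(\varepsilon)$ once integrated over the middle annulus, by exploiting that there $\GG=N_j(\lambda)\GG_{2}(r)$ with $\GG_{2}(r)=\mathrm{diag}(\rho^{-k_j},\rho^{k_j})$, so that $\GG$ depends on the spatial variables only through $r=|z|$ and \emph{not} through the angle $\theta$ (since $\GG_{1}=\Id$ for $r\ge 2\varepsilon$). Consequently $\GG^{-1}d\GG=\GG_{2}^{-1}d\GG_{2}=\mathrm{diag}(-k_j,k_j)\tfrac{d\rho}{\rho}$ is diagonal and a multiple of $dr$, while the conjugated potential $\wc\eta=N_j^{-1}\eta N_j$ stays holomorphic in $z$. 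I would treat the three terms separately; the recurring mechanism is that, after isolating the single surviving form–component in each integrand, the angular integral $\int_0^{2\pi}(\cdots)\,d\theta$ annihilates a holomorphic factor multiplied by $e^{i\theta}$.

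First I would dispatch the third (Chern–Simons) term, which is the easiest: in the cylinder $D(p_j,3\varepsilon)\times[0,1]$ over the middle annulus one has $\GG_{1}=\Id$, so the extension $\wh g=\GG_{2}^{-1}\minimatrix{a_j^{\chi(t)}&\chi(t)b_j\\0&a_j^{-\chi(t)}}\GG_{2}$ depends only on the two variables $(r,t)$. Hence $(\wh g^{-1}d\wh g)^{\wedge 3}=0$ identically and this term contributes nothing.

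Next, for the first term I would split $\wt\eta=\GG^{-1}\eta\GG+\GG^{-1}d\GG$ into its $(1,0)$–part (a multiple of $dz$) and the diagonal part $\GG_{2}^{-1}d\GG_{2}$ (a multiple of $dr$). Since $\GG_{2}$ is $\lambda$–independent, $\pd{\wt\eta}{\lambda}$ is again a multiple of $dz$, so in $\wt\eta\wedge\pd{\wt\eta}{\lambda}$ the $dz\wedge dz$ piece drops and only the cross term $\GG_{2}^{-1}d\GG_{2}\wedge\pd{\wt\eta}{\lambda}$ survives. As both $\GG_{2}$ and $\mathrm{diag}(-k_j,k_j)$ are diagonal, the conjugation by $\GG_{2}$ cancels under the trace, and the integrand reduces to $\tr\!\big(\mathrm{diag}(-k_j,k_j)\,\pd{}{\lambda}\wc\eta\big)$ times $\tfrac{d\rho}{\rho}\wedge dz$. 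By Lemma \ref{lemma:preliminary} the diagonal of $\wc\eta$ has the $\lambda$–independent pole $\pm k_j z^{-1}$, so $\pd{}{\lambda}$ of its diagonal is holomorphic at $p_j$; multiplied by the $e^{i\theta}$ factor coming from $dr\wedge dz$, its angular integral vanishes.

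Finally, for the second term I would use that $g=\GG_{2}^{-1}\wc h\,\GG_{2}$ with $\wc h=N_j(\lambda_1)^{-1}hN_j(\lambda_2)$ upper triangular (Lemma \ref{lemma:preliminary}), so that $dg\,g^{-1}$ is strictly upper triangular and a multiple of $dr$. Again only the $(1,0)$–part of $\wt\eta^{\lambda_1}$ contributes, and the trace selects the $(2,1)$–entry of $\GG_{2}^{-1}\wc\eta^{\lambda_1}\GG_{2}$, namely $\rho^{-2k_j}(\wc\eta^{\lambda_1})_{21}=\rho^{-2k_j}O(z^{2k_j})$, which is holomorphic and vanishes to order $2k_j$; the same angular–integral argument kills it. Combining the three estimates, the middle annulus contributes $O(\varepsilon)$ (indeed the retained terms vanish outright). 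The main obstacle is not conceptual but the careful form–degree and triangularity bookkeeping needed to reduce each integrand to a holomorphic factor against $e^{i\theta}$; the essential inputs are the diagonal/upper–triangular structure and the $\lambda$–independence of the residues supplied by Lemma \ref{lemma:preliminary}.
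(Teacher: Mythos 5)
Your proof is correct and follows essentially the same route as the paper: the same explicit forms of $\wt\eta$, $\pd{\wt\eta}{\lambda}$, $g=\GG_2^{-1}\wc h\,\GG_2$ and $dg\,g^{-1}$ in the middle annulus, and the same observation that $\wh g$ depends only on $(r,t)$ so the Chern--Simons term vanishes. The only (harmless) difference is at the final step: where the paper bounds the two surviving integrands crudely by $O(\varepsilon)$ and $O(\varepsilon^2)$ using $\pd{\alpha}{\lambda}=O(z^0)$, $\gamma=O(z^{2k_j})$ and $\rho\geq r$, you note that each is a function of $r$ times a holomorphic function of $z$ times $e^{i\theta}$, so the angular integral vanishes identically --- a slightly sharper conclusion than needed.
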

\begin{proof}
In the annulus $2\varepsilon\leq r\leq 3\varepsilon$, we have $\GG=N \GG_{2}$.
With the notations of the proof of Lemma \ref{lemma:preliminary}
$$\wt\eta=\wc\eta. \GG_{2}=\begin{pmatrix}\alpha&\beta\rho^{2k}\\\gamma\rho^{-2k}&-\alpha\end{pmatrix}
dz+\begin{pmatrix}-k&0\\0&k\end{pmatrix}\frac{\rho'}{\rho}dr$$
$$\pd{\wt\eta}{\lambda}=\begin{pmatrix}\pd{\alpha}{\lambda}&\pd{\beta}{\lambda}\rho^{2k}\\
\pd{\gamma}{\lambda}\rho^{-2k}&-\pd{\alpha}{\lambda}\end{pmatrix}dz$$
$$\tr\left(\wt\eta\wedge\pd{\wt\eta}{\lambda}\right)=-2k\pd{\alpha}{\lambda}
\frac{\rho'}{\rho} r i e^{i\theta}dr\wedge d\theta.$$
Since $\pd{\alpha}{\lambda}=O(z^0)$ and $\rho\geq r$,
the integral of this term is $O(\varepsilon)$.
We have
$$g=\GG_{2}^{-1} \wc h \GG_{2}=\begin{pmatrix} a & b \rho^{2k}\\0&a^{-1}\end{pmatrix}$$
$$dg\,g^{-1}=\begin{pmatrix} 0 & 2k ab\rho^{2k-1}\\0&0\end{pmatrix}\rho' dr$$
$$\tr(\wt\eta^{\lambda_1}\wedge dg\,g^{-1})=
-2k ab \gamma_{\lambda_1}\frac{\rho'}{\rho} r i e^{i\theta}dr\wedge d\theta.$$
Since $\gamma=O(z^2)$, the integral of this term is $O(\varepsilon^2)$.
As in Section \ref{section:outer}, $\wh g$ only depends on the variables $(r,t)$
so $(\wh g^{-1}d\wh g)^{\wedge 3}=0$.
\end{proof}
\subsubsection{Inner transition annulus}
\label{section:inner}
\begin{Pro}
\label{prop:inner}
The contribution of the annulus $\varepsilon\leq r\leq 2\varepsilon$ to
\eqref{eq:logHol} is $-2 k_j \log(a_j)+O(\varepsilon)$.
\end{Pro}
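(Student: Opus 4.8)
The plan is to evaluate, in the inner annulus $\varepsilon\le r\le 2\varepsilon$, each of the three terms of \eqref{eq:logHol} for the explicit gauge $\GG=N_j\GG_{2}\GG_{1}$. I write $S=\GG_{2}\GG_{1}$, which is $\lambda$-independent (here $\rho(r)=r$, so $\GG_{2}=\mathrm{diag}(r^{-k_j},r^{k_j})$ and $\GG_{1}=H(\psi_1(r),e^{ik_j\theta})$), so that $\wt\eta=\wc\eta. S$ with $\wc\eta=\eta. N_j$, while $g=S^{-1}\wc h\,S$ and $\wh g=S^{-1}K(t)S$, where $K(t)=\minimatrix{a_j^{\chi(t)}&\chi(t)b_j\\0&a_j^{-\chi(t)}}$ and $\wc h=K(0)$. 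The decisive difference from Sections \ref{section:outer} and \ref{section:middle} is that $\GG_{1}$ genuinely depends on $\theta$, carrying the full winding $\mathrm{diag}(e^{-ik_j\theta},e^{ik_j\theta})$ at $r=\varepsilon$; consequently $\wh g$ depends on all three variables $(r,\theta,t)$ and the Wess--Zumino term no longer vanishes.

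First I would dispose of the first term. Since $\wc\eta$ and $\partial_\lambda\wc\eta$ are of type $(1,0)$, we have $\tr(\wc\eta\wedge\partial_\lambda\wc\eta)=0$, and conjugation invariance of the trace reduces $\tr(\wt\eta\wedge\partial_\lambda\wt\eta)$ to $\tr(dS\,S^{-1}\wedge\partial_\lambda\wc\eta)$. Using the orders recorded in Lemma \ref{lemma:preliminary} (the $\lambda$-independent residue $k_j$ drops out under $\partial_\lambda$, and the off-diagonal entries carry the compensating powers $r^{\pm 2k_j}$), together with the fact that the diagonal part survives the $\theta$-integration only through holomorphic Taylor coefficients, one checks that the $r$-integral over $[\varepsilon,2\varepsilon]$ of the resulting integrand is $O(\varepsilon)$. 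Hence the first term contributes $O(\varepsilon)$.

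The heart of the matter is the second term together with the Wess--Zumino term, which jointly form the Chern--Simons transition $\log\Theta(\wt\eta^{\lambda_1},g)$ localized to the annulus. For the Wess--Zumino term I would use the Maurer--Cartan identity
\[
\tr\!\left(\wh g^{-1}d\wh g\right)^{\wedge 3}=3\,\tr\!\Big(K^{-1}dK\wedge\big(\hat\sigma-K^{-1}\hat\sigma K\big)^{\wedge 2}\Big),\qquad \hat\sigma:=dS\,S^{-1},
\]
which follows from $\wh g^{-1}d\wh g=S^{-1}\big(K^{-1}dK+\hat\sigma-K^{-1}\hat\sigma K\big)S$ and the fact that $K^{-1}dK\propto dt$ while $\hat\sigma\propto dr,d\theta$. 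Expanding $\hat\sigma$ via $S=\GG_{2}\GG_{1}$, its off-diagonal entries are $(dQ\,Q^{-1})_{12}\,r^{-2k_j}$ and $(dQ\,Q^{-1})_{21}\,r^{2k_j}$ with $Q=\GG_{1}$, so the only contributions surviving as $\varepsilon\to 0$ are built from the $H$-dependent density $(dQ\,Q^{-1})_{12}\wedge(dQ\,Q^{-1})_{21}$ (in which the powers $r^{\pm 2k_j}$ cancel) together with the scalar $t$-integrals of $K^{-1}\dot K$. Using the upper-triangularity of $\wc h$ from Lemma \ref{lemma:preliminary}(2), I would show that the pieces of the second term and of the Wess--Zumino term proportional to $\int_{\mathrm{annulus}}(dQ\,Q^{-1})_{12}\wedge(dQ\,Q^{-1})_{21}$ cancel, leaving a contribution independent of the auxiliary homotopy $H$ and the cutoff $\psi_1$.

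The surviving, $H$-independent, contribution is topological. The winding number $k_j$ of the loop $\theta\mapsto\mathrm{diag}(e^{-ik_j\theta},e^{ik_j\theta})$ that $\GG_{1}$ contracts appears, after a Stokes reduction in $r$, as $\tfrac{1}{2\pi}\oint_{r=\varepsilon}d\theta$ of the diagonal part of $\hat\sigma$, while the diagonal part of $K^{-1}\dot K$ integrates to $\int_0^1(\kappa_{11}-\kappa_{22})=-2\log a_j$, since the diagonal of $K$ interpolates $a_j^{\pm\chi(t)}=e^{\pm\chi(t)\log a_j}$ between $\mathrm{diag}(a_j,a_j^{-1})$ and $\Id$. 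Carrying out the $\theta$- and $t$-integrations then yields $-2k_j\log a_j+O(\varepsilon)$. The hard part will be exactly this separation: proving that the $H$- and $\psi_1$-dependent $O(1)$ terms of the second and Wess--Zumino terms cancel, so that only the topological winding $k_j$ and the boundary datum $a_j$ remain. This is where the strictly triangular structure of $\wc h$ and the identity $\tr(dQ\,Q^{-1})=0$ on the $\theta$-circle are essential, and it is the step that makes the contribution of the annulus well defined independently of all auxiliary choices.
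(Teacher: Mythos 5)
Your skeleton coincides with the paper's: the first term of \eqref{eq:logHol} is $O(\varepsilon)$ because the only $\theta$-dependent gauge factor is $\lambda$-independent; the second and Wess--Zumino terms must be combined; and the quantity controlling everything is $\int_{\varepsilon\le r\le 2\varepsilon}[M_r,M_\theta]\,dr\wedge d\theta$ with $M_r=\partial_r\mathcal{G}_1\,\mathcal{G}_1^{-1}$ and $M_\theta=\partial_\theta\mathcal{G}_1\,\mathcal{G}_1^{-1}$, which a Stokes argument in $r$ identifies with the boundary winding $\mathrm{diag}(2\pi k_j i,-2\pi k_j i)$ --- exactly the paper's Lemma \ref{lemma:M1M2}, and the reason the answer is independent of $H$ and $\psi_1$. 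Your packaging $\wh g^{-1}d\wh g=S^{-1}\big(K^{-1}dK+\hat\sigma-K^{-1}\hat\sigma K\big)S$ with $\hat\sigma=dS\,S^{-1}$, reducing the Wess--Zumino integrand to $3\tr\big(K^{-1}dK\wedge(\hat\sigma-K^{-1}\hat\sigma K)^{\wedge 2}\big)$, is a cleaner route to the same integrand than the paper's entry-by-entry expansion, and your identification $\int_0^1\big((K^{-1}\dot K)_{11}-(K^{-1}\dot K)_{22}\big)\,dt=-2\log a_j$ is correct.

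The gap is the cancellation step, which you yourself flag as ``the hard part'' and whose described mechanism is not right. After discarding $O(\varepsilon)$ errors, \emph{every} relevant term --- the entire second term of \eqref{eq:logHol}, the entire Wess--Zumino term, and in particular the surviving answer --- is proportional to $\int(dQ\,Q^{-1})_{12}\wedge(dQ\,Q^{-1})_{21}=\int[M_r,M_\theta]_{11}\,dr\wedge d\theta$; so the claim that ``the pieces proportional to this integral cancel, leaving'' a separate topological remainder cannot be the correct bookkeeping (taken literally it would give $0$). What actually happens is: with the prefactors of \eqref{eq:logHol}, the second term contributes $-\tfrac{k_j}{2}(a_j^2-a_j^{-2})$, while the $t$-integration in the Wess--Zumino term produces $\log a_j\int_0^1\big(a_j^{2s}+a_j^{-2s}-2\big)ds$ times the winding; the $a_j^{2s}+a_j^{-2s}$ part integrates to cancel the second term exactly, and it is the constant $-2$ in $(a_j^{s}-a_j^{-s})^2=a_j^{2s}+a_j^{-2s}-2$ that survives and yields $-2k_j\log a_j$. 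The quantity ``$\tfrac{1}{2\pi}\oint_{r=\varepsilon}d\theta$ of the diagonal part of $\hat\sigma$'' that you invoke for the surviving term is not a separate contribution: it is precisely what $\int[M_r,M_\theta]$ Stokes down to. So the decisive coefficient extraction is missing, and the proposed separation into an ``$H$-dependent part that cancels'' and an ``$H$-independent topological remainder'' does not correspond to any actual decomposition of the computation; you need instead the two algebraic identities the paper uses (evaluating $\tr(A\Delta B\Delta^{-1}-B\Delta A\Delta^{-1})$ and its $\Delta^{\chi}$-twisted analogue in terms of $[A,B]_{22}$) together with the explicit $t$-integration.
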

\begin{proof}
Let $\wf\eta=\eta. N \GG_{2}=\wc\eta.\GG_{2}$. In the annulus $\varepsilon\leq r\leq 2\varepsilon$, we have by Lemma \ref{lemma:preliminary}
$$\wf\eta=\begin{pmatrix} \alpha &\beta r^{2k}\\\gamma r^{-2k}&-\alpha\end{pmatrix} dz
+\begin{pmatrix}-k&0\\0 &k\end{pmatrix}\frac{dr}{r}
=\begin{pmatrix}ki&0\\0&-ki\end{pmatrix} d\theta+O(1) dz$$
$$\wt\eta=\wf\eta. \GG_{1}=\GG_{1}^{-1}\begin{pmatrix}ki&0\\0&-ki\end{pmatrix} \GG_{1} d\theta+O(1) dz+\GG_{1}^{-1}\pd{\GG_{1}}{r}dr+\GG_{1}^{-1}\pd{\GG_{1}}{\theta}d\theta.$$
Since $\GG_{1}$ does not depend on $\lambda$
\begin{equation*}
\begin{split}\pd{\wt\eta}{\lambda}&=O(1) dz\\
\tr\left(\wt\eta\wedge\pd{\wt\eta}{\lambda}\right)&=O(1)d\theta\wedge dz+O(1)\psi_1' (r)dr\wedge dz
\end{split}
\end{equation*}
so this term integrates to $O(\varepsilon)$.
In the annulus $\varepsilon\leq r\leq 2\varepsilon$ we have
$$g=\GG_{1}^{-1}\begin{pmatrix} a& b r^{2k}\\0&a^{-1}\end{pmatrix} \GG_{1}
=\GG_{1}^{-1}(\Delta +O(r^{2k}))\GG_{1}$$
with
$$\Delta=\begin{pmatrix}a&0\\0&\frac{1}{a}\end{pmatrix}.$$
Since $\pd{\GG_{1}}{r}=O(1)\psi'_1$ and $\pd{\GG_{1}}{\theta}=O(1)$
\begin{align*}dg\, g^{-1}=&
\left(
-\GG_{1}^{-1}\pd{\GG_{1}}{r}+\GG_{1}^{-1}\Delta \pd{\GG_{1}}{r}\GG_{1}^{-1}\Delta^{-1} \GG_{1}
+O(r^{2k})\psi'_1+O(r^{2k-1})\right)dr\\
&+\left(
-\GG_{1}^{-1}\pd{\GG_{1}}{\theta}+\GG_{1}^{-1}\Delta \pd{\GG_{1}}{\theta}\GG_{1}^{-1}\Delta^{-1} \GG_{1}
+O(r^{2k})\right)d\theta
\end{align*}
$$\wt\eta^{\lambda_1}=\left(\GG_{1}^{-1}\pd{\GG_{1}}{r}+O(1)\right)dr
+\left(\GG_{1}^{-1}K \GG_{1}
+\GG_{1}^{-1}\pd{\GG_{1}}{\theta}+O(r)\right)d\theta$$
with
$$K=\begin{pmatrix}ki&0\\0&-ki\end{pmatrix}.$$
Hence
\begin{align*}
\tr(\wt\eta^{\lambda_1}\wedge dg\,g^{-1})
=&\Big(
-\GG_{1}^{-1}\pd{\GG_{1}}{r}\GG_{1}^{-1}\pd{\GG_{1}}{\theta}+\GG_{1}^{-1}\pd{\GG_{1}}{r}
\GG_{1}^{-1}\Delta\pd{\GG_{1}}{\theta}\GG_{1}^{-1}\Delta^{-1}\GG_{1}
+ \GG_{1}^{-1}K\pd{\GG_{1}}{r}\\
&- \GG_{1}^{-1}K\Delta\pd{\GG_{1}}{r}\GG_{1}^{-1}\Delta^{-1}\GG_{1} 
+\GG_{1}^{-1}\pd{\GG_{1}}{\theta}\GG_{1}^{-1}\pd{\GG_{1}}{r}\\
&-\GG_{1}^{-1}\pd{\GG_{1}}{\theta}\GG_{1}^{-1}\Delta\pd{\GG_{1}}{r}\GG_{1}^{-1}\Delta^{-1}\GG_{1}
+O(r)\psi'_1+O(r^{2k-1})\Big)dr\wedge d\theta.
\end{align*}
Let
$$M_r=\pd{\GG_{1}}{r}\GG_{1}^{-1}\quad\text{ and }\quad
M_{\theta}=\pd{\GG_{1}}{\theta}\GG_{1}^{-1}.$$
Using commutativity of the trace and $\Delta^{-1} K\Delta=K$, we obtain
$$\tr(\wt\eta^{\lambda_1}\wedge dg\,g^{-1})
=\tr\left(M_r\Delta M_{\theta}\Delta^{-1}-M_{\theta}\Delta M_r\Delta^{-1}
+O(r)\psi_1'+O(r^{2k-1})\right)dr\wedge d\theta.$$
Now for any matrices $A,B$, an elementary computation shows that
$$\tr\left(A\Delta B\Delta^{-1}-B\Delta A\Delta^{-1}\right)=
 (a^2-a^{-2})[A,B]_{22}.$$
 Using Lemma \ref{lemma:M1M2} below, we obtain
\begin{equation}
\label{eq:second-term}
\int_{r=\varepsilon}^{2\varepsilon}\int_{\theta=0}^{2\pi}
\tr(\wt\eta^{\lambda_1}\wedge dg\,g^{-1})
=-2\pi k i(a^2-a^{-2})+O(\varepsilon).
\end{equation}
\begin{Lem} It holds
\label{lemma:M1M2}
$$\int_{r=\varepsilon}^{2\varepsilon}\int_{\theta=0}^{2\pi}
[M_r,M_{\theta}]dr \wedge d\theta=\begin{pmatrix}2\pi k i&0\\0&-2\pi k i\end{pmatrix}.$$
\end{Lem}
\begin{proof}
We have
\begin{align*}
d\left(\pd{\GG_{1}}{\theta}\GG_{1}^{-1}d\theta+\pd{\GG_{1}}{r}\GG_{1}^{-1}dr\right)
&=\left(\frac{\partial^2 \GG_{1}}{\partial r\partial\theta}\GG_{1}^{-1}
-\pd{\GG_{1}}{\theta}\GG_{1}^{-1}\pd{\GG_{1}}{r}\GG_{1}^{-1}
-\frac{\partial^2 \GG_{1}}{\partial \theta\partial r}\GG_{1}^{-1}
+\pd{\GG_{1}}{r}\GG_{1}^{-1}\pd{\GG_{1}}{\theta}\GG_{1}^{-1}\right)dr\wedge d\theta\\
&=[M_r,M_{\theta}]dr\wedge d\theta.\end{align*}
By Stokes formula
$$\int_{r=\varepsilon}^{2\varepsilon}[M_r,M_{\theta}]dr\wedge d\theta
=\int_{\theta=0}^{2\pi}\left[\pd{\GG_{1}}{\theta}\GG_{1}^{-1}\right]_{r=\varepsilon}^{2\varepsilon}d\theta\\
=-\int_0^{2\pi}\begin{pmatrix}-k i&0\\0&k i\end{pmatrix}d\theta\\
=\begin{pmatrix}2\pi k i&0\\0&-2\pi k i\end{pmatrix}.$$
\end{proof}
Returning to the proof of Proposition \ref{prop:inner}, we now consider the third term in \eqref{eq:logHol}.
We have in the annulus $\varepsilon\leq r\leq 2\varepsilon$
\begin{align*}
\wh g=&\GG_{1}(r,\theta)^{-1}\begin{pmatrix}
a^{\chi(t)}&\chi(t) b r^{2k}\\0& a^{-\chi(t)}\end{pmatrix} \GG_{1}(r,\theta)
=\GG_{1}(r,\theta)^{-1}(\Delta^{\chi(t)}+O(r^2))\GG_{1}(r,\theta)\\
\wh g^{-1}d\wh g=&
\left(-\GG_{1}^{-1}\Delta^{-\chi}\pd{\GG_{1}}{r}\GG_{1}^{-1}\Delta^{\chi} \GG_{1}
+\GG_{1}^{-1}\pd{\GG_{1}}{r}+O(r^{2k})\psi_1'+O(r^{2k-1})\right)dr\\
&+\left(-\GG_{1}^{-1}\Delta^{-\chi}\pd{\GG_{1}}{\theta}\GG_{1}^{-1}\Delta^{\chi} \GG_{1}
+\GG_{1}^{-1}\pd{\GG_{1}}{\theta}+O(r^{2k})\right)d\theta\\
&+\left(\GG_{1}^{-1}L \GG_{1}+O(r^{2k})\right)\chi'dt
\end{align*}
with
$$L=\begin{pmatrix} \log(a)&0\\0&-\log(a)\end{pmatrix}.$$
Using
$$\tr\left((A\,dx+ B\,dy +C\, dz)^{\wedge 3}\right)=3\,\tr(ABC-BAC) dx\wedge dy \wedge dz$$
we obtain
\begin{eqnarray*}
\lefteqn{\tr\left(\wh g^{-1}d\wh g)^{\wedge 3}\right)
=3\,\tr\Big(
\GG_{1}^{-1}\Delta^{-\chi}\pd{\GG_{1}}{r}\GG_{1}^{-1}\pd{\GG_{1}}{\theta}\GG_{1}^{-1}\Delta^{\chi}L \GG_{1}
-\GG_{1}^{-1}\Delta^{-\chi}\pd{\GG_{1}}{r}\GG_{1}^{-1}\Delta^{\chi}\pd{\GG_{1}}{\theta}\GG_{1}^{-1}L \GG_{1}
}\\
&&-\GG_{1}^{-1}\pd{\GG_{1}}{r}\GG_{1}^{-1}\Delta^{-\chi}\pd{\GG_{1}}{\theta}\GG_{1}^{-1}\Delta^{\chi}L \GG_{1}
+\GG_{1}^{-1}\pd{\GG_{1}}{r}\GG_{1}^{-1}\pd{\GG_{1}}{\theta}\GG_{1}^{-1} L \GG_{1}\\
&&+\mbox{ (same exchanging $r$ and $\theta$) } + O(r^{2k})\psi'_1+O(r^2k-1)\Big)
dr\wedge d\theta\wedge \chi' dt\\
&=&
3\,\tr\left(\left[M_r-\Delta^{-\chi}M_r\Delta^{\chi},M_{\theta}\right]L
+\left[M_r,M_{\theta}-\Delta^{-\chi}M_{\theta}\Delta^{\chi}\right]L +O(r^{2k})\psi'_1+O(r^{2k-1})\right)
dr\wedge d\theta\wedge\chi' dt.
\end{eqnarray*}
An elementary computation shows that for any matrices $A,B$
$$\tr\left([A-\Delta^{-\chi}A\Delta^{\chi},B]L+[A,B-\Delta^{-\chi}B\Delta^{\chi}]L\right)
=2(a^{\chi}-a^{-\chi})^2\log(a)[A,B]_{22}.$$
Hence using Lemma \ref{lemma:M1M2}
\begin{eqnarray*}
\int_{t=1}^0\int_{r=\varepsilon}^{2\varepsilon}\int_{\theta=0}^{2\pi}
\tr\left(\wh g^{-1}d\wh g)^{\wedge 3}\right)
&=&6\int_{t=1}^0\left(a^{\chi(t)}-a^{-\chi(t)}\right)^2\log(a)\chi'(t)dt
\int_{r=\varepsilon}^{2\varepsilon}\int_{\theta=0}^{2\pi}
[M_r,M_{\theta}]_{22}dr\,d\theta +O(\varepsilon)\\
&=&-12\pi k i\int_{s=0}^1 (a^{2s}+a^{-2s}-2)\log(a) ds+O(\varepsilon)\\
&=&-6\pi k i(a^2-a^{-2})+24\pi  k i \log(a) +O(\varepsilon).
\end{eqnarray*}
We integrate from $t=1$ to $t=0$ because $\Sigma$ is oriented as
a boundary of $B$.
The first term cancels with \eqref{eq:second-term} when computing \eqref{eq:logHol}.
This concludes the proof of Proposition \ref{prop:inner}.
Theorem \ref{theorem:regularizing-volume} follows by letting $\varepsilon\to 0$.
\end{proof}

Sometimes, positive regularizing gauges can be quite complicated. The following remark enables us to compute the volume using simpler gauges which are not defined in the whole unit $\lambda$-disk.
\begin{Rem}
\label{remark:fake-gauge}
Assume that for each apparent singularity $p_j$, we are given another local gauge $G'_j$ such that $\eta. G'_j$ extends holomorphically to $p_j$ and which has the same form
$$G'_j(z,\lambda)=N'_j(\lambda)\begin{pmatrix}z^{-k_j}&0\\0&z^{k_j}\end{pmatrix}$$
but
$N'_j(\lambda)$ is only supposed to be defined for $\lambda$ in a neighborhood of the
arc $[\lambda_1,\lambda_2]$ on the unit circle.
By Lemma \ref{lemma:preliminary}, the first column of $N'_j$ is an eigenvector  of
$\Res_{p_j}\eta$ for the eigenvalue $k_j$, so $N_j^{-1}N'_j$ is upper triangular.
Hence $G_j^{-1} G'_j$ extends holomorphically to $p_j$.
Let $\GG'$ be the global gauge on $\Sigma$ constructed as in Section \ref{section:global-gauge} using the local gauges $G'_j$. Then $\GG^{-1} \GG'$ extends holomorphically at $p_j$
and is a smooth gauge between $\eta.\GG$ and $\eta.\GG'$ (for $\lambda$ on the arc $[\lambda_1,\lambda_2]$). So we can use $\GG'$ to compute the holonomy
$\Hol(\mathcal D,\gamma)$, and Theorem \ref{theorem:regularizing-volume} also holds for the gauge $\GG'$ (since we did not use in the proof that $G_j$ is positive).
\end{Rem}
\subsubsection{Example: spheres revisited}
\label{section:sphere-revisited}
We consider the following Fuchsian potential for CMC spheres :
$$\eta=\begin{pmatrix} 0&\lambda^{-1}\\\lambda &0\end{pmatrix}\frac{dz}{z}.$$
This constructs double covers of round spheres, branched over $0$ and $\infty$.
Local regularizing gauges at $0$ and $\infty$ are
$$G_0=\begin{pmatrix}1&0\\ \lambda&1\end{pmatrix}\begin{pmatrix}z^{-1}&0\\0&z\end{pmatrix}$$
$$G_{\infty}=\begin{pmatrix}1&0\\-\lambda& 1\end{pmatrix}\begin{pmatrix}w^{-1}&0\\0&w\end{pmatrix}$$
with $w=\frac{1}{z}$ as local coordinate in a neighborhood of $\infty$.
With Sym-points $\lambda_1=e^{-i\alpha}$, $\lambda_2=e^{i\alpha}$, a gauge between
$\eta^{\lambda_1}$ and $\eta^{\lambda_2}$ is
$$h=\begin{pmatrix} e^{i\alpha}&0\\0&e^{-i\alpha}\end{pmatrix}.$$
We obtain
$$\tr\left(\Res_0\eta \pd{G_0}{\lambda} G_0^{-1}\right)=
\tr\left(\Res_{\infty}\eta \pd{G_{\infty}}{\lambda} G_{\infty}^{-1}\right)=\lambda^{-1}$$
$$a_0=a_{\infty}=e^{i\alpha}$$
which gives
$$\Hol(\gamma)=\exp\left(2\int_{e^{-i\alpha}}^{e^{i \alpha}}\frac{d\lambda}{\lambda}-4\log(i\alpha)\right)=1$$
as expected.
\section{Enclosed volume for Lawson-type CMC surfaces}
\label{section:Lawson}
Lawson \cite{L} has constructed a family of embedded minimal surfaces $\xi_{1,g}$ in $\S^3$ parameterized by their genus $g\in\N^*$.
They admit an orientation reversing symmetry which interchanges inside and outside, so their enclosed volume  is always $\pi^2$.
In \cite{HHT2}, we have constructed, for large genus $g$, deformations of Lawson minimal surfaces
$\xi_{1,g}$ which are closed embedded CMC surfaces in $\S^3$ (not to be confused with Lawson CMC surfaces in $\R^3$  obtained via Lawson correspondence).
We computed the order 2 expansion of their Willmore energy in terms of $1/g$.
\subsection{Fuchsian potential for Lawson-type CMC surfaces in $\S^3$}\label{sec:LCMC}
In this section, we recall the construction in \cite{HHT2}.
Fix $\varphi\in(0,\pi/2)$.
We consider the 4-punctured sphere $\C P^1\setminus\{p_1,p_2,p_3,p_4\}$
with
$$p_1=e^{i\varphi},\quad p_2=-e^{-i\varphi},\quad p_3=-e^{i\varphi},\quad p_4=e^{-i\varphi}.$$
We consider a meromorphic potential $\eta$ of the form
\begin{equation}\label{eq:eta}\eta= t\sum_{k=1}^4 A_k(\lambda)\frac{dz}{z-p_k}\end{equation}
with
\begin{equation}
\label{eq:etaAk}
\begin{split}
A_1&=\begin{pmatrix}x_1 & x_2+i x_3\\ x_2-i x_3& -x_1\end{pmatrix}\qquad\qquad
A_2=\begin{pmatrix}-x_1 & -x_2+i x_3\\ -x_2-i x_3& x_1\end{pmatrix}\\
A_3&=\begin{pmatrix}x_1 & -x_2-i x_3\\ -x_2+i x_3& -x_1\end{pmatrix}\qquad\qquad
A_4=\begin{pmatrix}-x_1 & x_2-i x_3\\ x_2+i x_3& x_1\end{pmatrix}.
\end{split}
\end{equation}
Here $t>0$ is the main parameter of the construction and $x_1,x_2,x_3$ are meromorphic functions of $\lambda$ in the unit disk with simple poles at $\lambda=0$ with residues
$$\Res_{\lambda=0}(x_1,x_2,x_3)=\tfrac{1}{2}\left(i,-\sin(\varphi),-\cos(\varphi)\right).$$
(These are chosen so that $\Res_{\lambda=0}\eta$ is nilpotent.)
Let $\PhiDPW$ be the solution of
$d\PhiDPW=\PhiDPW\eta$ with initial condition $\PhiDPW(z=0)=\Id$ and
$M_k$ the monodromy of $\PhiDPW$ around $p_k$.
We have solved \cite{HHT2} the following Monodromy Problem
\begin{equation}
\label{eq:monodromy-problem2}\begin{cases}
\exists U\in \mathrm{SL}(2,\C),\;\forall k,\; U^{-1}M_k U\in\Lambda SU(2)\\
\exists s>0,\;\forall k,\; M_k\text{ has constant eigenvalues } e^{\pm 2\pi i s}\\
\exists \theta\in\R,\;\forall k,\; M_k(\lambda=e^{\pm i\theta})\text{ is diagonal}
\end{cases}\end{equation}
using the Implicit Function Theorem at $t=0$, which determines the functions $x_1,x_2,x_3$, and the real numbers $\theta,s$ as functions of $t$ for $t>0$ small enough. In particular, at $t=0$, we have
\begin{equation}
\label{eq:central}
\begin{cases}
x_1&=\tfrac{i}{2}(\lambda^{-1}-\lambda)\\
x_2&=\tfrac{-1}{2}\sin(\varphi)(\lambda^{-1}+\lambda)\\
x_3&=\tfrac{-1}{2}\cos(\varphi)(\lambda^{-1}+\lambda)\\
\theta&=\frac{\pi}{2}
\end{cases}.
\end{equation}
Moreover, $s(t)\sim t$.
Then for $g\in\N$ large enough, we consider the compact Riemann surface $\Sigma_g$
of genus $g$ defined by the algebraic equation
$$y^{g+1}=\frac{(z-p_1)(z-p_3)}{(z-p_2)(z-p_4)},$$
and  the $(g+1)$-fold covering $\pi_g\colon\Sigma_g\to\C P^1$, $(y,z)\mapsto z$ which is totally branched over
$p_1,\cdots,p_4$.
We take $t$ such that
$$s(t)=\frac{1}{2g+2}$$
and let $\wt\eta_g=\pi_g^*\eta$ be the lift of $\eta$ to $\Sigma_g$.
We take the Sym points
$\lambda_1=e^{-i\theta(t)}$ and $\lambda_2=e^{i\theta(t)}.$
The Monodromy Problem \eqref{eq:monodromy-problem} is solved and
moreover, $\wt{p}_k=\pi^{-1}(p_k)$ are apparent singularities. We obtain,
for each $\varphi\in(0,\pi/2)$ and $g$ large enough,
a CMC immersion $f_g:\Sigma_g\to\S^3$.
For large $g$, the image $f_g(\Sigma_g)$ looks like the desingularization of two great spheres meeting with an angle $2\varphi$.
In the case $\varphi=\pi/4$, we obtain Lawson's minimal surface $\xi_{1,g}$.

An important aspect of the construction is that the $t$-derivatives of the parameters
$x_1,x_2,x_3,\theta$ and $s$ at $t=0$ can be computed, up to any order, in terms of certain iterated integrals.
In particular, the derivatives of order up to $2$ are explicit, elementary functions of $\theta$.

\begin{Rem}\label{etarescaled}
By standard Fuchsian theory, the monodromy $M_k$ is conjugate to
$\exp(2\pi i t A_k)$. Hence, by the second item of the Monodromy Problem
\eqref{eq:monodromy-problem2}, $t A_k$ has eigenvalues $\pm s$, so
$$\det(tA_k)=-s^2=-t^2(x_1^2+x_2^2+x_3^2).$$
It will be convenient to scale $x_k$ by $t/s$, so that we have
$x_1(t)^2+x_2(t)^2+x_3(t)^2=1$ after scaling.
This scales the matrices $A_k$ by $t/s$ as well, and by abuse of notation the new potential is given by
$$\eta=s\sum_{k=1}^4 A_k\frac{dz}{z-p_k}.$$
\end{Rem}

%We need one last result:
\begin{Pro}
\label{prop:diagonal}
For all $t$, it holds
$$(x_1,x_2,x_3)\mid_{\lambda=\lambda_1}=(-1,0,0)\quad\text{ and }\quad
(x_1,x_2,x_3)\mid_{\lambda=\lambda_2}=(1,0,0).$$
\end{Pro}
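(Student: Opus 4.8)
The plan is to reduce the statement to the vanishing of the off-diagonal part of the residues at the Sym points, and then to extract this vanishing from the third line of the Monodromy Problem \eqref{eq:monodromy-problem2}. The diagonal entry and its sign will be pinned down afterwards by continuity from $t=0$.

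First I would record the reality built into the construction. The family is unitary for $\lambda\in\S^1$, which at the level of the residue matrices amounts to $A_k(1/\bar\lambda)=A_k(\lambda)^*$, equivalently $x_k(1/\bar\lambda)=\overline{x_k(\lambda)}$ (one checks this directly on the central data \eqref{eq:central}). Hence on the unit circle the $x_k$ are real, and by Remark \ref{etarescaled} we have $x_1^2+x_2^2+x_3^2=1$ there, so $\lambda\mapsto(x_1,x_2,x_3)(\lambda)$ traces a curve on the round sphere $\S^2\subset\R^3$. Since $\lambda_1=e^{-i\theta}$ and $\lambda_2=e^{i\theta}$ lie on $\S^1$, it suffices to prove that $x_2$ and $x_3$ vanish at $\lambda_1$ and $\lambda_2$: this forces $x_1=\pm1$, a discrete condition, so that the sign is locked to its value at $t=0$, namely $-1$ at $\lambda_1=-i$ and $+1$ at $\lambda_2=i$ by \eqref{eq:central}.

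Now the content of $x_2(\lambda_i)=x_3(\lambda_i)=0$ is exactly that the residues $A_k(\lambda_i)$ are simultaneously diagonal, i.e. that $\eta^{\lambda_i}$ is diagonal. What the Monodromy Problem supplies is the weaker statement that the monodromies $M_k(\lambda_i)$ are diagonal. By Remark \ref{remark:N_j} the eigenline of $\Res_{p_k}\eta^{\lambda_i}=sA_k(\lambda_i)$ is the fibre $L_1(p_k)$ of the parallel line subbundle singled out by the diagonal monodromy, and, since $A_k(\lambda_i)$ is Hermitian, this eigenline equals $\C e_1$ precisely when $x_2(\lambda_i)=x_3(\lambda_i)=0$. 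Diagonality of $M_k$ only gives a flat splitting $\underline{\C^2}=L_1\oplus L_2$ with $L_1(0)=\C e_1$ and $L_2(0)=\C e_2$, so the task is to upgrade this to constancy of $L_1$, equivalently $L_1(p_k)=\C e_1$ for all $k$.

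This upgrade is the step I expect to be the main obstacle: a flat line subbundle with $L_1(0)=\C e_1$ need not be the constant subbundle, and purely local monodromy data cannot distinguish the two. Note that the evident symmetries of the ansatz (invariance of $\eta$ under $z\mapsto-z$ combined with conjugation by $\operatorname{diag}(1,-1)$, and under $z\mapsto 1/z$ combined with conjugation by $\bigl(\begin{smallmatrix}0&1\\1&0\end{smallmatrix}\bigr)$) are satisfied identically and therefore impose no constraint on $(x_1,x_2,x_3)(\lambda)$ beyond the reality above; so the global monodromy must genuinely be used. To close the gap I would argue by analyticity and continuation: the off-diagonal combination $x_2\pm i x_3$ is holomorphic in $\lambda$ on the unit disk and vanishes at $\lambda_1,\lambda_2$ when $t=0$ by \eqref{eq:central}, while the diagonalisation condition defining $\theta(t)$ is non-degenerate at $t=0$, so the solution branch furnished by the Implicit Function Theorem in \cite{HHT2} is unique and analytic in $t$. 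One then checks that imposing $x_2=x_3=0$ at $\lambda=e^{\pm i\theta}$ yields a connection whose monodromies are automatically diagonal and which solves the full Monodromy Problem; by uniqueness this must be the constructed branch, giving the claim for all small $t$. The decisive and least routine point is precisely this locking — that the monodromy-diagonal point on $\S^2$ never drifts off the residue-diagonal pole as $t$ varies.
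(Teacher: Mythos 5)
You have correctly reduced the statement to showing $x_2=x_3=0$ at the Sym points, and you have correctly located the crux: diagonality of the monodromies $M_k(e^{\pm i\theta})$ only yields a flat splitting $\underline{\C}^2=L_1\oplus L_2$ with prescribed fibre at $z=0$, and one must upgrade this to the statement that the splitting is \emph{constant}, equivalently that the residues $A_1,\dots,A_4$ are simultaneously diagonalizable. However, your proposed closure of this gap does not work. You suggest invoking uniqueness of the Implicit Function Theorem branch and then ``checking that imposing $x_2=x_3=0$ at $\lambda=e^{\pm i\theta}$ yields a connection \dots which solves the full Monodromy Problem.'' This is circular: to know that the Monodromy Problem \eqref{eq:monodromy-problem2} augmented by these extra scalar conditions still admits a solution (rather than being overdetermined), you would either have to redo the IFT analysis of \cite{HHT2} for the augmented system and verify that its linearization is still surjective with the same kernel count, or you would have to already know that the extra conditions hold automatically along the original branch --- which is precisely the claim being proved. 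The phrase ``one then checks'' conceals the entire content of the proposition, and in addition an IFT argument would only cover small $t$ rather than the whole parameter range on which the family is defined.

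The paper closes the gap by a global argument on $\CP^1$ rather than by perturbation theory. Since the monodromy at $\lambda_i$ is diagonal, the connection $d+\eta$ is totally reducible, and by Lemma 11 of \cite{Fuchsian} a totally reducible Fuchsian system on the $4$-punctured sphere with these small residue eigenvalues has \emph{constant} invariant line subbundles: a holomorphic line subbundle of $\mathcal O\oplus\mathcal O$ over $\CP^1$ has nonpositive degree, and the Fuchs relation (the sum of the residue eigenvalues on the subbundle equals minus its degree, while each eigenvalue is $\pm s$ with $4s<1$) forces both degrees to vanish. Hence $\eta$ is conjugate by a constant matrix to a diagonal potential and the $A_k$ commute. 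The explicit commutators $[A_1,A_2]$ and $[A_1,A_4]$ then give $x_1x_2=x_1x_3=x_2x_3=0$, so $(x_1,x_2,x_3)$ lies in the six-point set $\{(\pm1,0,0),(0,\pm1,0),(0,0,\pm1)\}$, and continuity in $t$ together with the central values \eqref{eq:central} pins down the answer. Note that this discreteness-plus-continuity step already locks the sign of $x_1$, so your preliminary reality discussion, while harmless, is not needed; the missing ingredient is precisely the degree argument above.
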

\begin{proof}
Fix $t$ and $\lambda\in\{\lambda_1,\lambda_2\}$. By the second item of the Monodromy Problem \eqref{eq:monodromy-problem2}, the monodromy representation of $\PhiDPW$ is diagonal.
Hence the rows of $\PhiDPW$ define two invariant line bundles, and
the connection $\nabla=d+\eta$ is  reducible (and unitary).
By Lemma 11 in \cite{Fuchsian}, $\eta$ is conjugate (by a constant matrix) to a diagonal potential. In other words, the matrices $A_1,\cdots A_4$ are simultaneously diagonalizable and commute. But
$$[A_1,A_2]=\begin{pmatrix}-4i x_2 x_3& 4i x_1 x_3\\4i x_1 x_3&4 i x_2 x_3\end{pmatrix}\quad\text{ and }\quad
[A_1,A_4]=\begin{pmatrix} 4 i x_2 x_3& 4x_1 x_2\\-4x_1 x_2&-4i x_2 x_3\end{pmatrix}$$
so two amongst the three $\{x_1$, $x_2$, $x_3\}$ are zero. Since $x_1^2+x_2^2+x_3^2=1$, we have
$$(x_1,x_2,x_3)\in\{(\pm 1, 0,0),(0,\pm 1,0),(0,0,\pm 1)\}.$$
Now $(x_1,x_2,x_3)$ depends continuously on $t$ and takes values in a discrete set so is constant. At $t=0$, we have $\lambda_1=-i$ and $\lambda_2=i$. We conclude using the values of $x_1,x_2,x_3$ at $t=0$ given in \eqref{eq:central}.
\end{proof}
\subsection{Regularizing gauges}
As in \cite{HHT2}, to compute the Willmore energy or the enclosed volume, we need to work on a double cover of $\Sigma_g$ so we consider instead the equation
$$y^{2g+2}=\frac{(z-p_1)(z-p_3)}{(z-p_2)(z-p_4)}.$$
The lifted potential $\wt\eta_g$ has a simple pole at $\wt p_1,\cdots,\wt p_4$ with residue
$\Res_{\wt p_k}\wt\eta_g=(2g+2)sA_k= A_k.$
Let $w$ be a local coordinate in a neighborhood of $\wt p_1$ such that $w(p_1)=0$.
Then,
$$\wt\eta_g=A_k\frac{dw}{w}+O(w^{2g+1})dw.$$
The regularizing gauge at $p_1$ used in \cite{HHT2} is
$$G_1(z,\lambda)=\begin{pmatrix}1&0\\ \kappa_1&1\end{pmatrix}
\begin{pmatrix}w^{-1}&0\\0&w\end{pmatrix}
\quad\text{ with }\quad
\kappa_1=\frac{1-x_1}{x_2+i x_3}$$
However, by Proposition \ref{prop:diagonal}, $\kappa_1(\lambda_1)=\infty$. This is not a problem for computing the immersion (see Remark 21 in \cite{HHT2}) or the Willmore energy, but we cannot compute the holonomy using this gauge since it is not defined at $\lambda= \lambda_1$.

This problem can be fixed by considering the following gauge instead:
\begin{equation}
\label{eq:true-gauge}
G_1(z,\lambda)=\begin{pmatrix}\frac{x_2+i x_3}{\Delta}&-1\\\frac{1-x_1}{\Delta}& e^{i\varphi}\end{pmatrix}
\begin{pmatrix}w^{-1}&0\\0&w\end{pmatrix}
\quad\text{ with }\quad
\Delta=1-x_1+e^{i\varphi}(x_2+i x_3).
\end{equation}
\begin{Pro}\label{prop10}
For $t$ small enough, $G_1$ is a regularizing gauge at $p_1$ which is holomorphic on the closed unit $\lambda$-disk.
\end{Pro}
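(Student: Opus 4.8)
The plan is to treat the two assertions separately: that $G_1$ regularizes $\eta$ at $p_1$ (which I expect to hold for all small $t$ by a purely algebraic check), and that $N_1$ is holomorphic on the closed unit $\lambda$-disk $\overline{\mathbb D}$ (which is where the smallness of $t$ genuinely enters). Throughout I write $v=(x_2+ix_3,\,1-x_1)^{T}$ for the first column of $N_1$ cleared of the factor $\tfrac1\Delta$, so that the first column of $N_1$ is $v/\Delta$ and the second column is the constant vector $(-1,e^{i\varphi})^{T}$.

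For the regularizing property I would first verify, using $x_1^2+x_2^2+x_3^2=1$ (Remark \ref{etarescaled}), the two identities $A_1 v=v$ and $\det N_1=1$; both are one-line computations. The first says that $v/\Delta$ is an eigenvector of $\Res_{p_1}\eta=A_1$ for the eigenvalue $k_1=1$, exactly as required by Remark \ref{remark:N_j}. Consequently $N_1^{-1}A_1N_1$ is upper triangular, so its $(2,1)$-entry vanishes; since on the double cover $\wt\eta_g=A_1\tfrac{dw}{w}+O(w^{2g+1})\,dw$, the $(2,1)$-entry of $N_1^{-1}\wt\eta_g N_1$ is only $O(w^{2g+1})$. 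Conjugating further by $\mathrm{diag}(w^{-1},w)$ exactly as in the proof of Lemma \ref{lemma:preliminary} multiplies this entry by $w^{-2}$, leaving $O(w^{2g-1})$, while the remaining entries stay holomorphic. Hence $\eta. G_1$ extends holomorphically at $p_1$ for $g\geq 1$, and none of this uses smallness of $t$.

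The substance of the proposition is holomorphicity of $N_1$ on $\overline{\mathbb D}$, i.e.\ of $v/\Delta$. Its only possible singularities are at $\lambda=0$, where $x_1,x_2,x_3$ have simple poles, and at the zeros of $\Delta=(1-x_1)+e^{i\varphi}(x_2+ix_3)$. At $\lambda=0$ both $v$ and $\Delta$ carry a simple pole, so $v/\Delta$ is holomorphic there as soon as $\Res_{\lambda=0}\Delta\neq 0$; the prescribed residues give $\Res_{\lambda=0}\Delta=-i$, which is nonzero (up to the harmless $t/s$ scaling). For the zeros of $\Delta$ I would argue by continuity from $t=0$: there the explicit values \eqref{eq:central} yield $\Delta=(\lambda-i)/\lambda$, whose only zero in $\overline{\mathbb D}$ is the \emph{simple boundary zero} at $\lambda=i=\lambda_2$. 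Thus for $t$ small, $\Delta$ stays bounded away from $0$ on every compact subset of $\overline{\mathbb D}\setminus\{0\}$ avoiding a fixed neighborhood of $i$, and inside that neighborhood it retains a single simple zero, which must sit at $\lambda_2=e^{i\theta(t)}$ because $\Delta(\lambda_2)=0$ by Proposition \ref{prop:diagonal}.

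The crucial and most delicate point—and the very reason the naive gauge with $\kappa_1=(1-x_1)/(x_2+ix_3)$ is unusable—is this boundary zero of $\Delta$ at the Sym point $\lambda_2$. Here I would invoke Proposition \ref{prop:diagonal}: at $\lambda_2$ one has $(x_1,x_2,x_3)=(1,0,0)$, so $v(\lambda_2)=(0,0)^{T}=0$. Since $\Delta$ has a simple zero at $\lambda_2$ while $v$ vanishes there, the quotient $v/\Delta$ extends holomorphically across $\lambda_2$. Combining the three regions shows $N_1$ is holomorphic on all of $\overline{\mathbb D}$, and with $\det N_1=1$ this gives $N_1\in\Lambda_+\mathrm{SL}(2,\C)$, completing the proof. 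The main obstacle is exactly controlling this cancellation on the unit circle: one must know both that $\Delta$ has no spurious zeros in the disk away from $\lambda_2$ (handled by the continuity and compactness argument from $t=0$) and that its boundary zero is matched by a zero of the eigenvector $v$ (handled structurally by Proposition \ref{prop:diagonal}).
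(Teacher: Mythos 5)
Your proof is correct and follows essentially the same route as the paper: holomorphy at $\lambda=0$ from the simple poles, a continuity argument from the explicit $t=0$ value of $\Delta$ to locate its unique (simple) zero at $\lambda_2$, cancellation against the vanishing of the numerator there via Proposition \ref{prop:diagonal}, and a verification of the regularizing property using $x_1^2+x_2^2+x_3^2=1$. The only cosmetic difference is that you check regularization structurally through the eigenvector criterion of Remark \ref{remark:N_j} and Lemma \ref{lemma:preliminary}, where the paper performs the matrix computation of $\wt\eta_g. G_1$ directly; you also make explicit the vanishing of the numerator at $\lambda_2$, which the paper leaves implicit.
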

\begin{proof}
First of all, $G_1$ extends holomorphically at $\lambda=0$ because $x_1,x_2,x_3$ and
$\Delta$ have simple poles there.
By Proposition \ref{prop:diagonal}, $\Delta$ has a zero at $\lambda= \lambda_2$.
At $t=0$, we have
$$\Delta=1-\tfrac{i}{2}(\lambda^{-1}-\lambda)-\tfrac{1}{2}e^{i\varphi}(\sin(\varphi)+i\cos(\varphi))(\lambda^{-1}+\lambda)=1-i\lambda^{-1}.$$
Hence for small $t$, the zero at $\lambda= \lambda_2$ is simple and $\Delta$ has no other zero in the closed unit disk.
So $G_1$ extends holomorphically to $\lambda= \lambda_2$ and is holomorphic in the closed unit disk.
A computation, using $x_1^2+x_2^2+x_3^2=1$, gives
$$\wt\eta_g. G_1=\begin{pmatrix}0 & 2i e^{i\varphi}\left(i x_1+x_2\sin(\varphi)+x_3\cos(\varphi)\right)w\\ 0 & 0\end{pmatrix}dw+O(w^{2g})dw$$
which is holomorphic at $\wt p_1$.
\end{proof}
To compute the holonomy of $\mathcal D$, it will be easier to use the following regularizing gauge:
\begin{equation}
\label{eq:fake-gauge}
G_1=\frac{1}{\sqrt{\Delta}}\begin{pmatrix}x_2+i x_3& x_1-1\\1-x_1&x_2-i x_3\end{pmatrix} \begin{pmatrix}z^{-1}&0\\0&z\end{pmatrix}
\quad\text{ with }\quad
\Delta=x_2^2+x_3^2+1-2x_1+x_1^2=2(1-x_1).
\end{equation}
Note that this gauge is not holomorphic in the unit disk nor well-defined, since 
$\Delta$ has a simple pole at $\lambda=0$.
At $\lambda= \lambda_2$, $1-x_1$ has a double zero, so $\sqrt{\Delta}$ has a simple zero and thus
$G_1$ extends holomorphically to $\lambda= \lambda_2$.
At the central value $t=0$, we have
$\Delta=i\lambda^{-1}(\lambda-i)^2$
so for $t$ small enough, $\Delta$ has no other zero.
So $G_1$ is well defined on the closed arc from $\lambda_1$ to $\lambda_2$ and can be used to compute the holonomy of $\mathcal D$ on $\mathcal L$ by Remark \ref{remark:fake-gauge}.
\begin{Rem}
Note that all the gauges we considered in this section have the form \eqref{eq:Rj}, and
the first column is always collinear to $(x_2+i x_3,1-x_1)$ which is an eigenvector of
$A_1$, as predicted by Remark \ref{remark:N_j}.
\end{Rem}
\subsection{Holonomy}
 We return to the problem of computing the holonomy $\mathcal D$ of the Chern-Simons line bundle $\mathcal L$ over $\mathcal M$ along the curve $[\nabla^\lambda]$ for $\lambda \in S^1.$
\begin{The}\label{The:LeV} The holonomy of $\mathcal D$ along $\gamma= [\nabla^\lambda]$ of the Lawson CMC family is given by
\label{prop:holonomy-fake-gauge}
\begin{equation}\label{eq:LEV}\log[\Hol(\mathcal D,\gamma)]=4\log\left[\frac{x_2'(\lambda_2)-i x_3'(\lambda_2)}{x_2'(\lambda_2)+i x_3'(\lambda_2)}\right]
+4 i\int_{\lambda_1}^{\lambda_2}\left(x_2 x'_3-x_3 x'_2\right)\frac{d\lambda}{1-x_1}\end{equation}
where $()'$ denotes the derivative with respect to $\lambda$.
\end{The}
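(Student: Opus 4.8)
The plan is to feed the four (lifted) punctures $\wt p_1,\dots,\wt p_4$ into Theorem \ref{theorem:regularizing-volume}: they are simple poles with $k_j=1$ and residues $\Res_{\wt p_j}\wt\eta_g=A_j$. For $\wt p_1$ I would use the gauge \eqref{eq:fake-gauge}, which is admissible by Remark \ref{remark:fake-gauge} since it is defined on the arc from $\lambda_1$ to $\lambda_2$; for the other three I take the analogous gauges $G_j=\tfrac{1}{\sqrt{\Delta_j}}P_j\,\mathrm{diag}(z^{-1},z)$ whose first column is the $+1$-eigenvector of $A_j$ (Remark \ref{remark:N_j}), where writing $A_j=\minimatrix{a&b\\c&-a}$ one sets $P_j=\minimatrix{b&a-1\\1-a&c}$ and $\Delta_j=2(1-a)=\det P_j$. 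As the constant gauge with $\eta^{\lambda_2}=\eta^{\lambda_1}.h$ I take the antidiagonal $h=\minimatrix{0&1\\-1&0}$: by Proposition \ref{prop:diagonal} both connections are diagonal at the Sym points with opposite diagonal entries, and conjugation by $h$ swaps them.

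Next I would dispose of the integral terms $I_j=\int_{\lambda_1}^{\lambda_2}\tr\big(\Res_{\wt p_j}\eta\,\partial_\lambda G_j\,G_j^{-1}\big)\,d\lambda$ by an algebraic identity. Because $A_j$ is traceless the scalar $\tfrac{1}{\sqrt{\Delta_j}}$ drops out of $\tr(A_j\,\partial_\lambda G_j\,G_j^{-1})=\tr(A_j\,\partial_\lambda N_j\,N_j^{-1})=\tr(A_j P_j'P_j^{-1})$, and a short computation gives the clean value $\tr(A_jP_j'P_j^{-1})=\dfrac{b'c-bc'}{\Delta_j}$. For $\wt p_1$ (where $a=x_1$, $b=x_2+ix_3$, $c=x_2-ix_3$) this is $\dfrac{i(x_2x_3'-x_3x_2')}{1-x_1}$. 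For $\wt p_3$ the conjugation $A_3=\sigma A_1\sigma$, $N_3=\sigma N_1\sigma$ with $\sigma=\mathrm{diag}(1,-1)$ leaves the trace unchanged, so $I_3=I_1$ pointwise. For $\wt p_2,\wt p_4$ one has $a=-x_1$, hence the integrand $\dfrac{-i(x_2x_3'-x_3x_2')}{1+x_1}$; here I would invoke the involution $\lambda\mapsto 1/\lambda$ of the construction, under which $x_1\mapsto -x_1$, $x_2,x_3$ are invariant and $\lambda_1\leftrightarrow\lambda_2$, to show by the change of variables that $\int_{\lambda_1}^{\lambda_2}\dfrac{-i(x_2x_3'-x_3x_2')}{1+x_1}\,d\lambda=I_1$. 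Thus all four integral terms coincide and sum to $4i\int_{\lambda_1}^{\lambda_2}(x_2x_3'-x_3x_2')\dfrac{d\lambda}{1-x_1}$, the second summand of \eqref{eq:LEV}.

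The boundary terms $-2\sum_j\log a_j$ are the subtle part, since the regularizing gauge degenerates precisely at the Sym point where the connection becomes reducible. For $\wt p_1,\wt p_3$ the matrix $N_j$ degenerates at $\lambda_2$: by (the proof of) Proposition \ref{prop10}, $1-x_1$ has a double zero there while $x_2,x_3$ vanish simply, so $\tfrac{1}{\sqrt{\Delta_1}}P_1$ has the finite diagonal limit $\tfrac{1}{\sqrt{2c}}\mathrm{diag}\big(x_2'(\lambda_2)+ix_3'(\lambda_2),\,x_2'(\lambda_2)-ix_3'(\lambda_2)\big)$, where $2c=x_2'(\lambda_2)^2+x_3'(\lambda_2)^2$ is forced by $\det N_1\equiv 1$. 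Evaluating $a_j=(N_j(\lambda_1)^{-1}hN_j(\lambda_2))_{11}$ (which Lemma \ref{lemma:preliminary} guarantees is the upper-left entry of an upper-triangular matrix) yields $a_1=-R$ and $a_3=+R$ with $R=\big(\tfrac{x_2'(\lambda_2)+ix_3'(\lambda_2)}{x_2'(\lambda_2)-ix_3'(\lambda_2)}\big)^{1/2}$. For $\wt p_2,\wt p_4$ the gauge degenerates at $\lambda_1$ instead; using $\lambda\mapsto 1/\lambda$ to trade the $\lambda_1$-derivatives of $x_2,x_3$ for $\lambda_2$-derivatives gives $a_2=-R$, $a_4=+R$ as well. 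Summing, $-2\sum_j\log a_j=-8\log R$ up to an integer multiple of $2\pi i$, which is $4\log\dfrac{x_2'(\lambda_2)-ix_3'(\lambda_2)}{x_2'(\lambda_2)+ix_3'(\lambda_2)}$ modulo $2\pi i$, the first summand of \eqref{eq:LEV}. Adding the two contributions produces the asserted formula.

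I expect the main obstacle to be exactly these boundary terms: controlling the limit of the regularizing gauge at the reducible Sym point and identifying its finite diagonal value, and making the involution $\lambda\mapsto 1/\lambda$ rigorous for all $t$ (not merely at $t=0$, where it is visible from \eqref{eq:central}) so that the $\wt p_2,\wt p_4$ data living at $\lambda_1$ can be rewritten through $\lambda_2$. Careful bookkeeping of the $\log$-branches will be required to ensure that the sign ambiguities in the $a_j$ and the spurious multiples of $2\pi i$ drop out of the holonomy, which is only defined modulo $2\pi i$.
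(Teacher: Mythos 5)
Your treatment of $\wt p_1$ (and $\wt p_3$) coincides with the paper's: the same application of Theorem \ref{theorem:regularizing-volume} via Remark \ref{remark:fake-gauge} with the gauge \eqref{eq:fake-gauge}, the same constant gauge $h$ up to an irrelevant sign, the same integrand $\tr(A_1N_1'N_1^{-1})=i(x_2x_3'-x_3x_2')/(1-x_1)$, and the same evaluation of the degenerate limit $N_1(\lambda_2)=\mathrm{diag}(\ell,\ell^{-1})$ giving $-2\log a_1=\log\bigl[\tfrac{x_2'(\lambda_2)-ix_3'(\lambda_2)}{x_2'(\lambda_2)+ix_3'(\lambda_2)}\bigr]$ modulo $2\pi i$. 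Up to this point the proposal is correct and essentially identical to the paper's argument.

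The gap is in your handling of $\wt p_2$ and $\wt p_4$. Building $N_j$ intrinsically from the eigenvector data of $A_j$ gives you $\Delta_j=2(1+x_1)$, an integrand $-i(x_2x_3'-x_3x_2')/(1+x_1)$, and boundary data concentrated at $\lambda_1$; to match these with the $\wt p_1$ contribution you invoke an involution $\lambda\mapsto 1/\lambda$ under which $(x_1,x_2,x_3)\mapsto(-x_1,x_2,x_3)$ and $\lambda_1\leftrightarrow\lambda_2$. That symmetry is only exhibited at $t=0$ (equation \eqref{eq:central}); it is not established anywhere in the paper for $t>0$, and proving it would require re-entering the implicit-function-theorem construction of the $x_j$ in \cite{HHT2}. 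As written, the identification of the $\wt p_2,\wt p_4$ terms with $I_1$ and with $\pm a_1$ therefore does not go through. The paper avoids the issue entirely: since $A_k=C_kA_1C_k^{-1}$ with the constant matrices $C_k$ of \eqref{eq:Cs}, one may take $N_k=C_kN_1$ as the regularizing data at $\wt p_k$ (its first column is automatically the $+1$-eigenvector of $A_k$, and $C_kN_1$ differs from your $N_k$ by a right upper-triangular factor, so Remark \ref{remark:fake-gauge} applies). Conjugation invariance of the trace then gives $\tr(A_kN_k'N_k^{-1})=\tr(A_1N_1'N_1^{-1})$ \emph{identically in} $\lambda$, and $N_k(\lambda_1)^{-1}hN_k(\lambda_2)=N_1(\lambda_1)^{-1}(C_k^{-1}hC_k)N_1(\lambda_2)$ with $C_k^{-1}hC_k=\pm h$, so $a_k=\pm a_1$ and all four boundary terms agree modulo $2\pi i$. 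Replacing your intrinsic gauges at $\wt p_2,\wt p_4$ by these conjugated ones removes the only unproved step; the rest of your argument, including the branch bookkeeping modulo $2\pi i$, is sound.
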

\begin{proof}
We use Theorem \ref{theorem:regularizing-volume} with the gauge defined in \eqref{eq:fake-gauge}.
By Proposition \ref{prop:diagonal} we have $\eta^{\lambda_2}=-\eta^{\lambda_1},$
so we may take 
$$h=\begin{pmatrix} 0& -1\\1 &0\end{pmatrix}$$
as the (constant) gauge between $\eta^{\lambda_1}$ and $\eta^{\lambda_2}$.
Let
$$N_1=\frac{1}{\sqrt{\Delta}}\begin{pmatrix}x_2+i x_3& x_1-1\\1-x_1&x_2-i x_3\end{pmatrix}.$$
Then
$$N_1(\lambda_1)=\begin{pmatrix}0&-1\\1&0\end{pmatrix}=h\quad
\mathrm{and}
\quad N_1(\lambda_2)=\begin{pmatrix}\ell& 0\\ 0&\ell^{-1}\end{pmatrix}$$
with
$$\ell=\lim_{\lambda\to\lambda_2}\frac{x_2+i x_3}{\sqrt{2(1-x_1)}}
=\lim_{\lambda\to\lambda_2}\frac{x_2+i x_3}{\sqrt{1-x_1^2}}
=\lim_{\lambda\to\lambda_2}\sqrt{\frac{x_2+i x_3}{x_2-i x_3}}.$$
This gives $a_1=\ell$ and
$$-2\log a_1=-2\log\ell=\log\left[\lim_{\lambda\to\lambda_2}\frac{x_2-i x_3}{x_2+i x_3}\right]
=\log \left[\frac{x_2'(\lambda_2)-i x_3'(\lambda_2)}{x_2'(\lambda_2)+i x_3'(\lambda_2)}\right].$$
The computation of $\tr(A_1 N'_1 N_1^{-1})$ is quite simple:
$$N_1^{-1} A_1=\frac{1}{\sqrt{\Delta}}\begin{pmatrix} x_2-i x_3&1-x_1\\1-x_1& -x_2-i x_3\end{pmatrix}$$
$$N'_1 N_1^{-1} A_1=\frac{1}{\Delta}\begin{pmatrix}x'_2+i x'_3&x'_1\\-x'_1&x'_2-i x'_3\end{pmatrix}\begin{pmatrix} x_2-i x_3&1-x_1\\1-x_1& -x_2-i x_3\end{pmatrix}
-\frac{\Delta'}{2\Delta} A_1$$
and since $\tr(A_1)=0$ we obtain
$$\tr(N'_1 N_1^{-1} A_1)=\frac{2i x_2 x'_3-2i x_3 x'_2}{\Delta}.$$
So the contribution of $\wt p_1$ to the holonomy is
$$\log\left[\frac{x_2'(\lambda_2)-i x_3'(\lambda_2)}{x_2'(\lambda_2)+i x_3'(\lambda_2)}\right]
+ i\int_{\lambda_1}^{\lambda_2}\left(x_2 x'_3-x_3 x'_2\right)\frac{d\lambda}{1-x_1}.$$
By symmetry, the contribution of the three other poles $\wt p_2$, $\wt p_3$ and $\wt p_4$ is the same. More precisely, the symmetries are encoded into $A_k=C_k A_1 C_k^{-1}$ with
\begin{equation}\label{eq:Cs}C_2=\begin{pmatrix}0&-1\\1&0\end{pmatrix},\quad
C_3=\begin{pmatrix}i&0\\0&-i\end{pmatrix}\quad\text{ and }\quad
C_4=\begin{pmatrix}0&i\\i&0\end{pmatrix}.\end{equation}
We take $N_k=C_k N_1$. Then for $k=2,3,4$ it holds
$\tr(A_k N'_k N_k)=\tr(A_1 N'_1 N_1)$
and
$$N_k(\lambda_1)^{-1} h N_k(\lambda_2)=N_1(\lambda_1)^{-1} C_k^{-1} h C_k N_1(\lambda_2).$$
Since
$$C_2^{-1}h C_2=h,\quad C_3^{-1} h C_3=-h\quad\text{ and }\quad
C_4^{-1} h C_4=-h$$
we obtain $a_2=a_1$ and $a_4=a_3=-a_1$, hence modulo $2\pi i$ all terms are equal.
\end{proof}

\subsection{Second order expansion of the enclosed volume}
In this section, we compute the second order expansion of the enclosed volume $\mathcal V(f_g)$ with respect to $s=\frac{1}{2g+2}$.
Since we are working on a double cover when computing the holonomy, we have to double the Willmore energy and the volume. Hence by Theorem \ref{thm:holev}
$$\log\Hol(\mathcal D,\gamma)=\frac{i}{4\pi}(2\theta-\sin(2\theta))2\mathcal W-\frac{i}{\pi}2\mathcal V$$
which gives
\begin{equation}
\label{eq:volume}\mathcal V=\frac{(2\theta-\sin(2\theta))}{4}\mathcal W-\frac{\pi}{2i}\log\Hol(\mathcal D,\gamma)\;\mod \pi^2.
\end{equation}
The following quantities will play an important role in our computations:
\begin{equation*}
\begin{split}\WWW&=\cos^2(\varphi)\log(\cos(\varphi))+\sin^2(\varphi)\log(\sin(\varphi))\\
\TTT&=\sin(2\varphi)\log(\tan(\varphi)).
\end{split}
\end{equation*}
We already computed the second order expansion of $\mathcal W$ and $\theta$
in \cite{HHT2}:
$$\mathcal W(f_g)=8\pi+16 \pi \WWW\, s+O(s^3).$$
$$\theta(s)=\frac{\pi}{2}+2 \TTT\, s+O(s^3).$$
\begin{The}
\label{theorem:volume-order2}
The second order expansion of the enclosed volume of Lawson CMC surfaces is
$$\mathcal V(f_g)=2\pi^2-4\pi\varphi 
+ 16\pi \TTT s
+ 16\pi \WWW\TTT s^2+O(s^3)
\quad\text{ with }\quad s=\frac{1}{2g+2}.
$$
\end{The}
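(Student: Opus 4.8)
The plan is to read off $\mathcal V$ from the master identity \eqref{eq:volume}, treating the first summand with the known expansions of $\mathcal W$ and $\theta$ from \cite{HHT2} and the second with the holonomy formula \eqref{eq:LEV} of Theorem \ref{The:LeV}. Substituting $\theta(s)=\tfrac{\pi}{2}+2\TTT s+O(s^3)$ and using $\sin(\pi+x)=-\sin x$ gives $\tfrac14(2\theta-\sin 2\theta)=\tfrac{\pi}{4}+2\TTT s+O(s^3)$; multiplying by $\mathcal W=8\pi+16\pi\WWW s+O(s^3)$ yields
\[
\tfrac14(2\theta-\sin 2\theta)\,\mathcal W=2\pi^2+(4\pi^2\WWW+16\pi\TTT)\,s+32\pi\WWW\TTT\, s^2+O(s^3).
\]
In view of \eqref{eq:volume} the theorem is therefore equivalent to the statement
\[
\log\Hol(\mathcal D,\gamma)=8i\varphi+8\pi i\WWW s+32i\WWW\TTT s^2+O(s^3)\pmod{2\pi i}.
\]

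Next I would expand the right-hand side of \eqref{eq:LEV} in $s$, feeding in the order-two expansions of $x_1,x_2,x_3$ as functions of $\lambda$ furnished by the iterated integrals of \cite{HHT2} (explicit elementary functions of $\varphi$), together with the moving Sym point $\lambda_2=e^{i\theta(s)}=i\,e^{2i\TTT s+O(s^3)}$. At $s=0$ equation \eqref{eq:central} gives $x_2'(\lambda_2)=-\sin\varphi$ and $x_3'(\lambda_2)=-\cos\varphi$, whence the boundary (logarithm) term equals $4\log(-e^{2i\varphi})\equiv 8i\varphi\pmod{2\pi i}$, reproducing the leading term; meanwhile the numerator $x_2x_3'-x_3x_2'$ of the integral vanishes identically at $s=0$, so the integral is $O(s)$. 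The work is then to push both contributions to order $s^2$: the boundary term through the second-order Taylor expansion of $x_2'\mp i x_3'$ evaluated at the moving point $\lambda_2(s)$, and the integral through expansion of its integrand together with the $s$-dependence of the endpoints $\lambda_1,\lambda_2$.

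The delicate ingredient is the integral $4i\int_{\lambda_1}^{\lambda_2}(x_2x_3'-x_3x_2')\tfrac{d\lambda}{1-x_1}$. By Proposition \ref{prop:diagonal} the denominator $1-x_1$ and both $x_2,x_3$ vanish at $\lambda_2$ for every $s$; as in the proof of Proposition \ref{prop10}, $1-x_1$ has a double zero while $x_2,x_3$ have simple zeros there, so the leading term of the Wronskian-type numerator cancels, the integrand stays bounded and the integral converges. I would therefore first resolve this endpoint behaviour — e.g. by a local change of variable centred at $\lambda_2$ — so that differentiation under the integral sign and differentiation of the upper limit $\lambda_2(s)$ are both justified, and then collect the $s^1$- and $s^2$-coefficients; they should produce $8\pi i\WWW$ and $32i\WWW\TTT$ respectively. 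The four punctures contribute equally by the symmetry $A_k=C_kA_1C_k^{-1}$ of \eqref{eq:Cs}, exactly as already exploited in the proof of Theorem \ref{The:LeV}, so it suffices to analyse $\wt p_1$.

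Finally I would pin down the $\pmod{\pi^2}$ ambiguity in \eqref{eq:volume} by continuity in $\varphi$ together with the normalization $\mathcal V=\pi^2$ for the Lawson minimal surface $\xi_{1,g}$ at $\varphi=\pi/4$; note that $\TTT(\pi/4)=0$ forces all $s$-dependence to disappear there, consistent with the orientation-reversing symmetry that fixes the enclosed volume at $\pi^2$, a useful check on both the branch choice and the coefficients. Assembling the three expansions then gives the stated formula. I expect the principal obstacle to be the combined second-order analysis of this singular integral: controlling the boundary layer at $\lambda_2$, propagating the order-two iterated-integral expansions of $x_1,x_2,x_3$ simultaneously through the integrand and the moving limit, and verifying that the $\WWW$- and $\TTT$-structure survives the cancellations.
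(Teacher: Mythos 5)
Your proposal follows essentially the same route as the paper: it combines the identity \eqref{eq:volume} with the known expansions of $\mathcal W$ and $\theta$, and then Taylor-expands the holonomy formula of Theorem \ref{The:LeV} to second order (central value $8i\varphi$, first order $8\pi i\,\WWW$, second order $32 i\,\WWW\TTT$), which is exactly the paper's three-step computation, and your intermediate expansions, the target expansion of $\log\Hol(\mathcal D,\gamma)$, and the normalization check at $\varphi=\pi/4$ are all correct. The only remaining work is the mechanical evaluation of the first and second $t$-derivatives of the boundary term and of the integral with moving endpoints using the explicit derivatives of $x_1,x_2,x_3$ from \cite{HHT2}; your worry about a boundary layer at $\lambda_2$ is unnecessary, since the integrand vanishes identically at $t=0$ and extends holomorphically across $\lambda_2$ (double zero of the Wronskian over the double zero of $1-x_1$), so the standard Leibniz rule for differentiating under the integral sign and in the endpoints applies directly.
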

We prove Theorem \ref{theorem:volume-order2} in the next three sections using Proposition \ref{prop:holonomy-fake-gauge}.
We will in fact compute the order 2 expansion with respect to the parameter $t$.
But since $s(t)=t+O(t^3)$, this gives the order 2 expansion with respect to $s$ as well.

\subsubsection{Central value}
By Equations \eqref{eq:central}, we have at $t=0$
$$\frac{x'_2-i x'_3}{x'_2+i x'_3}=-e^{2i\varphi}$$
$$x_2 x'_3-x_3 x'_2=0$$
so by Proposition \ref{prop:holonomy-fake-gauge}
$$\log\Hol(\gamma)=8 i\varphi \mod 2\pi i$$
which gives
$$\mathcal V_0=2\pi^2-4\pi\varphi\mod \pi^2.$$
\begin{Rem}
We know that for large genus, the surface looks like two great spheres intersecting with an angle $2\varphi$ and desingularized using a Scherk surface whose wings make an angle $2\varphi$. Moreover, one can check using the Weierstrass data computed in the proof of Point 5 of Proposition 19 of \cite{HHT2} that the normal of the limit Scherk surface points towards the wedge of angle $2\varphi$. Therefore, the limit enclosed volume is the complement of the volume of a wedge of angle $2\varphi$ in $\S^3$. %{\color{red}So we can remove the modulo $\pi^2$ indetermination.}
\end{Rem}
\subsubsection{First order derivative}
We use $\dot{()}$ to denote derivatives with respect to $t$ at $t=0$.
By Proposition 31 in \cite{HHT2}, the first order derivatives of the parameters are 
\begin{equation}
\label{eq:centralder}
\begin{split}
\dot{x}_1&=i \left(\lambda ^2+1\right) \sin (2
   \varphi ) \log (\tan (\varphi ))
   \\
\dot{x}_2&=-2 \lambda ^2 \cos (\varphi ) \log
   (\cos (\varphi ))-2 \sin (\varphi )
   \sin (2 \varphi ) \log (\sin (\varphi
   ))-2 \cos (\varphi ) \cos (2 \varphi )
   \log (\cos (\varphi ))
 \\
 \dot{x}_3&=2 \lambda ^2 \sin (\varphi ) \log
   (\sin (\varphi ))-2 \sin (\varphi )
   \cos (2 \varphi ) \log (\sin (\varphi
   ))+2 \cos (\varphi )\sin (2 \varphi ) 
   \log (\cos (\varphi ))
\end{split}
\end{equation}
Let
\begin{equation*}
\begin{split}
f(t,\lambda)&=\log\left[ \frac{x_2'(t,\lambda)-i x_3'(t,\lambda)}{x_2'(t,\lambda)+i x_3'(t,\lambda)}\right]\\
g(t,\lambda)&=\frac{x_2 x'_3-x_3 x'_2}{1-x_1}.
\end{split}
\end{equation*}
Since $f(0,\lambda)$ does not depend on $\lambda$,
\begin{align*}
\frac{d}{dt}f(t,e^{i\theta(t)})\mid_{t=0}&=\pd{f}{t}(0,i)
=\left.\frac{\dot{x}'_2-i\dot{x}'_3}{x'_2-i x'_3}-\frac{\dot{x}'_2+i\dot{x}'_3}{x'_2+i x'_3}\right|_{\lambda=i}
\\
&=-2i\cos(\varphi)\dot{x}_2'+2i\sin(\varphi)\dot{x}_3'\mid_{\lambda=i}
=-8 \WWW.
\end{align*}
Regarding the second term in Proposition \ref{prop:holonomy-fake-gauge}, since $x_2 x'_3-x_3 x'_2=0$ at $t=0$, we have
$$\frac{d}{dt}\int_{\lambda_1(t)}^{\lambda_2(t)}
g(t,\lambda)\,d\lambda=\int_{-i}^i \pd{g}{t}(0,\lambda)\, d\lambda.$$
We find after simplification
\begin{equation}
\label{eq:x2dotx3p}
\dot{x}_2 x'_3+x_2\dot{x}'_3-\dot{x}_3 x'_2-x_3 \dot{x}'_2=
-\frac{(\lambda^2+1)^2}{\lambda^2}\WWW,
\end{equation}
thus
\begin{equation}
\label{eq:dgdt}
\pd{g}{t}(0,\lambda)=\frac{\dot{x}_2 x'_3-\dot{x}_3 x'_2+x_2\dot{x}'_3-x_3 \dot{x}'_2}{1-x_1}=2i\lambda^{-1}(\lambda+i)^2\WWW.
\end{equation}
Integrating both side then yields 
$$\frac{d}{dt}\int_{\lambda_1(t)}^{\lambda_2(t)}
g(t,\lambda)d\lambda
=(-8i + 2\pi)\WWW.$$
Altogether, we obtain by Proposition \ref{prop:holonomy-fake-gauge}
$$\frac{d}{dt}\log\Hol(\mathcal D,\gamma)\mid_{t=0}=
8\pi i\WWW
=\frac{i}{2}\dot{\mathcal W}.$$
\begin{Rem} Notice that the real terms cancel, as they should, since the holonomy is unitary.
\end{Rem}
By Equation \eqref{eq:volume},
$$\dot{\mathcal V}=\dot{\theta}\,\mathcal W_0+\frac{\pi}{4}\dot{\mathcal W}-\frac{\pi}{4}\dot{\mathcal W}=8\pi \dot{\theta}=16\pi \TTT.$$
\subsubsection{Second order derivative}
The second order derivatives of the parameters are:
\begin{align*}
\ddot{x}_1=&8 i \lambda  (\log (\tan(\varphi))
   \left[\cos ^2(\varphi ) \log (\cos (\varphi ))\left(\lambda ^2+4 \cos (2 \varphi
   )-1\right) +\sin ^2(\varphi ) \log (\sin
   (\varphi ))
   \left(-\lambda ^2+4 \cos (2 \varphi )+1\right)\right]\\
   &-\tfrac{i}{4} (\lambda^{-1}-\lambda) \ddot{\mathcal K}
\\
\ddot{x}_2=&4 \lambda  \sin (\varphi ) \log (\cos (\varphi ))
   \left[
   -2\cos ^2(\varphi ) \log (\cos (\varphi )) \left(\lambda^2-3\right)
   +\log (\sin (\varphi ))
   \left(\cos (2 \varphi )\left(\lambda ^2-3\right)+3\lambda ^2-1\right)
   \right]\\
   &+\tfrac{1}{4} \sin(\varphi)(\lambda^{-1}+\lambda) \ddot{\mathcal K}
   \\
\ddot{x}_3=&4 \lambda  \cos (\varphi ) \log (\sin (\varphi )) \left[-2
   \sin ^2(\varphi ) \log (\sin
   (\varphi ))\left(\lambda ^2-3\right)+\log (\cos (\varphi ))\left(-\cos (2 \varphi
   )\left(\lambda ^2-3\right)+3 \lambda ^2-1\right)\right]\\
   &+\tfrac{1}{4} \cos(\varphi)(\lambda^{-1}+\lambda) \ddot{\mathcal K}.
\end{align*}
They are computed in Appendix D of \cite{HHT2}.
\begin{Rem}We took into account that the parameters have been scaled by $\frac{1}{\sqrt{\mathcal K}}=\frac{t}{s}$ which adds the $\ddot{\mathcal K}$ terms. In fact, the terms $\ddot{\mathcal K}$ cancel in our computation so the actual value of $\ddot{\mathcal K}$ does not matter here.
We have $\dot{\mathcal K}=0$ so the scaling does not affect the first order derivatives.
\end{Rem}

Since $f(0,\lambda)$ does not depend on $\lambda$,
\begin{align*}
\frac{d^2}{dt^2}f\left(t,e^{i\theta(t)}\right)\mid_{t=0}
&=\frac{\partial^2 f}{\partial t^2}(0,i)-2\frac{\partial^2 f}{\partial \lambda\partial t}(0,i)\,\theta'\\
\frac{\partial^2 f}{\partial t^2}(0,i)
&=\frac{\ddot{x}'_2-i\ddot{x}'_3}{x'_2-i x'_3}
-\frac{(\dot{x}'_2-i \dot{x}'_3)^2}{(x'_2-i x'_3)^2}
-\frac{\ddot{x}'_2+i\ddot{x}'_3}{x'_2+i x'_3}
+\frac{(\dot{x}'_2+i \dot{x}'_3)^2}{(x'_2+i x'_3)^2}
\\
&=-2i\cos(\varphi)\ddot{x}'_2+2i\sin(\varphi)\ddot{x}'_3-2i\sin(2\varphi)(\dot{x}'_2)^2
+2i\sin(2\varphi)(\dot{x}'_3)^2-4i\cos(2\varphi)\dot{x}'_2\dot{x}'_3
\end{align*}
\begin{align*}
\frac{\partial^2 f}{\partial\lambda\partial t}(0,i)
&=\frac{\dot{x}''_2-i\dot{x}''_3}{x'_2-i x'_3}
-\frac{(\dot{x}'_2-i \dot{x}'_3)(x''_2-i x''_3)}{(x'_2-i x'_3)^2}
-\frac{\dot{x}''_2+i\dot{x}''_3}{x'_2+i x'_3}
+\frac{(\dot{x}'_2+i \dot{x}'_3)(x''_2+i x''_3)}{(x'_2+i x'_3)^2}
\\
&=-2i\cos(\varphi)\dot{x}''_2+2 i \sin(\varphi) \dot{x}''_3-2\cos(\varphi) \dot{x}'_2+2\sin(\varphi)\dot{x}'_3,
\end{align*}
where everything is evaluated at $\lambda=i$.
We find after simplification
\begin{align*}&-\cos(\varphi)\ddot{x}'_2+\sin(\varphi)\ddot{x}'_3=24\,\WWW\TTT\\
&-\cos(\varphi)\dot{x}''_2+\sin(\varphi)\dot{x}''_3=4\,\WWW\\
&-\sin(2\varphi)(\dot{x}'_2)^2+\sin(2\varphi)(\dot{x}'_3)^2-2\cos(2\varphi)\dot{x}'_2\dot{x}'_3=-16\,\WWW\TTT\\
&-\cos(\varphi)\dot{x}_2+\sin(\varphi)\dot{x}_3=4i\,\WWW.\end{align*}
Putting everything together we have
$$\frac{d^2}{dt^2}f\left(t,e^{i\theta(t)}\right)\mid_{t=0}=-48 i \, \WWW\TTT.$$
Since $g(0,\lambda)$ does not depend on $\lambda$,
$$\frac{\partial^2}{\partial t^2}\int_{\lambda_1(t)}^{\lambda_2(t)}g(t,\lambda)d\lambda\mid_{t=0}
=\int_{-i}^i \frac{\partial^2 g}{\partial t^2}(0,\lambda) d\lambda
+2\pd{g}{t}(0,i)\dot{\lambda}_2-2\pd{g}{t}(0,-i)\dot{\lambda}_1.$$
By Equation \eqref{eq:dgdt}, we have
$$2\pd{g}{t}(0,i)\dot{\lambda}_2-2\pd{g}{t}(0,-i)\dot{\lambda}_1
=16\WWW \dot{\theta}=32\WWW\TTT.$$
We compute the other term:
$$\frac{\partial^2 g}{\partial t^2}=\frac{1}{1-x_1}\left(
\ddot{x}_2 x'_3+2 \dot{x}_2\dot{x}_3'+x_2\ddot{x}'_3-\ddot{x}_3 x'_2-2\dot{x}_3\dot{x}'_2-x_3\ddot{x}'_3\right)
+\frac{2\,\dot{x}_1}{(1-x_1)^2}\left(
\dot{x}_2 x'_3+x_2 \dot{x}'_3-\dot{x}_3 x'_2 -x_3\dot{x}'_2\right).
$$
We find after simplification:
\begin{align*}&\ddot{x}_2 x'_3-\ddot{x_3} x'_2=-2\frac{(\lambda^2-1)(\lambda^2-3)}{\lambda}\WWW\TTT\\
&x_2\ddot{x}'_3-x_3\ddot{x}'_2=6\frac{(\lambda^4-1)}{\lambda}\WWW\TTT\\
&\dot{x}_2\dot{x}'_3-\dot{x}_3\dot{x}'_2=-8\lambda \WWW\TTT
\end{align*}
which gives
$$\ddot{x}_2 x'_3+2 \dot{x}_2\dot{x}_3'+x_2\ddot{x}'_3-\ddot{x}_3 x'_2-2\dot{x}_3\dot{x}'_2-x_3\ddot{x}'_3=4\frac{(\lambda^2+1)(\lambda^2-3)}{\lambda}\WWW\TTT.$$
Using Equation \eqref{eq:x2dotx3p}
$$\left(
\dot{x}_2 x'_3+x_2 \dot{x}'_3-\dot{x}_3 x'_2 -x_3\dot{x}'_2\right)\dot{x}_1
-i\frac{(\lambda^2+1)^3}{\lambda^2}\WWW\TTT.$$
Everything together gives
\begin{align*}\frac{\partial^2 g}{\partial t^2}&=-16(\lambda+i)\WWW\TTT\\
\int_{-i}^i \frac{\partial^2 g}{\partial t^2}d\lambda&=32\WWW\TTT.
\end{align*}
Finally, we obtain
$$\frac{d^2}{dt^2}\log\Hol(\mathcal D,\gamma)\mid_{t=0}
=4\, \WWW\TTT\left(-48\, i + 32\, i +32\, i\right)=64\, i\,\WWW\TTT.$$
Using that $\ddot{\theta}=0$, we obtain from \eqref{eq:volume}
$$\ddot{\mathcal V}=2\dot{\theta}\dot{\mathcal W}-\frac{\pi}{2i}\frac{d^2}{dt^2}\log\Hol(\gamma)=64\pi\WWW\TTT-32\pi\WWW\TTT=32\pi\WWW\TTT.$$
This concludes the proof of Theorem \ref{theorem:volume-order2}.

\subsection{A gauge theoretic computation of the holonomy $\Hol(\mathcal D,\gamma)$ in terms of ${\bf x}=(x_1,x_2,x_3)$}
In the last section, we present an alternate method for calculating the holonomy of $\mathcal D$ along the curve $\gamma$ in the case of Lawson-type CMC surfaces as in Section \ref{sec:LCMC} 
without using regularizations. Instead, we will use explicit Darboux coordinates of moduli spaces of flat connections on the 4-punctured sphere.

\subsubsection{The holomorphic symplectic form and the Atiyah-Bott form}
We use the notations from Section \ref{sec:LCMC} and fix $s=\tfrac{1}{2g+2}$, for $g\in\N_{>1}$. Consider the $\Z_{g+1}$-action on the moduli space $\mathcal M$ of flat totally reducible $\mathrm{SL}(2,\C)$-connections on $\Sigma_{g}$
\[\sigma\colon\mathcal M\to\mathcal M;\quad\sigma([\nabla])=[\sigma^*\nabla],\]
where
$\sigma\colon \Sigma_g\to\Sigma_g$ is the generator 
of the natural $\Z_{g+1}$-action on $\Sigma_{g}$, i.e.,
\[(y,z)\mapsto (e^{4 i \pi s}y,z).\] After removing reducible gauge classes, the fix point set of $\sigma$ consists of
different connected components, labeled by integers $l_j=0,1,\dots,g$ for $j=1,\dots,4.$ Each component $\mathcal C_{l_1,\dots,l_4}$ is isomorphic (after 
appending the reducible gauge classes) to the moduli space of totally reducible  logarithmic connections on the 4-punctured sphere with eigenvalues
$\pm\tfrac{l_j}{2g+2}$ of the residues at  the singular points $p_j.$ By definition, a totally reducible connection is either irreducible or the direct sum of irreducible connections.
As we are only interested in immersions $f$, we mainly restrict to the component $\mathcal C_1=\mathcal C_{1,1,1,1}$
(i.e., $l_1=\dots=l_4=1$), see \cite[Theorem 3.3]{HHSch},
and identify $\mathcal C_1$ (without further indication) with the corresponding moduli space of totally reducible logarithmic connections on $\CP^1$
with singular points $p_1,\dots,p_4$ such that the eigenvalues of the residues at each $p_j$ are $\pm s$.
 
 Recall that we only consider gauge classes of unitary connections, since $\nabla^\lambda$ is unitary along $\lambda\in\mathbb S^1.$ They are in one-to-one correspondence with stable parabolic structures. The case we will mostly interested in is $l=1$ and $g>1$, where the parabolic weight is $\tfrac{l}{2g+2}<\frac{1}{4}.$ Then by \cite[Lemma 10]{Fuchsian} unstable parabolic structure appears if and only if the logarithmic connection is non-Fuchsian, i.e., when the underlying holomorphic structure of the rank two bundle is non-trivial. (For details about parabolic structures, stability and the relationship with flat
unitary connections see \cite{Biswas} or \cite{HeHe,Fuchsian} and the references therein). This implies that the unitary
connections on the 4-punctured sphere we are interested in lie in the gauge class of Fuchsian systems.
%This holds in particular for $l=1$ since $g>1$ by assumption.
From $\mathcal C_1$ we therefore remove the non-Fuchsian locus to obtain
$\mathcal C_1^s.$
Consider the Atiyah-Bott  symplectic form $\Omega$  (see \cite{AB} or Goldman \cite{Gold}) restricted to the equivariant component
$\mathcal C_1^s$. It is well-known (see e.g. \cite{RSW} or \eqref{cur-CS} above) that  $\Omega$ is the curvature
of $\mathcal D$ on $\mathcal L\to\mathcal M.$

\begin{Lem}\label{lem:4fQC1}
There is a 4-fold covering 
\[\mathcal Q:=\{(x_1,x_2,x_3)\in\C^3\mid x_1^2+x_2^2+x_3^2=1\}\to \mathcal C_1^s\]
branched over the 3 reducible gauge classes. %The pull-back of the natural holomorphic symplectic form $\Omega$ is given by
%\[\Omega=\todo{2}i\frac{dx_2\wedge dx_3}{x_1}.\]
\end{Lem}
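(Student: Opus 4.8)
The plan is to realise the asserted map as the quotient of $\mathcal Q$ by an explicit $\Z_2\times\Z_2$–action by gauge transformations, and then to read off both the degree and the branch locus from the fixed‑point structure of that action. First I would define
$$\Psi\colon\mathcal Q\longrightarrow\mathcal C_1^s,\qquad \mathbf x=(x_1,x_2,x_3)\longmapsto [d+\eta],$$
where $\eta$ is the Fuchsian potential \eqref{eq:eta}--\eqref{eq:etaAk} built from $\mathbf x$. Since $\det A_k=-(x_1^2+x_2^2+x_3^2)=-1$, the residues $sA_k$ have eigenvalues $\pm s$, and $\sum_k A_k=0$ makes $d+\eta$ regular at $\infty$; as $\eta$ is genuinely meromorphic the underlying bundle is trivial, so $\Psi$ lands in the Fuchsian locus $\mathcal C_1^s$, a smooth complex surface of the same dimension as $\mathcal Q$.

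Next I would identify the deck group. The matrices $C_2,C_3,C_4$ of \eqref{eq:Cs} generate, together with $-\Id$, the quaternion group, and they satisfy $A_k=C_kA_1C_k^{-1}$. Because each $C_j$ normalises this group, conjugating $d+\eta$ by the constant gauge $C_j$ preserves the symmetric ansatz and merely replaces $\mathbf x$ by $\gamma_j(\mathbf x)$, where in coordinates
\[\gamma_2(\mathbf x)=(-x_1,-x_2,x_3),\qquad \gamma_3(\mathbf x)=(x_1,-x_2,-x_3),\qquad \gamma_4(\mathbf x)=(-x_1,x_2,-x_3).\]
These are exactly the sign changes with an even number of minus signs, so $\Gamma:=\{\mathrm{id},\gamma_2,\gamma_3,\gamma_4\}\cong\Z_2\times\Z_2$ acts on $\mathcal Q$ with $\Psi\circ\gamma_j=\Psi$.

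The crux is to show the fibres of $\Psi$ are precisely the $\Gamma$–orbits; I expect this to be the main obstacle. Here I would use the identification $\sl(2,\C)\cong\C^3$ under which $\det=-(x_1^2+x_2^2+x_3^2)$ and $\Ad\colon\SL(2,\C)\to\SO(3,\C)$, recording first the gauge invariants $\tr(A_1A_2)=4x_3^2-2$, $\tr(A_1A_3)=4x_1^2-2$, $\tr(A_1A_4)=4x_2^2-2$. If $\Psi(\mathbf x)=\Psi(\mathbf x')$ then, the bundle being trivial, the gauge is a constant $g\in\SL(2,\C)$ with $g^{-1}A_kg=A_k'$; the invariants force $x_i'^2=x_i^2$, so $\mathbf x'=\mathrm{diag}(\epsilon)\,\mathbf x$ with $\epsilon_i\in\{\pm1\}$. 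A short computation gives $\det\,[A_1\,|\,A_2\,|\,A_3]=4x_1x_2x_3$, so wherever $x_1x_2x_3\neq0$ the vectors $A_1,A_2,A_3$ span $\C^3$ and the induced $\Ad_g$ must equal $\mathrm{diag}(\epsilon)$; belonging to $\SO(3,\C)$ then forces $\epsilon_1\epsilon_2\epsilon_3=1$, i.e. $\mathbf x'\in\Gamma\cdot\mathbf x$. Thus $\Psi$ is generically $4$–to–$1$ with deck group $\Gamma$, and the induced $\overline\Psi\colon\mathcal Q/\Gamma\to\mathcal C_1^s$ is generically injective; surjectivity follows since $\mathcal C_1^s$ is irreducible and $\Psi$ has $2$–dimensional image, hence is dominant and proper onto it (see \cite{Fuchsian}). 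The remaining technical point is the codimension–one locus $x_1x_2x_3=0$, where irreducibility still holds but the spanning argument degenerates; I would dispatch it by normality together with continuity of the degree, so that the covering extends holomorphically across it.

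Finally I would determine the branch locus. The fixed points of $\Gamma$ on $\mathcal Q$ are the six axis points $(\pm1,0,0),(0,\pm1,0),(0,0,\pm1)$, at which $A_1$—and hence every $A_k$—is diagonal, so the connection is reducible (consistent with Proposition \ref{prop:diagonal}). They form three $\Gamma$–orbits of size two, each with stabiliser of order two; these correspond to the three distinct ways, up to the Weyl interchange $L\leftrightarrow L^{-1}$, of distributing the residue eigenvalues $\pm s$ among $p_1,\dots,p_4$ subject to $\sum_k(\text{residue})=0$, i.e. exactly the three reducible gauge classes. Near such a fixed point the stabiliser acts as $-\Id$ on the tangent plane, so $\overline\Psi$ has there two preimages of ramification index two, giving local degree $2\cdot 2=4$. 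Hence $\Psi$ is a $4$–fold covering branched exactly over the three reducible gauge classes, as claimed.
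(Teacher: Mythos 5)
Your proposal is essentially correct and takes a genuinely different route from the paper: the paper's ``proof'' of this lemma is a one-line citation of \cite[Lemma 16]{Fuchsian}, merely recording that the map is $\mathbf x\mapsto[d+\eta_{\mathbf x}]$ and that the four preimages are related by conjugation with $C_2,C_3,C_4$ from \eqref{eq:Cs}. You instead give a self-contained argument, and its computational core checks out: the deck group $\Gamma\cong\Z_2\times\Z_2$ of even sign changes is indeed realized by the constant gauges $C_2,C_3,C_4$; the invariants are correctly $\tr(A_1A_2)=4x_3^2-2$, $\tr(A_1A_3)=4x_1^2-2$, $\tr(A_1A_4)=4x_2^2-2$ on the quadric; $\det[A_1|A_2|A_3]=4x_1x_2x_3$ in the $\C^3$ picture; and the fixed-point analysis (six axis points in three orbits of size two, stabilizer acting as $-\Id$ on the tangent plane, local degree $2\cdot 2=4$, images the three decomposable classes counted by distributing the residue eigenvalues $\pm s$ with $\sum=0$ modulo $L\leftrightarrow L^{-1}$) is exactly right and matches Proposition \ref{prop:diagonal}. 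What your route buys is transparency about \emph{why} the degree is $4$ and \emph{why} the branch locus is the reducible locus; what it costs is that the two genuinely global inputs are still only gestured at. First, surjectivity: that every class in $\mathcal C_1^s$ admits a representative in the symmetric normal form \eqref{eq:etaAk} is precisely the content of the cited \cite[Lemma 16]{Fuchsian}, and your ``dominant and proper'' sketch should be fleshed out (dominance from generic finiteness of fibres plus equality of dimensions; properness because $\tr(B_1B_j)$ are continuous on $\mathcal C_1^s$ and bound $x_i^2$, so preimages of compacta are compact; then the image is closed and dense, hence everything). Second, the locus where exactly one $x_i$ vanishes: there the spanning argument fails but the $\Gamma$-orbits still have four elements, so finiteness of $\Psi$ of total degree $4$ forces these orbits to be full fibres with no ramification; stating it this way closes the gap more cleanly than an appeal to ``continuity of the degree.'' You should also make explicit that a gauge equivalence in $\mathcal C_1^s$ is an isomorphism of logarithmic connections on the trivial bundle over $\CP^1$, hence a constant matrix --- this is what licenses reading off $x_i'^2=x_i^2$ from the trace invariants. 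With those three points spelled out, your argument is a complete replacement for the external citation.
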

\begin{proof}
The map is  given by
\[(x_1,x_2,x_3)\mapsto [d+\eta_{\bf x}]\]
where $\eta_{\bf x}=\eta$ is given as in \eqref{eq:eta} with $A_k$ as in \eqref{eq:etaAk}  rescaled according to Remark \ref{etarescaled}  with $s=\tfrac{1}{2g+2}$. That this is a 4-fold covering branched only over the 3 reducible connections 
is shown in \cite[Lemma 16]{Fuchsian}. The (generically)
4 preimages are related to each other through
 conjugation with $C_2,C_3,C_4$ in \eqref{eq:Cs}.
\end{proof}

Let the (immersed) CMC surface $f\colon\Sigma=\Sigma_g\to\mathbb S^3$ be given in terms of the Fuchsian potential
$d+\eta$ as in Section \ref{sec:dpwbasic}.
This means that  the pull-back of $d+\eta$ via the $(g+1)$-fold totally branched covering $\Sigma_g\to\CP^1$
is gauge equivalent (by a holomorphic family of gauge transformations $h=h(\lambda)$ which extends holomorphically to
$\lambda=0$) to the associated family $\nabla^\lambda$ of flat connections of $f$.
(Note that the gauge transformations $h$ are only well-defined up to $\pm1$, which can be resolved easily by either
tensoring with a flat $\Z_2$-bundle or by working on a 2-fold branched covering and can therefore be ignored in the following.)
In particular, this implies that the connections $\nabla^\lambda$ are contained in the equivariant component $\mathcal C_1$. The corresponding curve
\[\gamma\colon [0,c]\to \mathcal C^s_1\subset \mathcal M\] in \eqref{def:gamma} is by construction closed, but 
 its lift to
the quadric
$\mathcal Q$ via Lemma \ref{lem:4fQC1} is not closed
by Proposition \ref{prop:diagonal}.

It is therefore more appropriate to use other coordinates $(u,r)$ on some open and dense subset $\mathcal U$ of $\mathcal C_1^s$  which can then be expressed in terms of the parameter vector $\bf x$.  Such coordinates exists by \cite{LorS}. We use the same normalization as
in \cite{Fuchsian}, see also
 \cite{HeHe}. Consider
 
 \begin{equation}\label{eq:tildeAs}
 \begin{split}
 \wt A_1&:=\begin{pmatrix}-1&2u\\0&1 \end{pmatrix}+r\begin{pmatrix}-u&u^2\\-1&u \end{pmatrix}\;,\quad
 \wt A_2:=\begin{pmatrix}-1&0\\-2&1 \end{pmatrix}+r\begin{pmatrix}0&0\\1-u&0 \end{pmatrix}\;,\\
 \wt A_3&:=\begin{pmatrix}1&0\\2&-1 \end{pmatrix}+r\begin{pmatrix}u&-u\\u&-u \end{pmatrix}\;,\quad
 \wt A_4:=\begin{pmatrix}1&-2u\\0&-1 \end{pmatrix}+r\begin{pmatrix}0&u-u^2\\0&0 \end{pmatrix}\\
 \end{split}
\end{equation}
and
\begin{equation}\label{eq:tildeeta}\wt \eta(u,r)=s\sum_{k=1}^4\wt A_k\frac{dz}{z-p_k}.\end{equation}

If
the constraint
\[x_1^2+x_2^2+x_3^2=1\] is satisfied,
then there exist a  matrix \begin{equation*}
\begin{split}
&K=K(x_1,x_2,x_3)=\\
&\sqrt{\frac{\left(x_1^2+x_3^2\right) (x_2+i x_3)}{2 (x_1-1) (x_1-ix_2 x_3)}}
\left(
\begin{array}{cc}
 \tfrac{1- x_1}{x_2+i x_3} & \frac{(1-x_1) (x_1-i x_2 x_3)}{\left(x_1^2+x_3^2\right) (x_2+i x_3)} \\
 1 & \frac{i x_2 x_3-x_1}{x_1^2+x_3^2} \\
\end{array}
\right)\in\mathrm{SL}(2,\C)\end{split}\end{equation*}

 with
\[(d+\eta_{(x_1,x_2,x_3)}).K=d+\wt\eta(u,r)\]
where
\begin{equation}\label{eq:usx123}
\begin{split}
u&=-\frac{(x_2+ i x_1 x_3)^2}{(i x_1 x_2-x_3)^2}\\
r&=\frac{2 x_2 (-i x_1 x_2+x_3)^2}{\left(x_2^2+x_3^2\right) (x_2+i x_1 x_3)}.\\
\end{split}
\end{equation}

It is well-known that the symplectic form $\Omega$ can be computed in terms of the residues, if the gauge classes
are represented by logarithmic connections, see for example \cite{AlMa} or \cite{Audin}. We
then have the following lemma.
\begin{Lem}\label{Lem:omus}
There is an open dense subset $\mathcal U\subset \mathcal C_1^s$
such that $(u,r)$ are coordinates on $\mathcal U.$ Moreover,  
\begin{equation}\label{eq:Omegadsdu}\Omega=\tfrac{1}{2}dr\wedge du=\tfrac{1}{2} d(rdu).\end{equation}
\end{Lem}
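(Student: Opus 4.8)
The plan is to establish the two assertions in turn, first that $(u,r)$ constitute a coordinate system and then that $\Omega$ assumes the stated canonical shape. For the coordinate claim I would read off from the normal form \eqref{eq:tildeAs}--\eqref{eq:tildeeta} that the assignment $(u,r)\mapsto[\wt\eta(u,r)]$ is a holomorphic map into $\mathcal C_1^s$, and then invoke \cite{LorS} (in the normalization fixed in \cite{Fuchsian}, see also \cite{HeHe}): the pair $(u,r)$ --- the position of the apparent singularity of the associated scalar second-order equation together with its dual parameter --- restricts to a biholomorphism from an open dense subset of $\C^2$ onto an open dense $\mathcal U\subset\mathcal C_1^s$. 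Concretely, the gauge $K$ of \eqref{eq:usx123} conjugates $d+\eta_{\bf x}$ to $d+\wt\eta(u,r)$ wherever its entries are finite and $\det K\neq 0$; since $\mathcal C_1^s$ has complex dimension $2(3\cdot 0-3+4)=2$ and the rational expressions \eqref{eq:usx123} for $(u,r)$ in terms of $\bf x$ are generically submersive, the exceptional set --- where $K$ degenerates, together with the reducible and non-Fuchsian loci already removed --- is a proper analytic subset, and on its complement $(u,r)$ are coordinates.

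For the symplectic form I would use that on the space of logarithmic connections the Atiyah--Bott form \eqref{cur-CS} localizes to the punctures and equals the residue pairing of \cite{AlMa,Audin}; equivalently, $\mathcal C_1^s$ is the symplectic reduction of $\prod_{k=1}^4\mathcal O_k$, the coadjoint orbits through the rescaled residues $s\wt A_k$, by the diagonal $\mathrm{SL}(2,\C)$-action with moment map $\sum_k s\wt A_k$, carrying the sum $\sum_k\omega_k^{\mathrm{KKS}}$ of Kirillov--Kostant--Souriau forms. Since the Loray--Saito normal form is a gauge slice and the residues stay in fixed conjugacy classes, the variations $\partial_u\wt\eta$ and $\partial_r\wt\eta$ are horizontal representatives of $\partial_u,\partial_r\in T\mathcal U$, so $\Omega(\partial_u,\partial_r)$ is computed directly from the explicit $2\times 2$ matrices: writing $\partial_r\wt A_k=[v_k,\wt A_k]$, the pairing reduces to $\propto\sum_k\tr(\partial_u\wt A_k\, v_k)$. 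Carrying this out with \eqref{eq:tildeAs} yields the constant $\tfrac12$, that is $\Omega=\tfrac12\,dr\wedge du$. The identity $\tfrac12 dr\wedge du=\tfrac12 d(r\,du)$ is then immediate from $d(r\,du)=dr\wedge du$, which completes \eqref{eq:Omegadsdu}.

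The main obstacle is pinning down the overall normalization. One must reconcile three sources of constants --- the factor $\tfrac{i}{2\pi}$ in \eqref{cur-CS}, the residue and orientation factors incurred in passing from $\tfrac{i}{2\pi}\int_\Sigma\tr(\,\cdot\wedge\cdot)$ to a sum of local residues, and the scaling by $s$ of the residues introduced in Remark \ref{etarescaled} --- and verify that they combine to exactly $\tfrac12$ with no residual dependence on $s$. This $s$-independence is the characteristic feature of apparent-singularity Darboux coordinates, where the symplectic form is canonical regardless of the residue eigenvalues, so the expected cancellation is structural; the remaining work is the bookkeeping together with the short explicit matrix computation confirming both the coefficient $\tfrac12$ and that $\partial_u\wt\eta,\partial_r\wt\eta$ genuinely descend, leaving no surviving gauge component in the slice.
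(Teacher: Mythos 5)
Your proposal is correct in outline and follows precisely the ``more systematic'' route that the paper's proof explicitly sets aside (``A more systematic proof can be given following \cite{Audin}''): you invoke the localization of the Atiyah--Bott form to a residue pairing at the punctures (equivalently, the coadjoint-orbit/moment-map description of the moduli space of logarithmic connections), whereas the paper gives a hands-on computation, building smooth representatives of $\partial_u$ and $\partial_r$ on the covering surface $\Sigma_g$ with bump functions $\rho_l$ and evaluating $\frac{i}{2\pi}\int_{\Sigma_g}\tr(X\wedge Y)$ directly. Both routes terminate in the same explicit matrix computation $\sum_l\tr\bigl(U_l[\wt A_l,S_l]\bigr)$ with the matrices of \eqref{eq:tildeAs}, and for the coordinate claim your appeal to \cite{LorS} is a legitimate substitute for the paper's argument, which instead deduces it from the non-degeneracy of $\Omega(\partial_u,\partial_r)$ just computed. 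The one place where your reasoning needs repair is the normalization, which you rightly flag as the main obstacle but then attribute to the wrong mechanism: the form $\Omega$ in the lemma is \emph{not} the intrinsic symplectic form of the moduli space of logarithmic connections on the $4$-punctured sphere (for which the canonicity of apparent-singularity Darboux coordinates would indeed make the constant independent of the residue eigenvalues); it is the Atiyah--Bott form of $\mathcal M(\Sigma_g)$ restricted to the $\Z_{g+1}$-equivariant component $\mathcal C_1^s$. The constant $\tfrac12$ arises from the cancellation of the covering degree $g+1$ against the residue scaling $s=\tfrac{1}{2g+2}$ (the pulled-back residues on $\Sigma_g$ are $\tfrac12\wt A_l$), and this is exactly what the paper's bump-function computation on $\Sigma_g$ is designed to track. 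If you run the coadjoint-orbit argument purely downstairs on $\CP^1$ with residues $s\wt A_l$ you get an answer off by the factor $g+1$, so the covering must enter your bookkeeping explicitly; once it does, your computation closes and agrees with the paper's.
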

%For the convenience of the reader, we give the proof of \eqref{eq:Omegadsdu}:
\begin{proof}
We give a direct proof adapted to the situation at hand. A more systematic proof can be given following \cite{Audin},
see for example \cite[Section 3.4]{HHTSD}.
By construction, $(u,r)$ are well-defined and injective on some open and dense subset of the Fuchsian locus $\mathcal C_1^s$, which itself is an open dense subset of $\mathcal C_1$. That $(u,r)$ are actually coordinates follows from our derivation of \eqref{eq:Omegadsdu} below.

Let $d+\wt\eta(u_0,r_0)$ be given.
Consider the branched  $(g+1)$-fold covering $\pi\colon\Sigma\to\CP^1$, and the pull-back $\wt\nabla=\pi^*(d+\wt\eta(u_0,r_0))$.
Note that the eigenvalues of the residues of $\wt\nabla$ are $\pm\tfrac{1}{2}.$ In order to gauge the logarithmic singularities away, we therefore need the square root of a holomorphic coordinate centered at the singular points.
Globally, we
have a `double-valued' gauge transformation 
\[h=h_0\in\Gamma(\Sigma\setminus\pi^{-1}(p_1,\dots,p_4), \mathrm{SL}(2,\C)\otimes L)\]
where $L\to \Sigma\setminus\pi^{-1}(p_1,\dots,p_4)$ is a  line bundle  with fixed trivialization
$L^2\equiv\underline\C$ such that 
\[\nabla:=\wt\nabla.h\] is smooth globally on $\Sigma.$ (Likewise, one could perform all computations on a 
2-fold covering $\Sigma\to\Sigma_g$ on which $h$ is simple-valued. In fact, the $\Z_2$-valued monodromy of $L$
is then just the covering monodromy of $\Sigma\to\Sigma_g$.)

Let 
\[U:=\frac{\partial\wt \eta(u,r)}{\partial u}_{\mid (u_0,r_0)},\,S:=\frac{\partial\wt \eta(u,r)}{\partial r}_{\mid (u_0,r_0)}\in H^0(\CP^1\setminus\{p_1,\dots,p_4\},K\otimes\mathfrak{sl}(2,\C))\] be two meromorphic 1-forms with first order 
poles which are tangent to $\mathcal M$ at $d+\wt\eta(u_0,r_0).$
By assumption, the conjugacy classes of the residues of $\wt \eta(u,r)$ are independent of $u$ and $r$. Therefore  we can express
\[U=\sum_{l=1}^4 [U_l,\Res_{p_l}\wt\eta(u_0,r_0)]\frac{dz}{z-p_l}\quad\text{and}\quad
S=\sum_{l=1}^4 [S_l,\Res_{p_l}\wt \eta(u_0,r_0)]\frac{dz}{z-p_l}\]
for $U_l,S_l\in\mathfrak{sl}(2,\C)$, $l=1,\dots,4.$
Of course, $U_l$ and $S_l$ are only determined up to addition 
with multiples of Res$_{p_l}\eta(u_0,r_0).$

Let $\rho_l\colon\Sigma_g\to\R$, $l=1,\dots,4$, be smooth functions with small support centered at $\pi^{-1}(p_l)$ and
constant to $1$ in a small neighborhood of $\pi^{-1}(p_l)$.
Varying $u$ and $r$ respectively, we then find two 1-parameter families of (double-valued) gauge transformations
of the form
\[h_u=(\Id+\sum_l\rho_lU_l (u-u_0)+o(u-u_o)\,)g\]
and
\[\wt h_r=(\Id+\sum_l\rho_lS_l (r-r_0)+o(r-r_o)\,)g\]
such that
\[\pi^*(d+\wt\eta(u,r_0)).h_u\quad\text{and}\quad \pi^*(d+\wt\eta(u_0,r)).\wt h_r\]

are globally smooth on $\Sigma$ for $u\sim u_0$ respectively $r\sim r_0.$ 
Define
\[A=h^{-1}(\pi^*\sum_l\rho_lU_l )\, h\quad\text{and}\quad B=h^{-1}(\pi^*\sum_l\rho_lS_l)h.\]
Note that  $\Res_{p_l}\wt\eta(u_0,r_0)=\tfrac{1}{2}\wt A_l.$
Using $s=\tfrac{1}{2g+2}$ and expanding in $u$ or $r$, respectively,
gives the smooth  tangent vectors $X,Y\in T_\nabla\mathcal C_1\subset T_\nabla\mathcal M$
%\todo{I'm not sure the notation $k=g+1$ has been introduced. Since it is only used here I would keep $2g+2$}
\begin{equation*}
\begin{split}X:=&\tfrac{\partial}{\partial u}=d^\nabla A +h^{-1}\pi^*Uh=h^{-1}(\sum_{l}(d+\sum_n\tfrac{1}{2k}\wt A_n\pi^*\tfrac{dz}{z-p_n})(\rho_lU_l)+[U_l,\tfrac{1}{2k}\wt A_l]\pi^*\tfrac{dz}{z-p_l})h\\
&=h^{-1}(\sum_l d\rho_l U_l+[U_l,\tfrac{1}{2k}\wt A_l]\pi^*\tfrac{dz}{z-p_l}+\sum_n [\tfrac{1}{2k}\wt A_n,U_l]\rho_l \pi^*\tfrac{dz}{z-p_n})h
\end{split}
\end{equation*}
and 
\begin{equation*}
\begin{split}Y:=\tfrac{\partial}{\partial r}=d^\nabla B+h^{-1}\pi^*Sh
&=h^{-1}(\sum_j d\rho_j S_j+[S_j,\tfrac{1}{2k}\wt A_j]\pi^*\tfrac{dz}{z-p_j}+\sum_m \rho_j[\tfrac{1}{2k}\wt A_m,S_j] \pi^*\tfrac{dz}{z-p_m})h.
\end{split}
\end{equation*}
Recall that on the closed Riemann surface $\Sigma_g$, for a flat connection $\nabla$ and for tangent vectors represented by $X,Y\in\ker(d^\nabla)$ 
we have 
\[\Omega(X,Y)=\frac{i}{2\pi}\int_\Sigma\tr(X\wedge Y)\]
independently of the (smooth) representation.
Furthermore, we can assume without loss of generality  that  
$\pi^{-1}(\infty)\cap\mathrm{supp}(\rho_l)=\emptyset=\mathrm{supp}(\rho_l)\cap\mathrm{supp}(\rho_j)$ for all
$j\neq l=1,\dots,4.$ In particular, this gives 
\[\int_\Sigma d\rho_l\wedge \rho_l\pi^*\tfrac{dz}{z-p_j}=\int_\Sigma d\rho_l\wedge\pi^*\tfrac{dz}{z-p_j}=0=\int_\Sigma d\rho_l\wedge \rho_j\pi^*\tfrac{dz}{z-p_m}\]
for all $l\neq j$. Moreover, we have for all $l=1,\dots,4$
\[\int_\Sigma d\rho_l\wedge (1-\rho_l)\pi^*\tfrac{dz}{z-p_l}=\int_\Sigma (d\rho_l- \tfrac{1}{2}d\rho_l^2)\wedge \pi^*\tfrac{dz}{z-p_l}=-\pi i (g+1)\]
(where the factor $g+1$ comes from the covering).
Overall integration then yields (using the invariance of the trace under conjugation)
\begin{equation}
\begin{split}
\Omega(X,Y)&=\frac{i}{2\pi}\int_\Sigma\tr(X\wedge Y)
=\frac{i}{2\pi}\int_\Sigma\sum_{l}\tr(U_l[S_l,\tfrac{1}{2g+2}\wt A_l]-S_l[U_l,\tfrac{1}{2g+2}\wt A_l])d\rho_l\wedge (1-\rho_l)\pi^*\tfrac{dz}{z-p_l}\\
&=-(g+1)\sum_l\tr(U_l[\tfrac{1}{2g+2}\wt A_l,S_l])=-\tfrac{1}{2}\sum_l\tr(U_l[\wt A_l,S_l]).
\end{split}
\end{equation}
From \eqref{eq:tildeAs} we therefore compute
\[\Omega(X,Y)=-\tfrac{1}{2}(1+0+0+0)=-\tfrac{1}{2}.\]
In particular, $X,Y$ are pointwise linearly independent as tangent vectors and give (local) coordinates $(u,r)$ on $\mathcal U\subset\mathcal C_1^s$ via \eqref{eq:tildeeta}.
Finally, since $X=\tfrac{\partial}{\partial u}$ and $Y=\tfrac{\partial}{\partial r}$, we have $\Omega=\tfrac{1}{2}dr\wedge du.$
\end{proof}
Note that the $(u,r)$-coordinates cover an open dense subset \[\mathcal U:=\{(u,r)\mid u\in\C\setminus\{0,1\},r\in \C\}\subset \mathcal C_1^s,\] but become singular exactly over 
the totally reducible (and unitary) connections.
%For details about parabolic structures, see for example \cite{Biswas} or \cite{Fuchsian} and the references therein. For example, 
%a Fuchsian system has  strictly-semi stable parabolic structure and unitary monodromy if and only if the monodromy is reducible and unitary.
This clearly happens at the Sym points $\lambda_1$ and $\lambda_2$, so that we have to study the behavior at those points.

\begin{Pro}\label{Pro7}
If $\gamma\colon [0,c]\to\mathcal C_1^s$ is given by
\[(x_1,x_2,x_3)\colon[0,c]\to \mathcal Q,\]
then $-\tfrac{1}{2} rdu$ pulls back to a smooth 1-form on $[0,c]$ 
and we have
\begin{equation}\label{eq:hol-intsdu}
\Hol(\mathcal D,\gamma)=\exp{(-\tfrac{1}{2} \int_\gamma rdu)}.
\end{equation}
\end{Pro}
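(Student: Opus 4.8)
The plan is to combine the fact that the Chern--Simons connection $\mathcal D$ has curvature $\Omega$ (equation \eqref{cur-CS}, see \cite{RSW}) with the explicit primitive of $\Omega$ supplied by Lemma \ref{Lem:omus}. On the open dense chart $\mathcal U\subset\mathcal C_1^s$ we have $\Omega=\tfrac12 d(r\,du)$, so $\tfrac12 r\,du$ is a local connection $1$-form for $\mathcal D$: in a trivialization of $\mathcal L$ over $\mathcal U$ realizing this potential one has $\mathcal D=d+\tfrac12 r\,du$, and parallel transport along a curve multiplies the fibre coordinate by $\exp(-\tfrac12\int r\,du)$. For the closed loop $\gamma$ this formally yields $\Hol(\mathcal D,\gamma)=\exp(-\tfrac12\oint_\gamma r\,du)$, exactly as in Theorem \ref{thm:holev}, but now with the symplectic potential $\tfrac12 r\,du$ in place of $\hat\alpha$. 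Two points genuinely require justification: first, that $r\,du$ makes sense along $\gamma$, whose endpoints sit at the reducible locus where the coordinates $(u,r)$ degenerate; and second, that this reducible basepoint contributes no extra monodromy (i.e.\ no analogue of the transition factor $\Theta(d,f)$).

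For the first point I would pull $r\,du$ back to the parameter interval $[0,c]$ through the explicit formulas \eqref{eq:usx123} for $u$ and $r$ in terms of $\mathbf x=(x_1,x_2,x_3)$ and check smoothness directly. By Proposition \ref{prop:diagonal} the lifted path has endpoints $\mathbf x=(\mp1,0,0)$, where all the relevant numerators and denominators in \eqref{eq:usx123} vanish, so the apparent singularities are of $0/0$ type. Near the endpoint $(1,0,0)$ write $\epsilon=\lambda-\lambda_2$; the constraint $x_1^2+x_2^2+x_3^2=1$ forces $x_1'(\lambda_2)=0$ and $x_1=1-\tfrac12(x_2^2+x_3^2)+O(\epsilon^3)$, while $x_2,x_3$ vanish to first order in $\epsilon$. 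Substituting these expansions, the factors $x_2+i x_1 x_3$ and $i x_1 x_2-x_3$ each vanish to first order with nonzero leading coefficient (this is precisely the nondegeneracy $x_2'(\lambda_2)+i x_3'(\lambda_2)\neq0$, the same condition appearing in the denominators of Theorem \ref{The:LeV} and guaranteed by \eqref{eq:central} for small $t$), so $u$ is smooth with $u(\lambda_2)=1$ and finite derivative, and $r$ tends to a finite limit. Hence $r\,du$ extends to a smooth $1$-form on $[0,c]$; the identical computation applies at $\lambda_1$.

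For the second point I would invoke Lemma \ref{Lem:non-singular}: the bundle $\mathcal L$ and the connection $1$-forms of $\mathcal D$ extend continuously across reducible points, so these points carry no additional holonomy, giving $\Hol(\mathcal D,\gamma)=\exp(-\int_D\Omega)$ for any disc $D$ with $\partial D=\gamma$ whose interior lies in $\mathcal U$. Applying $\Omega=\tfrac12 d(r\,du)$ and Stokes on $D$ (excising a small neighborhood of the reducible basepoint and then letting it shrink, which is legitimate since $r\,du$ is bounded by the previous paragraph) produces $\exp\!\big(-\tfrac12\oint_{\partial D} r\,du\big)=\exp\!\big(-\tfrac12\int_\gamma r\,du\big)$, which is \eqref{eq:hol-intsdu}.

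The main obstacle is the first point: the coordinates $(u,r)$ are singular exactly over the reducible connections occurring at the Sym points, so the whole argument hinges on showing that the $0/0$ indeterminacies in \eqref{eq:usx123} resolve to a genuinely smooth and bounded $1$-form. This is what makes the symplectic primitive $\tfrac12 r\,du$ usable up to the endpoints of $\gamma$ and legitimizes the application of Stokes against the singular basepoint.
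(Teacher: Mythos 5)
Your proposal correctly identifies the two issues that need attention (smoothness of $r\,du$ at the reducible endpoints, and the absence of extra monodromy there), and your endpoint expansion is in the same spirit as the paper's explicit formula \eqref{consdu}, which exhibits $\tfrac12 r\,du=\frac{2x_2(x_1-ix_2x_3)}{x_1(x_2^2-1)}dx_2+\frac{2ix_2}{x_1}dx_3$ as manifestly smooth near $(x_1,x_2,x_3)=\pm(1,0,0)$. However, the way you pass from the curvature identity to the holonomy formula has a genuine gap, in fact two.

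First, the step ``$\Omega=\tfrac12 d(r\,du)$, hence $\tfrac12 r\,du$ is a connection $1$-form for $\mathcal D$ in some trivialization over $\mathcal U$'' is not valid. Lemma \ref{Lem:omus} only says that $\tfrac12 r\,du$ is \emph{a} primitive of the curvature; the actual connection form of $\mathcal D$ in a given trivialization is another primitive, and two primitives differ by a closed $1$-form. Since $\mathcal U\cong(\C\setminus\{0,1\})\times\C$ is not simply connected, that closed form can have arbitrary periods, and only discrepancies with periods in $2\pi i\Z$ can be absorbed into a change of trivialization of $\mathcal L$. This is precisely the content the paper supplies and you omit: it produces a second primitive $-\frac{x}{y}dy$ attached to an explicit normalization $(x,y)=(x_2,\frac{x_1-ix_3}{x_1+ix_3})$ that is well behaved at the reducible points, and verifies that $\tfrac12 r\,du-(-\frac{x}{y}dy)=d\log(x_2^2-1)$, a form that is smooth near $\pm(1,0,0)$ and whose periods lie in $2\pi i\Z$, so the two $1$-forms induce the same parallel transport. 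Without an argument of this kind, citing Lemma \ref{Lem:non-singular} does not help: that lemma concerns the connection $\mathcal D$ itself, not whether the particular $1$-form $\tfrac12 r\,du$ represents $\mathcal D$ in a trivialization continuous across the reducible locus.

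Second, your fallback via Stokes, $\Hol(\mathcal D,\gamma)=\exp(-\int_D\Omega)=\exp(-\tfrac12\oint_{\partial D}r\,du)$, requires a disc $D$ with $\partial D=\gamma$ whose interior lies in $\mathcal U$, and such a disc does not exist in general. For the Lawson family at $t=0$ with $\varphi=\pi/4$ one computes from \eqref{eq:usx123} and \eqref{eq:central} that $u(e^{i\tau})=-e^{4i\arctan(\sin\tau)}$, which traverses the unit circle in the $u$-plane exactly once as $\tau$ runs from $-\pi/2$ to $\pi/2$; thus the $u$-component of $\gamma$ has winding number $\pm 1$ around $u=0$ and $\gamma$ is not null-homotopic in $\mathcal U$. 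Any bounding disc must therefore cross the locus $u\in\{0,\infty\}$, where $r\,du$ is singular and where (as the remark following the paper's proof indicates, in connection with Witten's volume formula and the non-triviality of $\mathcal L$) $\Omega$ carries nontrivial contributions. Your excision argument only controls the endpoints of $\gamma$ at $u=1$ and does not address these interior crossings, so the Stokes computation cannot be completed along these lines.
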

\begin{proof}
The parameter $u$ encodes the parabolic structure, which is closely related to
unitary connections by the Mehta-Seshadri theorem. 
For details about parabolic structures, see for example \cite{Biswas} or \cite{Fuchsian} and the references therein. In particular, 
any Fuchsian system with unitary monodromy has a strictly semi-stable parabolic structure if and only if the monodromy is reducible. Strictly semi stable parabolic structure happens exactly when $u$ becomes $0,1$ or $\infty$, see for example \cite[Section 2.4]{Fuchsian} or \eqref{eq:semistable} below. Recall that we only work on the unit circle, and all connections are unitary (up to conjugation).
%The values $u=0,1,\infty$ correspond exactly to strictly semi-stable parabolic structures. 
Hence, 
 we obtain (keeping in mind the unitarity) 
\begin{equation}\label{eq:semistable}
\begin{split}
u=0 \quad &\Longleftrightarrow \quad x_1=x_2=0\quad\text{and}\quad x_3=\pm1\\
u=1 \quad &\Longleftrightarrow \quad x_2=x_3=0\quad\text{and}\quad x_1=\pm 1\\
u=\infty \quad &\Longleftrightarrow \quad x_1=x_3=0\quad\text{and}\quad x_2=\pm1\;.\\
\end{split}
\end{equation}

Note that for  the Lawson surfaces and their CMC deformations, we have for large genus $g$ corresponding to $s=\tfrac{1}{2g+2}\sim0$
that $u$ does not become $0$ or $\infty$ along the unit circle by Proposition \ref{prop:diagonal} and the proof of Proposition \ref{prop10}. %In fact, this can be easily deduced from the expansion in $s$ at $s=0$, i.e.,\eqref{eq:central} and \eqref{eq:centralder}. 
\begin{Rem}
It would be interesting to have an example where $u$ becomes $0$ or $\infty$ along the unit circle, since that would give a new additional closed CMC surface on a slightly different covering of $\CP^1$. 
\end{Rem}

Thus we only consider the case $u=1$, the other cases work similarly. Locally, near $x_2=x_3=0$, $x_1=\pm\sqrt{1-x_2^2-x_3^2},$ and we compute
\begin{equation}\label{consdu}
\tfrac{1}{2}rdu=\frac{ 2x_2(x_1-i x_2 x_3)}{x_1 \left(x_2^2-1\right)} dx_2+\frac{2 ix_2}{x_1}dx_3.\end{equation}

We normalize $d+\eta_{(x_1,x_2,x_3)}$ by conjugating with 
\[h=\left(
\begin{array}{cc}
 1 & i (x_1+x_3) \\
 1 & -i (x_1+x_3) \\
\end{array}
\right)\]
to obtain
\begin{equation}
\begin{split}
(d+\eta_{(x_1,x_2,x_3)}).h=d+&s \left(
\begin{array}{cc}
 x & 1-x^2 \\
 1 & -x \\
\end{array}
\right)\frac{dz}{z-p_1}
+s\left(
\begin{array}{cc}
 -x & \frac{x^2-1}{y} \\
 -y & x \\
\end{array}
\right) \frac{dz}{z-p_2} \\
+&s \left(
\begin{array}{cc}
 -x & \frac{1-x^2}{y} \\
 y & x \\
\end{array}
\right)\frac{dz}{z-p_3} 
+s  \left(
\begin{array}{cc}
 x & x^2-1 \\
 -1 & -x \\
\end{array}
\right)\frac{dz}{z-p_4} \end{split}\end{equation} with
\begin{equation}\label{eq:xyx1x2x3}x=x_2\quad \text{and}\quad y=\frac{x_1-i x_3}{x_1+i x_3}.\end{equation}
As in the proof of Lemma \ref{Lem:omus} one can show that in terms of $(x,y)$
\[\Omega=-\frac{1}{y}dx\wedge dy=-d(\frac{x}{y}dy).\]
Using \eqref{eq:xyx1x2x3} and $1=x_1^2+x_2^2+x_3^2$ we obtain
\[-\frac{x}{y}dy=-\frac{2 ix_2^2 x_3}{x_1 \left(x_2^2-1\right)} dx_2+\frac{2 ix_2}{x_1}dx_3.\]
Therefore, the two connection 1-forms $\tfrac{1}{2}sdu$ and $-\frac{x}{y}dy$ differ by
\[d\log(x_2^2-1),\]
which is smooth and well-defined near
\[(x_1,x_2,x_3)=\pm(1,0,0)\]
and has periods in $2\pi i\Z$ only.
 Hence, the two connection 1-forms $\tfrac{1}{2}rdu$ and $-\frac{x}{y}dy$ give rise to the same
 parallel transport on a neighborhood of $(x_1,x_2,x_3)=\pm(1,0,0).$ This computation shows that  $\mathcal L$ trivializes over $\mathcal U$ such that the connection 1-form
of $\mathcal D$ is $\tfrac{1}{2}rdu,$ which proves  the proposition.
\end{proof}
\begin{Rem}
It should be  noted that the line bundle $\mathcal L\to\mathcal C_1^s$ is non-trivial. In fact, investigating the behavior of $\Omega$ at $u=0,1,\infty$
in detail gives a way to compute the symplectic volume of the unitary locus, resulting in the very particular instance of Witten's formula \cite{Witten} for the 4-punctured sphere.
\end{Rem}

From Proposition \ref{Pro7} and \eqref{consdu} we then obtain
\begin{The}\label{thm6}
Let $f\colon \Sigma_{g+1}\to\mathbb S^3$ be a closed CMC surface determined by the  Fuchsian potential $d+\eta$,
where ${\bf x}=(x_1,x_2,x_3)$ with $x_j\colon \{\lambda\in\C\,\mid \,0<\lambda\bar\lambda<1+\epsilon\}\to\C$.
Then
\begin{equation*}
\Hol(\mathcal D,\gamma)=\exp \left[ \int_\gamma \left(-\frac{2x_2(x_1-i x_2x_3)}{x_1 (x_2^2-1)} dx_2-\frac{2 ix_2}{x_1}dx_3\right)\right]
\end{equation*}
where $\gamma\colon[\tau_1,\tau_2]\to\C^*,\tau\mapsto e^{i\tau}$.\end{The}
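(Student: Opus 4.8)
The plan is to obtain Theorem \ref{thm6} directly by combining the holonomy formula of Proposition \ref{Pro7} with the explicit coordinate expression \eqref{consdu} of the connection $1$-form $\tfrac{1}{2}r\,du$. The first step is to recognize the loop $\gamma\colon[\tau_1,\tau_2]\to\C^*$, $\tau\mapsto e^{i\tau}$, as a curve of the type treated in Proposition \ref{Pro7}. By the discussion following Lemma \ref{lem:4fQC1}, for $\lambda=e^{i\tau}$ on the unit circle the gauge class $[\nabla^\lambda]=[d+\eta^\lambda]$ lies in the equivariant component $\mathcal C_1^s$, and its canonical lift to the quadric $\mathcal Q$ of Lemma \ref{lem:4fQC1} is exactly $\tau\mapsto\big(x_1(e^{i\tau}),x_2(e^{i\tau}),x_3(e^{i\tau})\big)$. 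Hence $\gamma$ is a curve $(x_1,x_2,x_3)\colon[\tau_1,\tau_2]\to\mathcal Q$ to which Proposition \ref{Pro7} applies, yielding
\[\Hol(\mathcal D,\gamma)=\exp\left(-\tfrac{1}{2}\int_\gamma r\,du\right).\]

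Next I would substitute \eqref{consdu}. Since \eqref{eq:usx123} expresses $u$ and $r$ as rational functions of $(x_1,x_2,x_3)$, the form $\tfrac{1}{2}r\,du$ is rational on $\mathcal Q$, and eliminating $dx_1$ via the constraint $x_1\,dx_1+x_2\,dx_2+x_3\,dx_3=0$ turns it into the right-hand side of \eqref{consdu}. This is an algebraic identity of $1$-forms on the quadric, hence valid along all of $\gamma$ and not only near the endpoints where it was used in the proof of Proposition \ref{Pro7}. Inserting it gives
\[
\Hol(\mathcal D,\gamma)=\exp\left(\int_\gamma\Big(-\frac{2x_2(x_1-ix_2x_3)}{x_1(x_2^2-1)}\,dx_2-\frac{2ix_2}{x_1}\,dx_3\Big)\right),
\]
which is the asserted formula once the $x_j$ are read as functions of $\lambda$ along $\gamma$.

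Because the statement reduces to this substitution, there is no substantive obstacle beyond what was already resolved in Proposition \ref{Pro7}; the only point needing attention is a potential singularity of the integrand at the Sym-point endpoints, where $(x_1,x_2,x_3)=\pm(1,0,0)$, $u=1$, and the $(u,r)$-chart degenerates. This is precisely the smoothness assertion of Proposition \ref{Pro7}, established there by comparing $\tfrac{1}{2}r\,du$ with $-\tfrac{x}{y}\,dy$ and observing that they differ by $d\log(x_2^2-1)$, which is smooth near $\pm(1,0,0)$ and has periods in $2\pi i\Z$. Thus the pulled-back $1$-form is smooth on $[\tau_1,\tau_2]$, the integral is well-defined, and the formula follows.
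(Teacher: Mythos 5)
Your proposal is correct and follows essentially the same route as the paper, which derives Theorem \ref{thm6} in one line from Proposition \ref{Pro7} together with the coordinate expression \eqref{consdu}; your additional remarks (that \eqref{consdu} is an algebraic identity on the quadric valid along all of $\gamma$, and that smoothness at the Sym-point endpoints is exactly what Proposition \ref{Pro7} guarantees) correctly fill in the details the paper leaves implicit.
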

\begin{Rem}
The formulas in Theorem \ref{The:LeV} and Theorem \ref{thm6} differ because they are derived with respect to different
trivializations of $\mathcal L\to C^s_1.$ In particular, in the trivialization used for Theorem  \ref{The:LeV} the two end points of $\gamma= [\nabla^\lambda]$ at $\lambda_1$ and $\lambda_2$ are lifted to different connections, causing the boundary term $4\log[\tfrac{x_2'(\lambda_2)-i x_3'(\lambda_2)}{x_2'(\lambda_2)+i x_3'(\lambda_2)}]$ in \eqref{eq:LEV}. 
\end{Rem}

%%%%%%%%%%%%%%%%%%%%%%%%%%%%%%%%%%%%%%%%%%%%%%%%%%%%%%%%%%%%%%%%%%%%%%%%%%%%%%
%           Bibliography                                                     %
%%%%%%%%%%%%%%%%%%%%%%%%%%%%%%%%%%%%%%%%%%%%%%%%%%%%%%%%%%%%%%%%%%%%%%%%%%%%%%

\end{document}